\renewcommand{\p@enumii}{}
\newcommand{\titel}{Minors of non-hamiltonian polyhedra and the Herschel family}
\definecolor{hellblau}{rgb}{0.2,0.4,1} 
\definecolor{dunkelblau}{rgb}{0,0,0.8}
\definecolor{dunkelgruen}{rgb}{0,0.5,0}
\theoremstyle{plain}
\newtheorem{satz}{Satz}[section] % numbered by section
\newtheorem{theorem}[satz]{Theorem}
\newtheorem{lemma}[satz]{Lemma}
\newtheorem{corollary}[satz]{Corollary}
\newtheorem{question}[satz]{Question}
\newtheorem{claim}[satz]{Claim}
\theoremstyle{remark}
\theoremstyle{definition}
\newtheorem{conjecture}[satz]{Conjecture}
\begin{document}
	\title{\titel}
	\author{
		On-Hei Solomon Lo\thanks {School of Mathematical Sciences, Key Laboratory of Intelligent Computing and Applications (Tongji University), Ministry of Education, Tongji University, Shanghai 200092, China} \and
		Kenta Ozeki\thanks {Faculty of Environment and Information Sciences, Yokohama National University, 79-2 Tokiwadai, Hodogaya-ku, Yokohama 240-8501, Japan}\\
	}
	\date{}
	\maketitle
	
	\begin{abstract}
		We show that every non-hamiltonian polyhedron contains the Herschel graph as a minor, implying that the Herschel graph is the unique minor-minimal non-hamiltonian polyhedron. Our approach unifies many previously known results on minors of non-hamiltonian polyhedra, while strengthening them with significantly shorter, non-computer-assisted proofs. As an application, we characterize non-hamiltonian polyhedra with no $K_{2,6}$ minor, resolving a conjecture of Ellingham, Marshall, and Royle.\\
		
		\noindent{\bf Key words.} Polyhedral graph, hamiltonicity, graph minor.\\
		\noindent\textbf{MSC 2020.} 05C10, 05C45, 05C83.
	\end{abstract}
	
	\sloppy
	
	\section{Introduction}

	A graph is \emph{polyhedral} if it is the skeleton of a polyhedron. Steinitz's theorem characterizes polyhedral graphs as those that are planar and 3-connected.
	
	In 1856, Sir William Rowan Hamilton introduced the Icosian game, where the goal was to find a cycle in the dodecahedron that passes through every vertex exactly once. A cycle with this property in any graph is now called a \emph{Hamilton cycle}, and a graph is called \emph{hamiltonian} if it contains a Hamilton cycle. The study of Hamilton cycles has since become a central subject in graph theory.
	
	In 1956, Tutte~\cite{Tutte1956} proved that every polyhedral graph that cannot be disconnected by removing three vertices is hamiltonian. However, it is well known that there are infinitely many non-hamiltonian polyhedra, the smallest of which is the Herschel graph $\mathfrak{H}$, depicted in Figure~\ref{fig:Herschel}. This was first noted by Coxeter~\cite{Coxeter1948} in 1948 and later rigorously proven by Barnette and Jucovi\v{c}~\cite{Barnette1970} and Dillencourt~\cite{Dillencourt1996}, who showed that the Herschel graph is the minimal non-hamiltonian polyhedron in terms of both the number of vertices and edges. We note that it is straightforward to see that $\mathfrak{H}$ does not contain a Hamilton cycle, as it is bipartite and has an odd number of vertices.

	\begin{figure}[!ht]
		\centering
		\subfloat[A polyhedral representation.\label{subfig:HerschelPolyhedron}]{%
			\includegraphics[scale=.21]{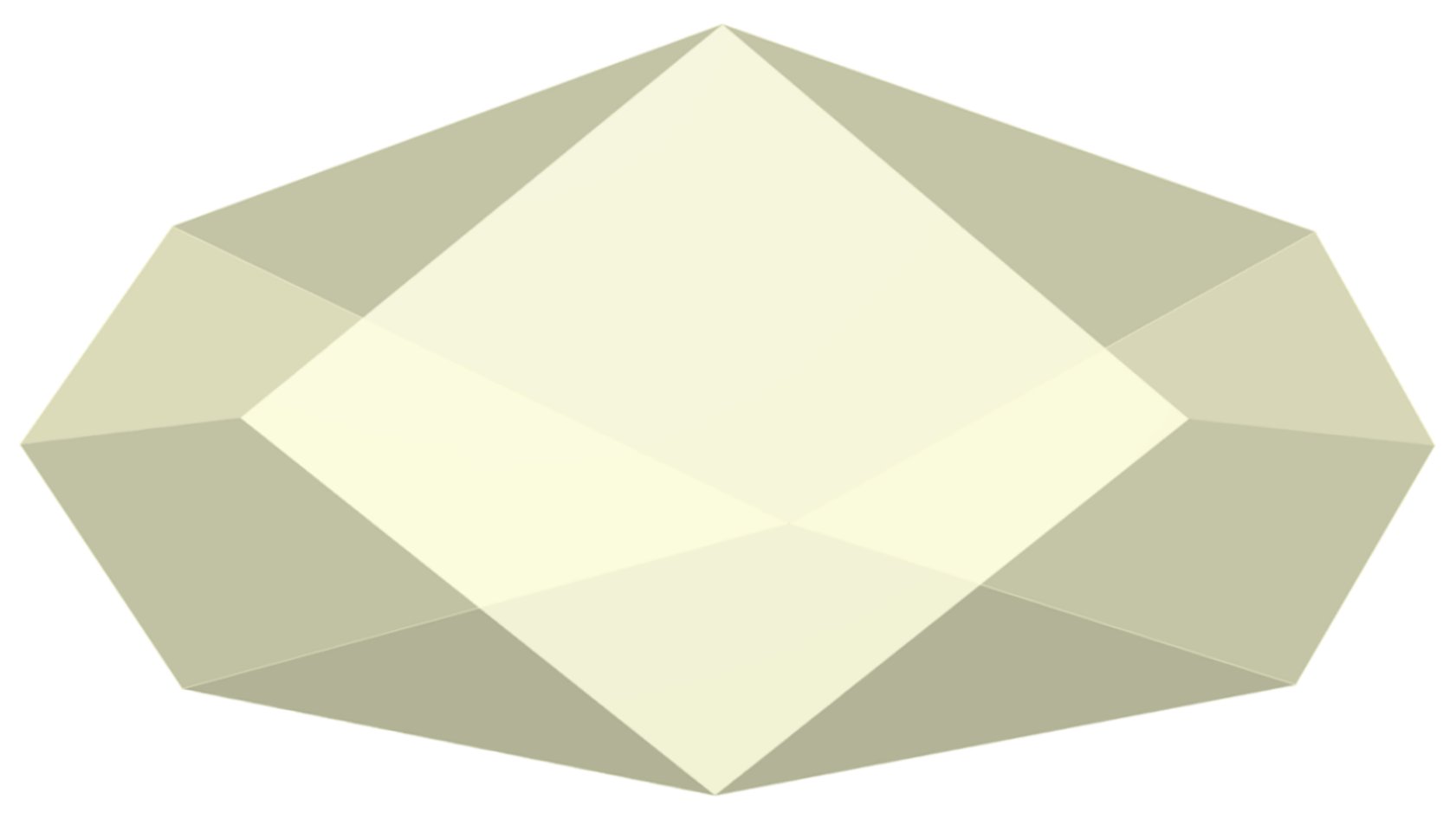}
		}
		\hspace{30pt}
		\subfloat[A graph representation.\label{subfig:Herschel}]{%
			\includegraphics[scale=1]{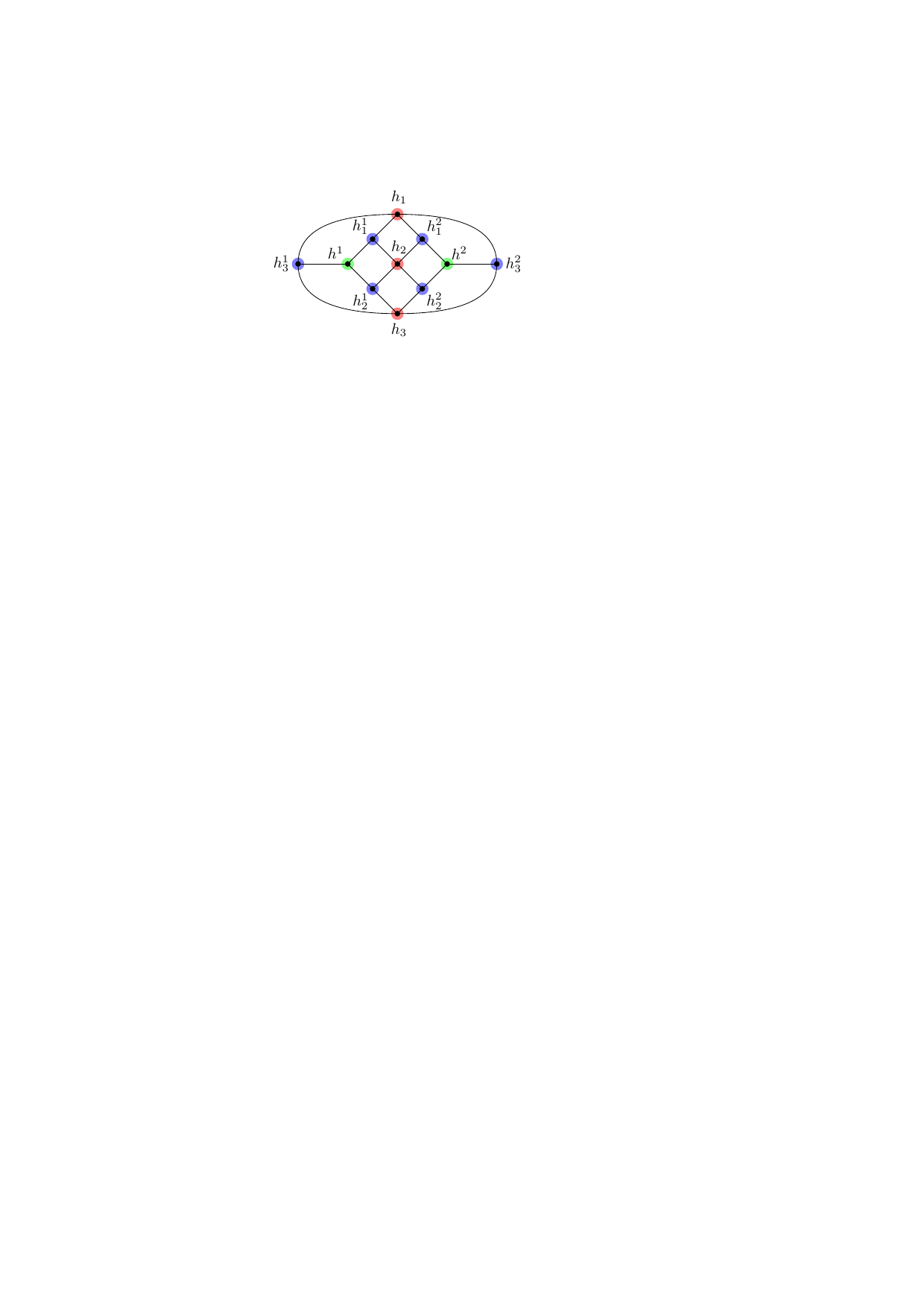}
		}
		\caption{The Herschel graph $\mathfrak{H}$.}
		\label{fig:Herschel}
	\end{figure}
	
	Recall that a graph $H$ is a \emph{minor} of a graph $G$ if $H$ can be obtained from $G$ by a sequence of edge deletions, vertex deletions, and edge contractions. The minor relation defines a partial order within any class of graphs. 
	
	A classical theorem of Wagner~\cite{Wagner1937} and Hall~\cite{Hall1943} states that a 3-connected graph is non-planar if and only if it is isomorphic to \(K_5\) or contains a \(K_{3,3}\) minor. Consequently, Tutte’s theorem is equivalent to the assertion that every 4-connected non-hamiltonian graph contains a \(K_{3,3}\) minor. This naturally raises the following question.
	
	\begin{question}\label{que}
		For a $3$-connected non-hamiltonian graph that contains no \(K_{3,3}\) minor, which other minor is forced to appear? Equivalently, which minor must every non-hamiltonian polyhedral graph contain?
	\end{question} 
	
	Motivated by Question~\ref{que}, Ellingham, Marshall, Ozeki, and Tsuchiya~\cite{Ellingham2019} proved the following result.
	
	\begin{theorem}[Ellingham, Marshall, Ozeki, and Tsuchiya~\cite{Ellingham2019}]\label{thm:K25}
		Every non-hamiltonian polyhedral graph contains a $K_{2,5}$ minor. 
	\end{theorem}
	
	They also observed that there are infinitely many non-hamiltonian polyhedral graphs that do not contain any $K_{2,6}$ minor. The following conjecture was discussed in~\cite{Marshall2014, Ellingham2019, Solava2019} and formulated in~\cite{O'Connell2018}.
	
	\begin{conjecture}[Ellingham, Marshall, and Royle (see~\cite{O'Connell2018, Solava2019})]\label{con:K26}
		Let $G$ be a non-hamiltonian polyhedral graph with at least $16$ vertices. If $G$ has no $K_{2,6}$ minor, then $G$ is isomorphic to the graph depicted in Figure~\ref{subfig:K26f1} or Figure~\ref{subfig:K26f2}, where each dashed edge may be present or absent.
	\end{conjecture}
	
	\begin{figure}[!ht]
		\centering
		\begin{tabular}{cc}
			\subfloat[The graph $\mathfrak{H}_n^\bullet$ on $n \ge 11$ vertices.\label{subfig:K26f1}]{
				\includegraphics[scale=1]{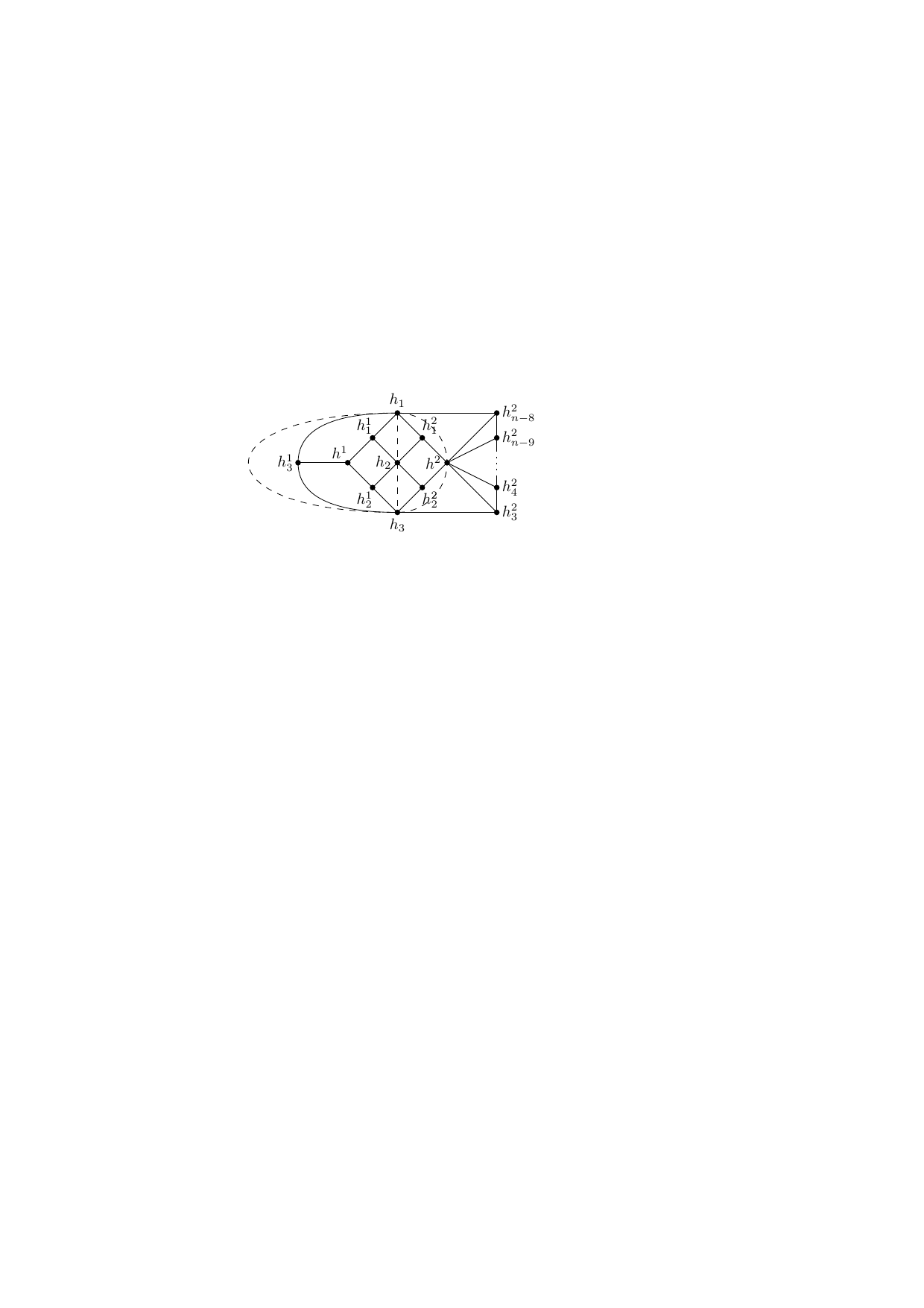}
			} & 
			\subfloat[The graph $\mathfrak{H}_n^\circ$ on $n \ge 13$ vertices.\label{subfig:K26f2}]{
				\includegraphics[scale=1]{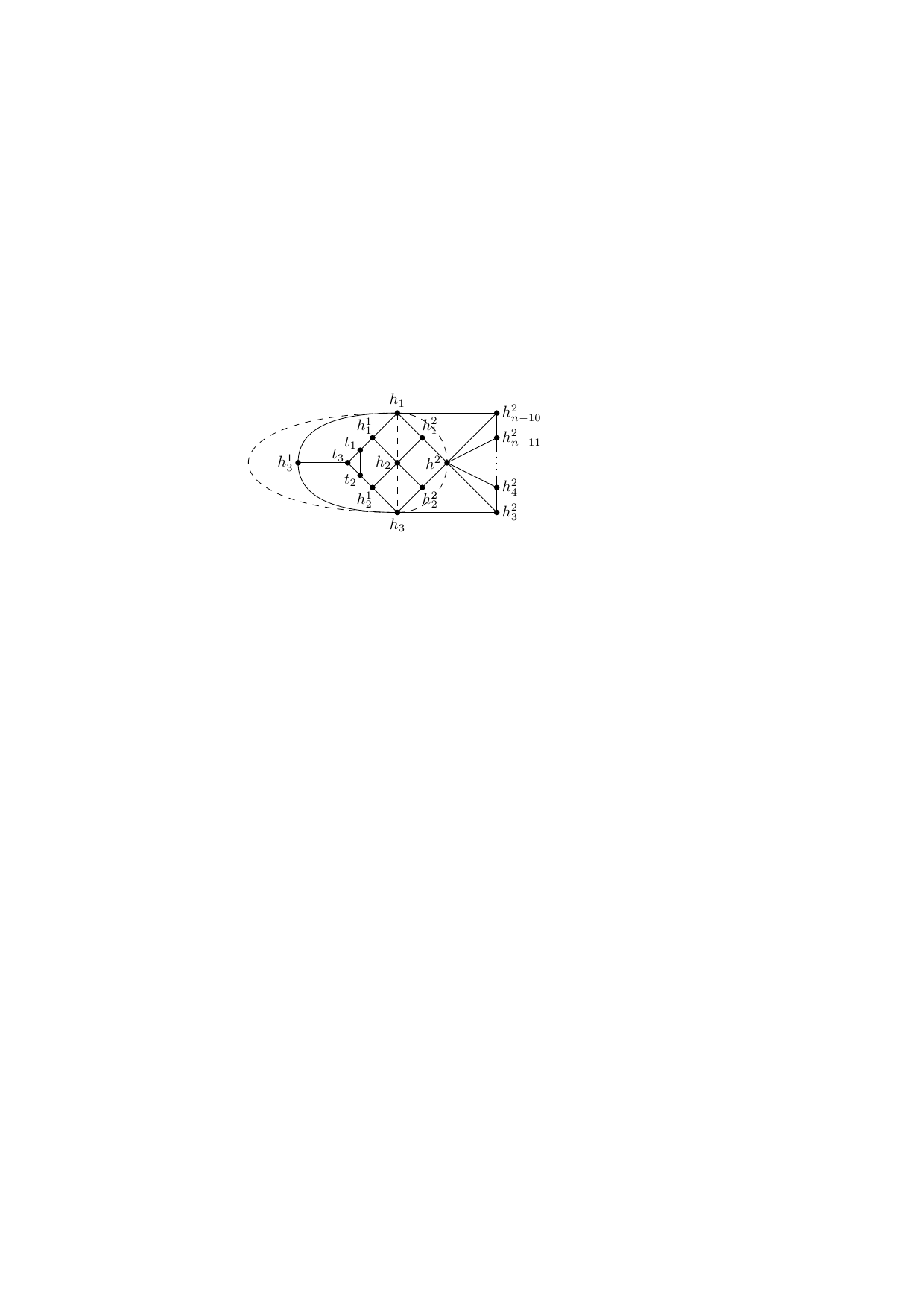}
			} \\
			\subfloat[The graph $\mathfrak{H}_{13}$.\label{subfig:H13}]{
				\includegraphics[scale=1]{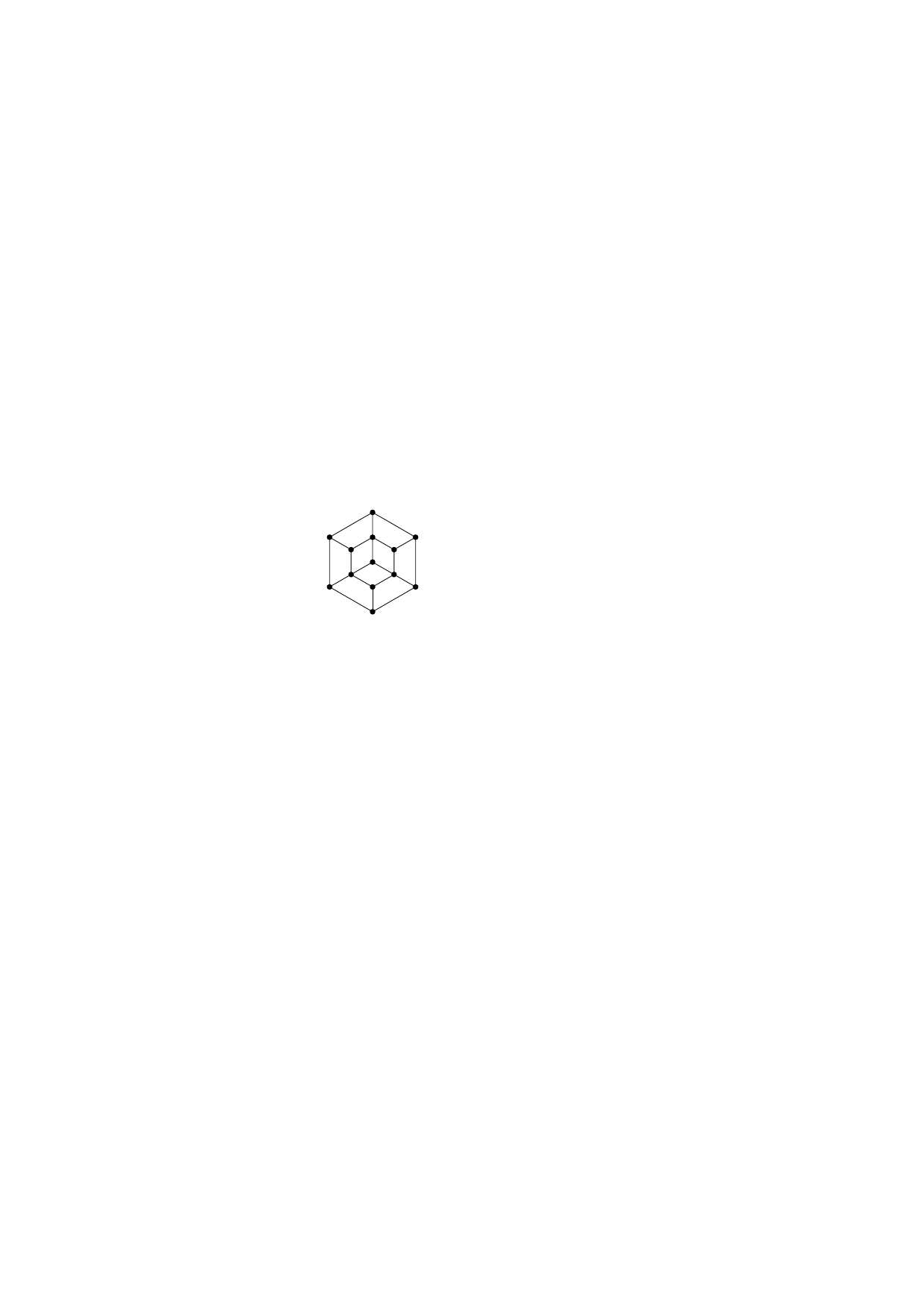}
			} &
			\subfloat[The graph $\mathfrak{H}_{15}$.\label{subfig:H15}]{
				\includegraphics[scale=1]{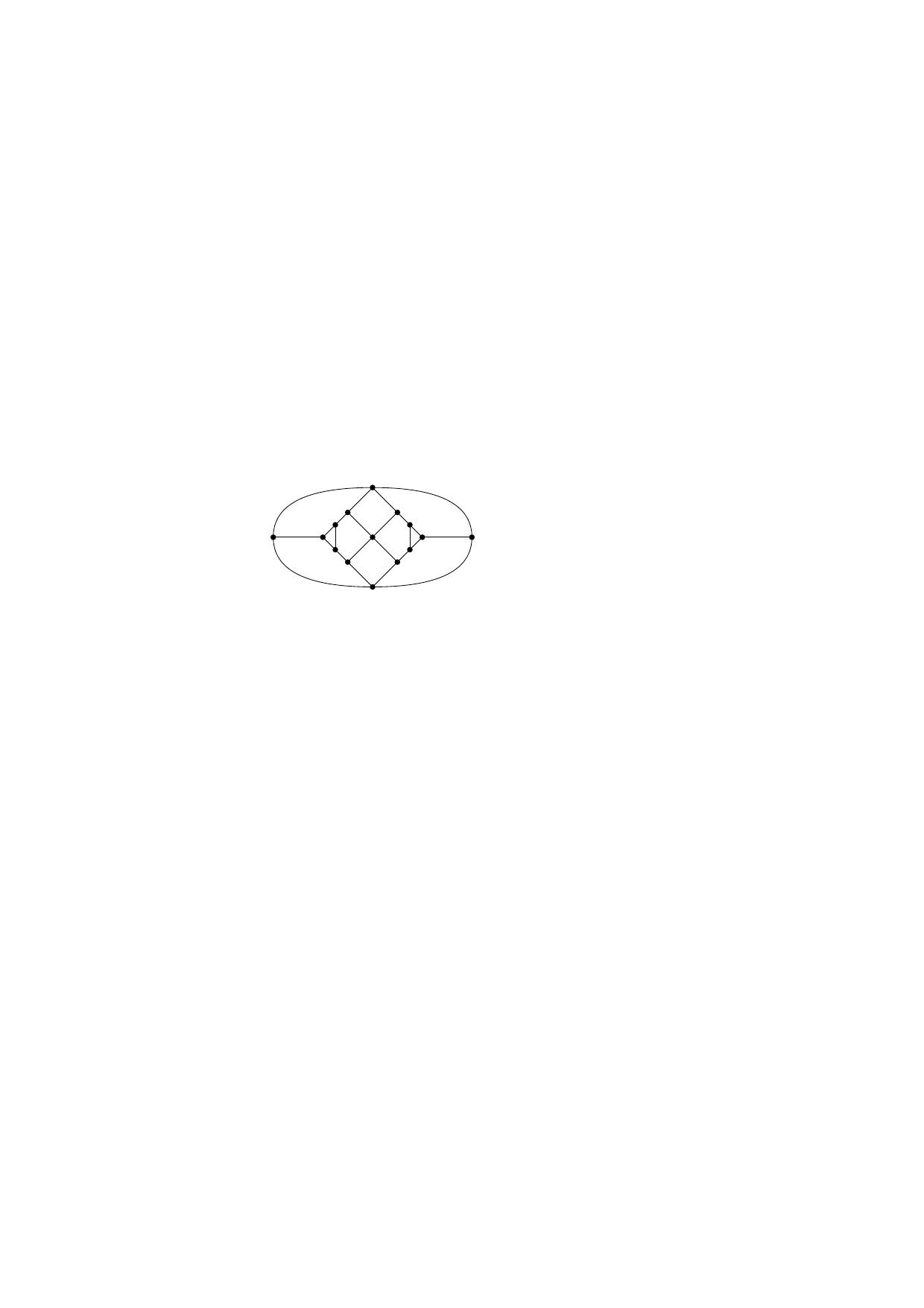}
			}
		\end{tabular}
		\caption{The Herschel family. In the drawing of \( \mathfrak{H}_n^\bullet \) (respectively, \( \mathfrak{H}_n^\circ \)), the rightmost vertical path degenerates into a single vertex when \( n = 11 \) (respectively, \( n = 13 \)).}
		\label{fig:Herschelfamily}
	\end{figure}
	
	The following result can be seen as an intermediate step toward Conjecture~\ref{con:K26}.
	
	\begin{theorem}[Ellingham, Gaslowitz, O'Connell, and Royle~\cite{O'Connell2018}]\label{thm:K115}
		Every non-hamiltonian polyhedral graph, other than the Herschel graph, contains a \( K_{1, 1, 5} \) minor.
	\end{theorem} 
	
	Despite these results, Question~\ref{que} is not yet fully resolved, since neither $K_{2,5}$ nor $K_{1,1,5}$ is 3-connected. To address this, Ding and Marshall~\cite{Ding2018} proposed a characterization of minimal minors for 3-connected non-hamiltonian graphs. Let $\mathfrak{Q}^+$ denote the graph obtained from the cube by adding a new vertex and connecting it to the three neighbors of a single original vertex.
	
	\begin{conjecture}[Ding and Marshall~\cite{Ding2018}] \label{con:DM}
		Every $3$-connected non-hamiltonian graph contains $K_{3, 4}$, $\mathfrak{Q}^+$, or $\mathfrak{H}$ as a minor.
	\end{conjecture}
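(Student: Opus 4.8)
The plan is to bootstrap from Theorem~\ref{thm:H}, which already settles the planar case, and to concentrate the entire difficulty in the non-planar regime. I begin with a unifying observation: each of $K_{3,4}$, $\mathfrak{Q}^+$, and $\mathfrak{H}$ is a $3$-connected bipartite graph of odd order ($7$, $9$, and $11$ vertices respectively), so each is non-hamiltonian for the same reason $\mathfrak{H}$ is---a cycle in a bipartite graph has even length. Moreover, since $K_{3,4}$ and $\mathfrak{Q}^+$ have fewer than $11$ vertices, neither can have $\mathfrak{H}$ as a minor; being $3$-connected and non-hamiltonian, Theorem~\ref{thm:H} then forces both of them to be non-planar. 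This explains the shape of the conjecture: $\mathfrak{H}$ is the sole planar obstruction, while $K_{3,4}$ and $\mathfrak{Q}^+$ are the two genuinely non-planar ones. Now let $G$ be a $3$-connected non-hamiltonian graph containing none of the three as a minor. If $G$ were planar it would be polyhedral, and Theorem~\ref{thm:H} would yield an $\mathfrak{H}$ minor, a contradiction; hence every counterexample is non-planar.

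It thus suffices to prove that \emph{every $3$-connected non-planar non-hamiltonian graph contains $K_{3,4}$ or $\mathfrak{Q}^+$ as a minor}. By Wagner's description of $K_{3,3}$-minor-free graphs as clique-sums of planar graphs and $K_5$ over cutsets of size at most two, the only $3$-connected non-planar graph with no $K_{3,3}$ minor is $K_5$ itself; since $K_5$ is hamiltonian, our $G$ has a $K_{3,3}$ minor. Fix a minor model with left branch sets $A_1,A_2,A_3$ and right branch sets $B_1,B_2,B_3$. The driving dichotomy is whether some connected component of the remainder of $G$, routed through the available edges, can serve as a fourth right branch set adjacent to all of $A_1,A_2,A_3$---yielding a $K_{3,4}$ minor---or can be organized into the cube-plus-apex pattern defining $\mathfrak{Q}^+$. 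If neither configuration can be produced, then $3$-connectivity constrains the attachments of the remaining vertices so tightly that the model can be re-embedded planarly, contradicting non-planarity; converting this ``tightness'' into an explicit Hamilton cycle (or into one of the two minors) is the crux.

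The main obstacle is exactly this last conversion: showing that a $3$-connected non-planar graph avoiding both $K_{3,4}$ and $\mathfrak{Q}^+$ as minors must be hamiltonian. The route I would pursue is through an excluded-minor structure theorem for the class of $K_{3,4}$-minor-free graphs, of Robertson--Seymour / Ding type, which decomposes such graphs, via clique-sums across cutsets of size at most three, into planar pieces together with a bounded list of exceptional pieces. Imposing $\mathfrak{Q}^+$-minor-freeness should prune the exceptional list severely, after which one assembles a Hamilton cycle by patching Tutte-type hamiltonian subpaths through the planar pieces across the small cutsets---an argument parallel to the patching behind Theorem~\ref{thm:H}, but carried along the tree of the decomposition rather than across a single planar embedding.

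The delicate heart of this program, and the step I expect to demand genuinely new ideas, is controlling the interface at the (at most three) gluing vertices of each clique-sum. One must guarantee that the local hamiltonian paths can be concatenated without creating the very parity deficit that makes all three obstructions non-hamiltonian; equivalently, one must rule out that the decomposition conceals a global bipartite-like imbalance not yet witnessed by a $K_{3,4}$ or $\mathfrak{Q}^+$ minor. Making this parity bookkeeping precise across an arbitrary clique-sum tree, while staying inside the minor-free hypotheses, is where the argument is least routine.
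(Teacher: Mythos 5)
You should be aware that the statement you set out to prove is a \emph{conjecture}: the paper does not prove it in full generality, and it remains open. What the paper establishes is Theorem~\ref{thm:H} (every non-hamiltonian \emph{polyhedral}, i.e.\ planar $3$-connected, graph has an $\mathfrak{H}$ minor), and it then observes that since $K_{3,4}$ and $\mathfrak{Q}^+$ are non-planar, this confirms the Ding--Marshall conjecture for polyhedral graphs only. Your reduction of the planar case to Theorem~\ref{thm:H} is exactly this observation, and your parity remarks (all three obstructions are $3$-connected bipartite of odd order) and your derivation of the non-planarity of $K_{3,4}$ and $\mathfrak{Q}^+$ from Theorem~\ref{thm:H} via the order bound are correct, if somewhat roundabout ($K_{3,4}\supseteq K_{3,3}$ and the uniqueness of the cube's embedding give these directly). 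Up to this point you agree with the paper. The Wagner step is also sound: a $3$-connected non-planar graph with no $K_{3,3}$ minor is $K_5$, which is hamiltonian, so any non-planar counterexample has a $K_{3,3}$ minor.

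Everything after that, however, is a research program rather than a proof, and this is a genuine gap --- indeed it is precisely the part of the conjecture that nobody, including the authors of the paper, has resolved. Concretely: (i) your ``driving dichotomy'' (either the $K_{3,3}$ model extends to a $K_{3,4}$ or $\mathfrak{Q}^+$ model, or else the attachments are so constrained that $G$ can be re-embedded planarly) is asserted without any argument; there is no reason given why failure to extend the model forces planarity, and this implication is where all the difficulty lives. (ii) The excluded-minor structure theorem for $K_{3,4}$-minor-free graphs that you invoke --- clique-sums over cutsets of size at most three of planar pieces and a bounded exceptional list --- is not an available tool in this form; you cannot lean on it as a black box. (iii) The final step, patching Tutte-type Hamilton paths across the decomposition tree while controlling parity at the gluing sets, is exactly the open problem, and you say yourself that it ``demands genuinely new ideas.'' A proof whose central step is deferred to hoped-for future ideas is not a proof. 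In short: your planar case matches the paper's (valid) partial result, but the non-planar case --- the actual content of the conjecture beyond Theorem~\ref{thm:H} --- is not established by your proposal.
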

	
	Since $K_{3, 4}$ and $\mathfrak{Q}^+$ are non-planar, Ding and Marshall's conjecture, if true, implies that every non-hamiltonian polyhedral graph must contain $\mathfrak{H}$ as a minor. They further proved the following supporting result.
	
	\begin{theorem}[Ding and Marshall~\cite{Ding2018}]\label{thm:Hp}
		Every non-hamiltonian plane triangulation contains $\mathfrak{H}$ as a minor.
	\end{theorem}
	
	Our main result establishes the planar case of Conjecture~\ref{con:DM}. That is, we show that the Herschel graph is the minimal non-hamiltonian polyhedral graph—not only with respect to order and size, but also with respect to the minor relation. This provides an optimal answer to Question~\ref{que}.
	
	\begin{theorem} \label{thm:H}
		Every non-hamiltonian polyhedral graph contains \(\mathfrak{H}\) as a minor.
	\end{theorem}
	
	Note that Theorem~\ref{thm:H} strengthens both Theorem~\ref{thm:Hp} and Theorem~\ref{thm:K25}, since $\mathfrak{H}$ contains $K_{2,5}$ as a minor.
	
	Let \( \mathfrak{H}^\bullet_n \), \( \mathfrak{H}^\circ_n \), \( \mathfrak{H}_{13} \), and \( \mathfrak{H}_{15} \) be the graphs shown in Figure~\ref{fig:Herschelfamily}, without any dashed edges. Note that $\mathfrak{H}^\bullet_{11} = \mathfrak{H}$. Define the \emph{Herschel family} to be the graph class
	\[
	\{\mathfrak{H}^\bullet_n : n \ge 11\} \cup \{\mathfrak{H}^\circ_n : n \ge 13\} \cup \{\mathfrak{H}_{13}, \mathfrak{H}_{15}\}.
	\]
	We apply Theorem~\ref{thm:H} to establish the following result.

	\begin{theorem} \label{thm:Hf}
		Every non-hamiltonian polyhedral graph without \( K_{2,6} \) minors contains a spanning subgraph from the Herschel family.
	\end{theorem}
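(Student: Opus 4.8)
The plan is to start from the structural machinery already available and then pin down the extremal families. Since Theorem~\ref{thm:H} guarantees that every non-hamiltonian polyhedron has an $\mathfrak{H}$-minor, and the $K_{1,1,5}$-minor result of Ellingham, Gaslowitz, O'Connell, and Royle applies to every such graph other than $\mathfrak{H}$ itself, I would first fix a non-hamiltonian polyhedral graph $G$ with no $K_{2,6}$ minor and seek to expose a highly structured ``core'' inside $G$. The guiding intuition is that forbidding $K_{2,6}$ severely limits how many internally disjoint paths can run between any two vertices: if $u,v$ are two vertices, then deleting them must not leave six components each attached to both $u$ and $v$, and more refined path-counting arguments of the Menger/fan type should bound the local branching. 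My first step is therefore to record a menagerie of $K_{2,6}$-forcing configurations — essentially, ``if $G$ contains such-and-such a pattern, then $G$ has a $K_{2,6}$ minor'' — which I can then use contrapositively to rule those patterns out.

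Next I would leverage the $K_{2,5}$ and $K_{1,1,5}$ minors that $G$ must contain. A $K_{2,5}$ minor gives two branch sets $A,B$ together with five connected branch sets $C_1,\dots,C_5$, each adjacent to both $A$ and $B$; the absence of a $K_{2,6}$ minor means these five ``rungs'' nearly exhaust the graph, in the sense that every remaining vertex must attach to the existing structure in a controlled way (otherwise it would furnish a sixth rung or an enlargement witnessing $K_{2,6}$). I expect to argue that $G$ is, up to the branch-set contraction, a planar ladder-like graph: two distinguished vertices (the images of $A$ and $B$, or a $K_{1,1}$ playing that role) together with a bounded number of internally disjoint paths joining regions of the plane, with the non-hamiltonicity forcing a specific parity/count obstruction à la the independent-set argument that kills the Herschel graph. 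Here I would use planarity heavily — fixing a plane embedding of $G$, tracking the cyclic rotation around the two hubs, and using the Jordan curve theorem to organize the rungs into a linear order.

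The technical heart, and the step I expect to be the main obstacle, is translating ``bounded branching plus planarity plus 3-connectivity'' into the precise claim that $G$ contains one of the four templates $\mathfrak{H}^\bullet_n$, $\mathfrak{H}^\circ_n$, $\mathfrak{H}_{13}$, $\mathfrak{H}_{15}$ as a \emph{spanning} subgraph. The spanning requirement is what makes this delicate: it is not enough to find the template as a minor or subgraph somewhere, I must account for \emph{every} vertex of $G$. My approach would be to identify the two hubs and the central ``long'' path(s) of the template first, then argue that 3-connectivity forces each remaining vertex to lie on one of a bounded number of threshold paths connecting the hubs to the spine, with the no-$K_{2,6}$ condition forcing these paths to degenerate into exactly the rung-structure of the Herschel family; the dashed edges in Figure~\ref{fig:Herschelfamily} correspond to the finitely many optional chords that do not create a $K_{2,6}$ minor. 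The case analysis splits according to whether the two hubs are adjacent, whether they share a common rung of length one (distinguishing the $\bullet$ from the $\circ$ family), and the small sporadic cases $\mathfrak{H}_{13}$, $\mathfrak{H}_{15}$ which I expect to arise as boundary configurations where the generic ladder argument has extra slack.

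Finally, I would verify the reverse consistency: each graph in the Herschel family is itself a non-hamiltonian polyhedron with no $K_{2,6}$ minor, confirming that the family is exactly the right target and that the structural reduction has not overshot. Throughout, the main risk is that the case analysis proliferates — each forbidden $K_{2,6}$-pattern prunes the search tree, but enumerating the surviving planar configurations up to the rotation system and checking that they all embed one of the four templates spanningly is likely to be long and will require careful bookkeeping of the attachment possibilities at the ends of the spine, which is precisely where the parameter $n$ and the optional dashed edges enter.
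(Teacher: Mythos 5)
Your proposal is a plan rather than a proof: the step you yourself flag as ``the main obstacle'' --- converting the structural information into a \emph{spanning} copy of one of the four templates --- is exactly where the content of the theorem lies, and you supply no mechanism for it. Your choice of structural anchor is also strategically off. The paper does not work from a $K_{2,5}$ minor with two hubs and five rungs; it inflates the full Herschel minor guaranteed by Theorem~\ref{thm:H} into a \emph{Herschel frame}: a subgraph $H \subseteq G$ realizing $\mathfrak{H}$ in which every branch set except possibly $\mu(h_1), \mu(h_2), \mu(h_3)$ is a single vertex (the exceptional ones carrying spanning paths, called splits), chosen to maximize first $|V(G) \setminus V(H)|$ and then the number of splits. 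Since each member of the Herschel family is essentially $\mathfrak{H}$ with some edges subdivided and at most one path inflated, this anchor is already within reach of the targets; starting from $K_{2,5}$, as you propose, would force you to rebuild all of the Herschel structure by hand, discarding the main advantage that Theorem~\ref{thm:H} provides.

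The second genuine gap is in how vertices outside the chosen structure are absorbed. You assert that a leftover vertex ``would furnish a sixth rung or an enlargement witnessing $K_{2,6}$'' --- that is false in general; many attachments create neither. The paper's actual mechanism uses the hypothesis of non-hamiltonicity as a weapon: in the case analysis (frame not spanning; two splits; an internal vertex of some path $\mu(e)$ attached in an unexpected place), one either exhibits one of a small catalogue of $K_{2,6}$-forcing configurations (this part does match your ``menagerie'' idea) or explicitly constructs a Hamilton cycle of $G$, contradicting the hypothesis. In particular, for a component $A$ of $G - V(H)$, one first shows $A$ attaches to exactly three frame vertices, then invokes the rooted-$K_{2,2}$ lemma of Ellingham et al.\ (\cite[Lemma~17]{Ellingham2016}): if $A$ together with two prescribed attachments has no spanning path between them, it contains a $K_{2,2}$ minor rooted at those attachments, which combined with the frame yields a $K_{2,6}$ minor; otherwise the spanning path is spliced into a Hamilton cycle. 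Your proposal has no counterpart to this Hamilton-cycle-construction step (you instead treat non-hamiltonicity as a parity obstruction to be established, rather than a given to be exploited), and without it the spanning requirement cannot be met.
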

	
	Finally, by applying Theorem~\ref{thm:Hf}, we are able to confirm Conjecture~\ref{con:K26}. 
	
	We also note that Theorem~\ref{thm:Hf} can provide a short derivation of Theorem~\ref{thm:K115}.

	While our results improve upon all previously mentioned works in this line of research—several of which involve computer assistance (see~\cite{Dillencourt1996, Ding2018, O'Connell2018})—our proofs, relying on a lemma by Ding and Marshall, adopt a considerably simpler approach and do not require computer assistance.
	
	In Section~\ref{sec:Herschel}, we prove Theorem~\ref{thm:H}. In Section~\ref{sec:K26}, we prove Theorem~\ref{thm:Hf} and establish Conjecture~\ref{con:K26}.

	\section{The minimal minor of non-hamiltonian polyhedra} \label{sec:Herschel}
	
	The main purpose of this section is to prove Theorem~\ref{thm:H}. Section~\ref{sec:pre} introduces the necessary definitions and develops some useful tools, while Section~\ref{sec:planar} presents the proof.
	
	\subsection{Preliminaries} \label{sec:pre}
	
Recall that a graph $H$ is a minor of a graph $G$ if it can be obtained from $G$ by a sequence of edge deletions, vertex deletions, and edge contractions. Thus, we may regard $E(H)$ as a subset of $E(G)$. It is well known that $G$ contains $H$ as a minor if and only if there exists a mapping $\mu$ which maps $V(H)$ into pairwise disjoint subsets of $V(G)$ and maps $E(H)$ into pairwise internally disjoint paths in $G$, such that for every $v \in V(H)$, the induced subgraph $G[\mu(v)]$ is connected, and for any $uv \in E(H)$, the path $\mu(uv)$ joins $\mu(u)$ and $\mu(v)$ with no internal vertex of $\mu(uv)$ in $\bigcup_{w \in V(H)} \mu(w)$.
We say that such a mapping $\mu$ realizes an $H$ minor in $G$.
	
	Let \( G \) be a graph, and let \( k \geq 0 \) be an integer. A \emph{\( k \)-cut} is a subset \( S \) of vertices such that \( |S| = k \) and \( G - S \) is disconnected. A graph \( G \) is said to be \emph{\( k \)-connected} if \( |V(G)| > k \) and \( G \) has no \( k' \)-cut for any \( k' < k \). Furthermore, a planar graph \( G \) is \emph{internally $4$-connected} if it is 3-connected, has at least five vertices, and every 3-cut of \( G \) is an independent set of vertices whose removal disconnects \( G \) into a single vertex and another component.
	
	A graph \( G \) is a \emph{minor-minimal} counterexample to a statement if \( G \) is a counterexample, but no proper minor of \( G \) serves as a counterexample. Ding and Marshall~\cite{Ding2018} showed that any minor-minimal counterexample to their conjecture is internally 4-connected. Since every minor-minimal counterexample to Theorem~\ref{thm:H} also serves as a minor-minimal counterexample to the Ding–Marshall conjecture, we conclude the following.

	\begin{lemma}[\cite{Ding2018}] \label{lem:i4c}
		Every minor-minimal counterexample to Theorem~\ref{thm:H} is internally $4$-connected.
	\end{lemma}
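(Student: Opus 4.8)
The plan is to reduce the statement to the structural result of Ding and Marshall by showing that any minor-minimal counterexample to Theorem~\ref{thm:H} is, in fact, a minor-minimal counterexample to Conjecture~\ref{con:DM}. Let $G$ be a minor-minimal counterexample to Theorem~\ref{thm:H}; thus $G$ is a non-hamiltonian polyhedral graph---planar and $3$-connected---that has no $\mathfrak{H}$ minor, and no proper minor of $G$ is again such a counterexample.

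First I would observe that $G$ is itself a counterexample to the Ding--Marshall conjecture. Since $G$ is polyhedral it is $3$-connected and non-hamiltonian, and since $K_{3,4}$ and $\mathfrak{Q}^+$ are non-planar while $G$ is planar, $G$ contains neither of them as a minor; by assumption it also has no $\mathfrak{H}$ minor. Hence $G$ avoids all three of $K_{3,4}$, $\mathfrak{Q}^+$, and $\mathfrak{H}$, so it is a $3$-connected non-hamiltonian graph witnessing a failure of Conjecture~\ref{con:DM}.

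Next I would verify minimality in the Ding--Marshall sense. Suppose some proper minor $H$ of $G$ were also a counterexample to the conjecture, i.e.\ $H$ is $3$-connected, non-hamiltonian, and avoids all three forbidden minors. The point I would exploit is that planarity is minor-closed: as $H$ is a minor of the planar graph $G$, it is planar, and being also $3$-connected it is polyhedral. Together with being non-hamiltonian and having no $\mathfrak{H}$ minor, this makes $H$ a counterexample to Theorem~\ref{thm:H} that is a proper minor of $G$, contradicting the minimality of $G$. Therefore no proper minor of $G$ is a counterexample to Conjecture~\ref{con:DM}, so $G$ is a minor-minimal counterexample to the Ding--Marshall conjecture.

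The conclusion then follows immediately from the cited result: Ding and Marshall proved that every minor-minimal counterexample to their conjecture is internally $4$-connected, whence $G$ is internally $4$-connected. The one point requiring care---the main, if modest, obstacle---is confirming that these two notions of minimality coincide, and this hinges entirely on the non-planarity of $K_{3,4}$ and $\mathfrak{Q}^+$ together with the minor-closedness of planarity; these ensure that narrowing attention from all $3$-connected graphs to polyhedral graphs neither destroys the counterexample property of $G$ nor manufactures a smaller counterexample below it.
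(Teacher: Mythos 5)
Your proposal is correct and follows exactly the paper's route: the paper simply asserts that a minor-minimal counterexample to Theorem~\ref{thm:H} is also a minor-minimal counterexample to the Ding--Marshall conjecture (using non-planarity of $K_{3,4}$ and $\mathfrak{Q}^+$ and the fact that planarity is minor-closed) and then invokes the cited result of \cite{Ding2018}. You have merely spelled out the details of that assertion, which the paper leaves implicit.
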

	
	%	The following lemma is given by Seymour~\cite{Seymour1980}. We denote by $G / e$ the graph obtained from $G$ by contracting an edge $e$.
	
	%	\begin{lemma}[\cite{Seymour1980}] \label{lem:Seymour}
	%		Let $e$ be an edge of a $3$-connected graph $G$ of order at least $5$. Then either $G / e$ is obtained from a $3$-connected graph by adding parallel edges or $G - e$ is obtained from a $3$-connected graph by subdividing edges.
	%	\end{lemma}
	
	From the above lemma, we derive two further lemmas that will be frequently used in the proof.
	
	\begin{lemma} \label{lem:na4}
		Every minor-minimal counterexample to Theorem~\ref{thm:H} contains no pair of adjacent vertices, each of degree at least $4$.
	\end{lemma}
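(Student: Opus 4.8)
The plan is to argue by contradiction: suppose $G$ is a minor-minimal counterexample to Theorem~\ref{thm:H} containing adjacent vertices $u$ and $v$ with $\deg u \ge 4$ and $\deg v \ge 4$, and to extract a contradiction from the structure forced by deleting the edge $e = uv$. The first step is to record what minor-minimality tells us about $G - e$. Since $G - e$ is a proper minor of $G$, it is planar and has no $\mathfrak{H}$ minor; moreover any Hamilton cycle of $G - e$ would also be a Hamilton cycle of $G$, contradicting that $G$ is non-hamiltonian, so $G - e$ is non-hamiltonian as well. Hence $G - e$ fails to be a counterexample only because it is not $3$-connected. As deleting a single edge from a $3$-connected graph leaves it $2$-connected, $G - e$ must have a $2$-cut.

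Next I would analyze this $2$-cut. Let $\{a, b\}$ be a $2$-cut of $G - e$. Because $G$ is $3$-connected, $\{a,b\}$ is not a cut of $G$, so adding $e$ back reconnects $(G-e) - \{a,b\}$; this forces $(G-e)-\{a,b\}$ to have exactly two components, with $u$ and $v$ lying in different ones. Writing $A$ for the component containing $u$ and $B$ for the component containing $v$, I obtain a partition $V(G) = A \cup B \cup \{a,b\}$ (disjointly) in which $e$ is the only edge between $A$ and $B$; in particular $a, b \notin \{u,v\}$, and both $A$ and $B$ are connected.

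The crux is then to promote this $2$-cut to a $3$-cut of $G$ and invoke Lemma~\ref{lem:i4c}. Since $\deg v \ge 4 > |\{u, a, b\}|$, vertex $v$ has a neighbor outside $\{u, a, b\}$, which must lie in $B$; hence $B \setminus \{v\} \neq \emptyset$, and $\{a, b, v\}$ is a $3$-cut of $G$ separating $A$ from $B \setminus \{v\}$. By internal $4$-connectivity (Lemma~\ref{lem:i4c}) this $3$-cut is independent and isolates a single vertex, so one of $A$ and $B \setminus \{v\}$ is a single vertex. If $A = \{u\}$, then every neighbor of $u$ lies in $\{v, a, b\}$ and $\deg u \le 3$; if $B \setminus \{v\} = \{w\}$, then by independence $v$ is nonadjacent to both $a$ and $b$, so every neighbor of $v$ lies in $\{u, w\}$ and $\deg v \le 2$. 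Either conclusion contradicts the degree hypotheses, completing the proof.

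I expect the main obstacle to be the bookkeeping in the second step—verifying that $(G-e)-\{a,b\}$ has exactly two components and that $u$ and $v$ are genuinely separated by them—since this is where $3$-connectivity of $G$ is converted into the clean separation $A \cup B \cup \{a,b\}$ with $e$ as the unique crossing edge. Note that independence of the $3$-cut, and not merely the singleton property, is essential in the second subcase: without it $v$ could be adjacent to $a$ and $b$, leaving $\deg v \le 4$ and no contradiction. Once these points are secured, the final degree count is immediate.
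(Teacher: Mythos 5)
Your proof is correct and takes essentially the same approach as the paper's: delete the edge $uv$, use minor-minimality to extract a $2$-cut of $G-uv$, augment it with one endpoint of the deleted edge to form a $3$-cut of $G$, and apply Lemma~\ref{lem:i4c} (independence plus the singleton component) against the degree hypotheses. The only cosmetic difference is the endgame—the paper fixes the singleton on $v$'s side and exhibits an extra edge into the far component, contradicting the $2$-cut, whereas you case-split on which side is the singleton and read off a degree contradiction directly.
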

	\begin{proof}
		Suppose, to the contrary, that the minor-minimal counterexample $G$ has two adjacent vertices $v, w$ of degree at least 4. By the choice of $G$, $G - vw$ is no longer 3-connected. Let $\{x, y\}$ be a 2-cut of $G - vw$. Clearly, $G - vw - \{x, y\}$ has precisely two components, one of which contains $v$ and the other contains $w$. 
		Since $v$ has degree at least 4, $\{v, x, y\}$ is a 3-cut of $G$ that separates $w$ from a neighbor $u$ of $v$. 
		Let $A_w$ and $A_u$ be the components of $G - \{v, x, y\}$ containing $w$ and $u$, respectively. 
		Since $w$ has degree at least 4, $A_w$ contains some neighbor of $w$ and hence $|V(A_w)| > 1$. 
		It follows from Lemma~\ref{lem:i4c} that $\{v, x, y\}$ is an independent set and that $A_u$ consists solely of the vertex $u$.
		This implies that $v$ has a neighbor in $A_w$ other than $w$. Hence $G - vw - \{x, y\}$ is connected, which is a contradiction.
	\end{proof}
	
	\begin{lemma} \label{lem:n3c}
		Every minor-minimal counterexample to Theorem~\ref{thm:H} has no cycle of length $3$.
	\end{lemma}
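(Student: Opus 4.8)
The plan is to argue by contradiction: suppose the minor-minimal counterexample $G$ contains a triangle $T$ with vertices $x, y, z$, and manufacture a proper minor of $G$ that is again a counterexample, contradicting minimality. The guiding observation is that any proper minor of $G$ automatically inherits the $\mathfrak{H}$-minor-free property, since a minor of a minor is a minor and $G$ has no $\mathfrak{H}$ minor. Hence, to produce a new counterexample it suffices to exhibit a proper minor of $G$ that is still polyhedral (3-connected planar) and non-hamiltonian.

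First I would pin down the degrees in $T$. As $G$ is 3-connected its minimum degree is at least $3$, while Lemma~\ref{lem:na4} forbids two adjacent vertices each of degree at least $4$. Since all three pairs in $T$ are adjacent, at most one of $x, y, z$ can have degree $\ge 4$; hence at least two, say $y$ and $z$, have degree exactly $3$. Writing the third neighbour of $y$ as $a$ and that of $z$ as $b$, a short argument using 3-connectivity together with $|V(G)| \ge 5$ shows that $a \ne b$ and that both differ from $x$ (otherwise $\{x, a\}$ would be a $2$-cut of $G$). The natural candidate is then $H := G / yz$, obtained by contracting the triangle edge $yz$ to a single vertex $w$; this $H$ is a simple planar graph with $\deg_H(w) = 3$ and neighbours $x, a, b$, and it is a proper minor because it has one fewer vertex than $G$.

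The two things to verify are that $H$ is non-hamiltonian and that $H$ is $3$-connected. Non-hamiltonicity is the easier half: a Hamilton cycle of $H$ uses exactly two of the three edges at $w$, and in each of the cases $\{wa, wb\}$, $\{wx, wa\}$, $\{wx, wb\}$ one can reinsert $y$ and $z$ along a short path (for instance $a$-$y$-$z$-$b$ in the first case, $x$-$z$-$y$-$a$ in the second) using edges already present in $G$, thereby lifting the cycle to a Hamilton cycle of $G$, a contradiction. I expect the $3$-connectivity of $H$ to be the main obstacle, and this is precisely where internal $4$-connectivity (Lemma~\ref{lem:i4c}) is needed. Any $2$-cut of $H$ avoiding $w$ would lift to a $2$-cut of $G$, which is impossible since contraction cannot disconnect vertices that $G$ keeps connected; and a $2$-cut $\{w, q\}$ of $H$ would lift to the set $\{y, z, q\}$ disconnecting $G$, i.e.\ a $3$-cut that fails to be independent because $y$ and $z$ are adjacent, contradicting the internal $4$-connectivity of $G$. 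With both properties established, $H$ is a polyhedral, non-hamiltonian, $\mathfrak{H}$-minor-free proper minor of $G$, contradicting the minor-minimality of $G$ and completing the argument.
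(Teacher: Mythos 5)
Your proof is correct, but it takes a genuinely different and considerably heavier route than the paper's. The paper disposes of a triangle in two lines: by Lemma~\ref{lem:na4} some vertex $u$ of the triangle has degree $3$, and the neighbourhood of $u$ is then a $3$-cut of $G$ (internal $4$-connectivity guarantees $|V(G)| \geq 5$) that contains the triangle edge opposite to $u$; this directly contradicts the independence of $3$-cuts required by Lemma~\ref{lem:i4c}, and no new counterexample is ever constructed. You instead argue from the definition of minor-minimality: you contract the triangle edge $yz$ between two degree-$3$ vertices, lift any Hamilton cycle of $G/yz$ back to $G$ (your three-case analysis at the contracted vertex is exhaustive, and each lift uses only edges of $G$), and rule out $2$-cuts of $G/yz$ by lifting them either to $2$-cuts of $G$ or to non-independent $3$-cuts of $G$, the latter excluded again by Lemma~\ref{lem:i4c}. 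Both arguments rest on exactly the same two lemmas, so your route assumes nothing weaker; what it costs is the extra machinery of contraction, cycle lifting, and connectivity verification. Structurally, your argument replays the reduction the paper itself uses to prove Lemma~\ref{lem:na4} (there for deleting an edge, here for contracting one), whereas the point of the paper's proof of Lemma~\ref{lem:n3c} is that, once internal $4$-connectivity is in hand, a triangle is an immediate violation: it supplies a non-independent $3$-cut all by itself.
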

	\begin{proof}
		Suppose, for contradiction, that the minor-minimal counterexample \( G \) contains a 3-cycle \( u v w u \). By Lemma~\ref{lem:na4}, we may assume that \( u \) has degree 3. Let \( u' \) be the neighbor of \( u \) other than \( v \) and \( w \). Then \( \{u', v, w\} \) forms a 3-cut, but it is not an independent set, contradicting Lemma~\ref{lem:i4c}.  
		%		
		%	Suppose, to the contrary, that the minor-minimal counterexample $G$ has a cycle $u v w u$ of length 3. By Lemma~\ref{lem:na4}, we may assume that $u$ has degree 3. Clearly, $G$ has order at least 5 (as it is non-hamiltonian). Therefore, we apply Lemma~\ref{lem:Seymour} to conclude that $G - vw$ is a subdivision of a 3-connected graph $H$. By the choice of $G$, $H$ is hamiltonian.
		%		
		%		Let $C_H$ be a Hamilton cycle of $H$. It is obvious that $C_H$ corresponds to a cycle $C_G$ in $G - vw$ containing all vertices of degree at least 3. Since $G$ is not hamiltonian, $C_G$ misses some vertex of degree 2 in $G - vw$. Without loss of generality, let $v$ be the vertex not in $C_G$. Since $u$ has degree 3, we have that $C_G$ contains the edge $uw$. So we obtain a Hamilton cycle of $G$ from $C_G$ by deleting $uw$ and adding $uvw$, which is a contradiction.
	\end{proof}

	Note that the proofs of Lemma~\ref{lem:na4} and Lemma~\ref{lem:n3c} do not rely on planarity and thus also apply to minor-minimal counterexamples to the conjecture by Ding and Marshall.

	\subsection{Proof of Theorem~\ref{thm:H}} \label{sec:planar}
	
	Throughout this section, we assume that $G$ is a minor-minimal counterexample to Theorem~\ref{thm:H}. We will derive a contradiction by showing that $G$ contains $\mathfrak{H}$ as a minor.
	
	Let $C$ be a cycle embedded in the plane, and let $u, v \in V(C)$. Define $[u, v]_C$ as the path in $C$ with end-vertices $u$ and $v$, such that it proceeds forward from $u$ to $v$ in a clockwise direction. With a slight abuse of notation, we may not distinguish between $[u, v]_C$ and $V([u, v]_C)$. We also define $(u, v)_C := [u, v]_C - \{u, v\}$. Note that $(u, v)_C$ may be empty. We omit the subscripts when it causes no ambiguity.
	
	Let \( O \) be a longest facial cycle of \( G \). We embed \( G \) such that \( O \) forms the boundary of the unbounded face. Define \( H := G - V(O) \). Since \( O \) is a facial cycle, it is an induced cycle in \( G \). As $G$ is 3-connected, every vertex of \( O \) has a neighbor in \( H \).

	\begin{claim} \label{cla:2connected}
		$H$ is $2$-connected.
	\end{claim}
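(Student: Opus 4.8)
The plan is to assume, for contradiction, that $H$ is not $2$-connected and to exploit the planar embedding together with Lemma~\ref{lem:i4c}. First I record two easy facts. Since $G$ is a counterexample it is non-hamiltonian, so $V(O)\neq V(G)$ (otherwise $O$ would be a spanning, hence Hamilton, cycle) and thus $H\neq\emptyset$; and, as already noted, every vertex of $O$ has a neighbour in $H$. Because $O$ is induced and bounds the outer face, the components of $H$ are precisely the bridges of the cycle $O$, all drawn inside the disk bounded by $O$, so any two of them attach to $O$ in a non-crossing fashion. A failure of $2$-connectivity then falls into three cases: $H$ is disconnected, $H$ is connected but has a cutvertex, or $|V(H)|\le 2$.

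If $H$ is disconnected, I would take two components and use non-crossingness to place one of them, together with the intervening part of $O$, inside a \emph{pocket} bounded by two consecutive attachments $s,s'$ of the other component. Every vertex strictly inside that arc of $O$ has its $H$-neighbours confined to the pocket (it cannot attach to the outer component, lest it be an attachment lying between two consecutive ones), so $\{s,s'\}$ separates the pocket from the rest of $G$. This is a $2$-cut, contradicting $3$-connectivity.

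The heart of the argument is the cutvertex case. Let $c$ be a cutvertex of $H$ and $A$ a component of $H-c$, with $O$-attachment set $S=N(A)\cap V(O)$. A planar bridge/pocket analysis around $A$ shows that $S$ is an \emph{arc} of $O$: the other components of $H-c$ and the rest of $O$ are reachable from $A$ only through $c$ and hence lie on one side of $A$, so any two attachments consecutive along $A$ but not flanking $c$ bound an empty arc of $O$, i.e.\ are adjacent on $O$. Writing $S=\{s_1,\dots,s_m\}$ in order along this arc, I then study the separator consisting of $c$ and the arc's ends. If $m=2$, the arc structure gives $s_1\sim s_2$, whereas Lemma~\ref{lem:i4c} forces the $3$-cut $\{c,s_1,s_2\}$ to be independent; contradiction. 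If $m\ge 3$, one checks that the interior attachments $s_2,\dots,s_{m-1}$ have all their neighbours in $A$ or on the arc, so $\{c,s_1,s_m\}$ is a $3$-cut separating $A$ together with $s_2,\dots,s_{m-1}$ (at least two vertices) from the remainder. Lemma~\ref{lem:i4c} then forces the opposite side to be a single vertex; but since $H-c$ has another component, this forces the complementary arc of $O$ to be empty, whence $s_1\sim s_m$, again contradicting the independence of the $3$-cut. Finally, the degenerate possibilities $|V(H)|\le 2$ are dispatched directly, since a vertex of $H$ with few neighbours on $O$ produces either a degree-$2$ vertex or a $3$-cut violating Lemma~\ref{lem:i4c}.

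I expect the main obstacle to be the cutvertex case, specifically the planar bookkeeping needed to establish that each component's attachment set is an arc and to verify that the candidate separators are genuine cuts; once the arc structure is available, internal $4$-connectivity does the rest through its independence and single-vertex-isolation conditions. I note that this argument uses only $3$-connectivity, planarity, and Lemma~\ref{lem:i4c}, and not the maximality of $O$, which I anticipate will be needed only later when the $\mathfrak{H}$-minor is assembled.
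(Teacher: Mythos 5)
Your proposal is correct and follows essentially the same route as the paper: the paper likewise dismisses connectedness as easy, and in the cut-vertex case it also produces a $3$-cut consisting of the cut-vertex and two vertices $u_1,u_2$ of $O$, invokes Lemma~\ref{lem:i4c} to force one side to be a single vertex, and concludes that $u_1,u_2$ are adjacent on $O$, contradicting the independence required by internal $4$-connectivity. The only cosmetic difference is that the paper secures $|V(H)|\ge 3$ by observing (via Lemmas~\ref{lem:i4c} and~\ref{lem:n3c}) that no two vertices of $O$ share a neighbour in $H$, whereas you dispatch the cases $|V(H)|\le 2$ by direct degenerate-case analysis.
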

	\begin{proof}
		It is easy to show that \( H \) is connected.
		
		If there are two distinct vertices $v_1, v_2$ of $O$ that have a common neighbor $u$ in $H$, then, by Lemma~\ref{lem:n3c}, $v_1$ and $v_2$ are non-adjacent in $G$. Thus, $\{v_1, v_2, u\}$ is a 3-cut of $G$, which contradicts the fact in Lemma~\ref{lem:i4c} that every 3-cut of $G$ must be an independent set.
		Hence, any two distinct vertices of $O$ have no common neighbor in $H$, and $H$ has at least three vertices.
		
		Suppose $H$ has a cut-vertex $v$. Then there exist $u_1, u_2 \in V(O)$ such that $G - \{v, u_1, u_2\}$ has precisely two components, each of which contains at least one vertex of $H$. By Lemma~\ref{lem:i4c}, one of these components consists of a single vertex, implying that one of $(u_1, u_2)_O$ or $(u_2, u_1)_O$ is empty. Thus, $u_1$ and $u_2$ are adjacent in $O$. This means that $\{v, u_1, u_2\}$ is not an independent set, contradicting Lemma~\ref{lem:i4c}.	\end{proof}
	
	We consider the induced embedding of $H$ and let $C$ be the facial cycle of the unbounded face of $H$. Every vertex of $O$ has a neighbor in $C$ as $G$ is 3-connected and $O$ is an induced cycle. 
	
	Suppose that two vertices $v_1,v_2 \in V(O)$ share a common neighbor $u \in V(C)$. By planarity, either every vertex on $[v_1,v_2]_O$ or every vertex on $[v_2,v_1]_O$ is adjacent to $u$. Hence, some pair of adjacent vertices of $O$, together with $u$, induces a $3$-cycle in $G$, contradicting Lemma~\ref{lem:n3c}. Therefore, no two distinct vertices of $O$ share a common neighbor in $C$. We shall use this property hereafter without further mention.
	\begin{claim} \label{cla:nonemptyinterior}
		There exists some vertex in the interior of $C$.
	\end{claim}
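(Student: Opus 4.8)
The plan is to argue by contradiction: suppose the interior of $C$ is empty, so that $V(G)=V(O)\cup V(C)$ and every vertex of $H$ lies on $C$. Since $H$ is $2$-connected (Claim~\ref{cla:2connected}) and all of its vertices lie on the boundary $C$ of its unbounded face, $H$ is outerplanar with $C$ as a Hamilton cycle, possibly together with some chords drawn inside $C$. Thus $G$ has a very restricted \emph{cylindrical} shape: an outer cycle $O$, an inner cycle $C$, a family of pairwise non-crossing \emph{rungs} joining $O$ to $C$ (recall that every vertex of $O$ has a neighbor in $C$ and that distinct vertices of $O$ have disjoint neighborhoods in $C$), and the chords of $C$. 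As $G$ is a counterexample to Theorem~\ref{thm:H}, it is in particular non-hamiltonian, so I would derive the contradiction by exhibiting a Hamilton cycle of $G$.

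The Hamilton cycle I would build uses exactly two rungs. Fix an edge $o_i o_{i+1}$ of $O$; since every vertex of $O$ carries a rung, both endpoints do, say $o_i c$ and $o_{i+1}c'$ with $c,c'\in V(C)$. If the inner graph $H$ admits a Hamilton path $P$ from $c$ to $c'$, then $(O-o_io_{i+1})$ together with the two rungs $o_ic,\ o_{i+1}c'$ and the path $P$ forms a Hamilton cycle of $G$, the desired contradiction. When $G$ has no chords this is immediate: then the unique face inside $C$ has length $|V(C)|\ge|V(O)|$, so by maximality of $O$ we get $|V(C)|=|V(O)|$, the rungs form a perfect matching, $G$ is the prism over the cycle $O$, and $P=C-cc'$ works for every edge $o_io_{i+1}$.

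The main obstacle is to produce the inner Hamilton path $P$ in the presence of chords and of \emph{unmatched} vertices of $C$ (those incident to no rung). By $3$-connectivity every unmatched vertex has degree at least $3$ and hence is incident to a chord, so the chords are exactly what must be used to absorb blocks of unmatched vertices into $P$. To control these blocks I would use the maximality of $O$: every bounded face has length at most $|V(O)|$, and because the $O$-endpoints of two consecutive rungs are equal or adjacent, the face between two consecutive rungs has an $O$-side of length at most one, which bounds the number of unmatched vertices that can occur consecutively on $C$ by $|V(O)|-3$. Combining this with triangle-freeness (Lemma~\ref{lem:n3c}) and Lemma~\ref{lem:na4}, I expect to be able to choose the edge $o_io_{i+1}$ and route $P$ through the relevant chords so that all unmatched vertices are covered. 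Should some configuration resist this routing, the fallback is to extract a copy of $\mathfrak{H}$ as a minor directly from that configuration, which again contradicts the choice of $G$; pinning down a clean, uniform version of the routing argument is the crux of the proof.
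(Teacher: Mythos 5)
Your proposal sets up the right picture (the annular structure of $G$, the swap construction ``$O$ minus an edge, plus two rungs, plus a spanning path of $H$''), and you even isolate the key planarity fact that the paper uses implicitly: since every vertex of $O$ carries a rung, the $O$-side of any face lying between two consecutive rungs has length at most one. But the proof is not complete, and you say so yourself: the routing of the inner Hamilton path $P$ through the chords, or alternatively the fallback extraction of an $\mathfrak{H}$ minor, is exactly the part you leave open (``I expect to be able to \dots pinning down a clean, uniform version of the routing argument is the crux''). This is a genuine gap, and it is a self-inflicted one. By fixing the edge $o_io_{i+1}$ of $O$ \emph{first}, you force yourself to find a Hamilton path of $H$ between the two prescribed rung-feet $c,c'$, which may be far apart on $C$; since $H$ is outerplanar with all vertices on $C$, a Hamilton path between two prescribed boundary vertices need not exist at all (for a chordless stretch of $C$ it exists only when the endpoints are adjacent), so for a badly chosen $O$-edge the routing genuinely fails, and your proposal gives no criterion for choosing a good one and no actual argument for the fallback.

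The paper's proof closes this gap by reversing the order of choice: it picks the edge of $C$ first, and in such a way that no chords are ever needed. Since the interior of $C$ is empty, $V(H)=V(C)$, so \emph{any} edge $v_1v_2$ of $C$ gives a spanning path $C-v_1v_2$ of $H$; the only issue is to pick $v_1v_2$ so that $v_1,v_2$ both have rungs and their $O$-feet are adjacent. For this, take a face in the interior of $C$ whose boundary uses at most one edge not in $C$ (a ``leaf'' of the chord structure); by Lemma~\ref{lem:n3c} it has length at least $4$, so it contains two adjacent vertices $v_1,v_2$ of $C$ of degree $2$ in $H$. Each of $v_1,v_2$ then has a rung (as $G$ is $3$-connected), their $O$-neighbors $u_1,u_2$ are distinct by Lemma~\ref{lem:n3c}, and by your own observation about the $O$-side of the face between consecutive rungs, $u_1u_2\in E(O)$; hence $(O-u_1u_2)\cup(C-v_1v_2)$ together with $v_1u_1$ and $v_2u_2$ is a Hamilton cycle of $G$, a contradiction. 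So the missing idea is not a stronger routing lemma but the observation that degree-$2$ vertices of $H$ on a leaf face let you use $C$ itself as the inner path, making the chords, the unmatched vertices, and the minor-extraction fallback all irrelevant.
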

	\begin{proof}
		Suppose to the contrary that the interior of $C$ contains no vertex. Then there exists some face in the interior of $C$ such that the corresponding facial cycle has at most one edge which is not in $C$. Note that, by Lemma~\ref{lem:n3c}, that facial cycle has length at least 4. Therefore, there exist two adjacent vertices $v_1, v_2$ each having degree 2 in $H$. By Lemma~\ref{lem:n3c}, $v_1$ and $v_2$ have no common neighbor in $G$. Thus, there exist $u_1, u_2 \in V(O)$ such that $v_1 v_2 u_2 u_1 v_1$ is a facial cycle of length 4. This implies that the union of $O - u_1u_2$, $C - v_1v_2$, $v_1u_1$, and $v_2u_2$ forms a Hamilton cycle of $G$, a contradiction.
	\end{proof}
	
	Let $A$ be a component of $H - V(C)$. Let $B$ be the union of $A$ and the edges joining $A$ to $C$ (and the end-vertices of these edges). The vertices of $B$ in $C$ are called the \emph{attachments} of $B$. As $G$ is 3-connected, $B$ has at least 3 attachments. Denote by $v_1, v_2, \dots, v_r$ the attachments of $B$ that occur in $C$ in this clockwise order. Write $v_{r+1} := v_1$. We call the paths $(v_1, v_2)_C$, $(v_2, v_3)_C$, \dots, $(v_r, v_{r+1})_C$ \emph{sectors} (with respect to $B$). 
	For any non-empty sector $(v_i, v_{i+1})$ ($i \in \{1, \dots, r\}$), $\{v_i, v_{i+1}\}$ is a 2-cut of $H$ that separates $(v_i, v_{i+1})$ from $(v_{i+1}, v_i)$. Therefore, since $G$ is 3-connected, any non-empty sector contains some vertex joining to $O$.
	
	The following two lemmas are useful for showing the existence of an \( \mathfrak{H} \) minor. The vertex labeling for \( \mathfrak{H} \) is given in Figure~\ref{subfig:Herschel}.

	\begin{lemma} \label{lem}
		Let $w_1, w_2, w_3$ be three attachments of $B$ that occur in this clockwise order in $C$. If $|V(O)| \ge 5$ and there exist $u_1, u_2, u_3 \in V(O)$ such that $u_1, u_2, u_3$ each have a neighbor in $(w_1, w_2)_C$, $(w_2, w_3)_C$, and $(w_3, w_1)_C$ respectively, then $G$ contains an $\mathfrak{H}$ minor.
	\end{lemma}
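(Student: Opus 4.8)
The plan is to exhibit an explicit minor model of $\mathfrak{H}$, that is, eleven pairwise disjoint connected branch sets in $G$ together with eighteen internally disjoint connecting paths realizing the eighteen edges of $\mathfrak{H}$. It helps to first recall the shape of $\mathfrak{H}$ (with the labeling of Figure~\ref{subfig:Herschel}): it has three vertices of degree $4$ (call them \emph{rails}), two vertices of degree $3$ (call them \emph{apexes}), and six vertices of degree $3$ (call them \emph{connectors}); each apex is joined to three connectors, and each connector joins its apex to two consecutive rails. Thus $\mathfrak{H}$ is two ``gears'' — an apex with its three connectors — glued along a common set of three rails, the rails being mutually non-adjacent. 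The strategy is to build one gear from outside $C$ and the other from inside $C$, using the three attachments $w_1,w_2,w_3$ as the three rails.

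First I would construct the outer gear, which the hypotheses make routine. Contract the whole outer cycle $O$ to a single vertex $b$ (connected, and disjoint from $C$); this is one apex. Contract each sector $(w_i,w_{i+1})_C$ to a single vertex $d_i$. Since each sector is a nonempty interior arc of $C$ whose two ends are adjacent on $C$ to $w_i$ and to $w_{i+1}$, the vertex $d_i$ is adjacent to both rails $w_i,w_{i+1}$; and by assumption $(w_i,w_{i+1})_C$ contains a neighbor of $u_i\in V(O)$, so $d_i$ is also adjacent to $b$. Hence $b$ together with $d_1,d_2,d_3$ and the rails $w_1,w_2,w_3$ realizes the outer gear, each $d_i$ being joined (by a single edge) to $b$ and to the two rails flanking it; any superfluous edges from $b$ or the $d_i$ to the rails, produced by further edges between $O$ and $C$, are simply deleted.

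It then remains to produce the inner apex $a$ and three inner connectors $c_1,c_2,c_3$, with $c_i$ adjacent to $a$ and to the two rails $w_i,w_{i+1}$, using only the bridge $B$ together with the attachments; this gives each rail its fourth neighbor and completes $\mathfrak{H}$. Equivalently, I must find inside $A\cup\{w_1,w_2,w_3\}$ a hub $a$ joined by three disjoint branch sets $c_1,c_2,c_3$ to the consecutive pairs of attachments. This is the step I expect to be the main obstacle: a connected $A$ attached at $w_1,w_2,w_3$ need not by itself contain such a configuration — for example if $A$ is a single vertex adjacent to $w_1,w_2,w_3$. Here I would bring in internal $4$-connectivity (Lemma~\ref{lem:i4c}) together with the two facts established above, that $|V(O)|\ge 5$ and that every vertex of $O$ has a \emph{private} neighbor on $C$. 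The surplus vertices of $O$ forced by $|V(O)|\ge5$ land, through the annulus, on further vertices of $C$; these extra landings can be used either to supply the missing inner connector, or (in the degenerate cases where the hub is too small) to re-assign the roles of apex and rail among the attachments and the vertices of $O$ while keeping the outer gear intact. I expect a short case analysis, organized by how $A$ and the sectors attach, to cover all configurations.

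Finally I would verify, for the chosen assignment, that the three rail branch sets have exactly their four prescribed neighbors, that each of the two apexes is joined to its three prescribed connectors (after deleting any superfluous edges), and that all branch sets are connected and pairwise disjoint with the connecting paths internally disjoint; this certifies the required $\mathfrak{H}$-minor. The genuinely delicate point, as indicated, is the inner gear: guaranteeing uniformly over all configurations that the hub side yields an apex with three connectors reaching the correct consecutive pairs of rails, which is where internal $4$-connectivity and the surplus of $O$ do the work.
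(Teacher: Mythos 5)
Your outer ``gear'' is fine and, as you say, routine: contracting $O$ to one apex and each arc $(w_i,w_{i+1})_C$ to a connector does realize half of $\mathfrak{H}$ on the rails $w_1,w_2,w_3$. But the step you defer --- producing the inner apex and three inner connectors --- is not a technical loose end; it is the entire content of the lemma, and your proposed repair cannot work in the form you describe. By planarity, everything inside $C$ is the only material available for your inner gear: every vertex of $O$, and every ``extra landing'' of $O$ on $C$, lies outside of or on $C$, and in your plan is already consumed by the outer apex, the outer connectors, and the rails. So when $A$ is a single vertex adjacent to exactly $w_1,w_2,w_3$ --- a configuration fully compatible with internal $4$-connectivity and with all the hypotheses of the lemma --- there is simply no inner gear to be found, and no amount of surplus on $O$ can supply one without tearing up the outer gear. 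Any fix must therefore abandon the inside/outside, two-gear decomposition altogether and re-assign roles globally; you name this possibility but do not carry it out, and that unperformed case analysis is precisely the lemma.

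For comparison, the paper's model is deliberately asymmetric and ignores the inside/outside dichotomy: it takes one apex to be the attachment $w_1$ itself ($\mu(h^1)=\{w_1\}$) and the other apex to be a vertex of $O$ ($\mu(h^2)=\{u_5\}$); the three degree-$4$ vertices are realized by two arcs of $C$ and one arc of $O$ ($\mu(h_3)=[u_3,u_1]_O$); the bridge $A$ serves as just \emph{one} connector ($\mu(h_1^1)=V(A)$); and the remaining five connectors are single vertices, namely the private neighbors $x_1,x_3,x_5$ on $C$ and the vertices $u_2,u_4$ of $O$. This is exactly why the hypothesis $|V(O)|\ge 5$ is there: the two extra vertices $u_4,u_5$ of $O$ are used as branch sets in their own right (an apex and a connector), not merely to force additional attachments on $C$, and the proof splits into two cases according to whether $u_4,u_5$ both lie in $(u_1,u_2)_O$ or straddle $u_2$. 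Your proposal, which contracts all of $O$ into one vertex, discards precisely the structure that makes the construction possible.
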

	\begin{proof}
		Let $u_4, u_5 \in V(O)$ be two vertices other than $u_1, u_2, u_3$. For $i \in \{1, \dots, 5\}$, let $x_i$ be a neighbor of $u_i$ in $C$ such that $x_1 \in (w_1, w_2)$, $x_2 \in (w_2, w_3)$, and $x_3 \in (w_3, w_1)$. Note that $x_1, \dots, x_5$ are distinct vertices. Without loss of generality, we only need to consider the case where $\{u_4, u_5\} \subseteq (u_1, u_2)_O$ and the case where $u_4 \in (u_1, u_2)_O$ and $u_5 \in (u_2, u_3)_O$.
		
		For the former case, we may further assume that $u_1, u_4, u_5, u_2$ occur in this clockwise order in $O$ and $\{x_4, x_5\} \not\subset [x_1, w_2]_C$. We have $x_5 \in (w_2, x_2)_C$. Set $\mu(h^1) := \{w_1\}$, $\mu(h^2) := \{u_5\}$, $\mu(h_1) := [x_2, w_3]_C$, $\mu(h_2) := (x_1, x_5)_C$, $\mu(h_3) := [u_3, u_1]_O$, $\mu(h_1^1) := V(A)$, $\mu(h_2^1) := \{x_1\}$, $\mu(h_3^1) := \{x_3\}$, $\mu(h_1^2) := \{x_5\}$, $\mu(h_2^2) := \{u_4\}$, and $\mu(h_3^2) := \{u_2\}$. It is clear that $\mu$ realizes an $\mathfrak{H}$ minor, as illustrated in Figure~\ref{subfig:lemma1}; one can easily find the required paths to form the whole minor, which we omit for ease of presentation.
		
		For the latter case, set $\mu(h^1) := \{w_1\}$, $\mu(h^2) := \{u_2\}$, $\mu(h_1) := (x_2, x_3)_C$, $\mu(h_2) := (x_1, x_2)_C$, $\mu(h_3) := [u_3, u_1]_O$, $\mu(h_1^1) := V(A)$, $\mu(h_2^1) := \{x_1\}$, $\mu(h_3^1) := \{x_3\}$, $\mu(h_1^2) := \{x_2\}$, $\mu(h_2^2) := \{u_4\}$, and $\mu(h_3^2) := \{u_5\}$. It is clear that $\mu$ realizes an $\mathfrak{H}$ minor, as illustrated in Figure~\ref{subfig:lemma2}.
	\end{proof}
	
	\begin{figure}[!ht]
		\centering
		\subfloat[$\{u_4, u_5\} \subseteq (u_1, u_2)_O$. \label{subfig:lemma1}]{%
			\includegraphics[scale=1]{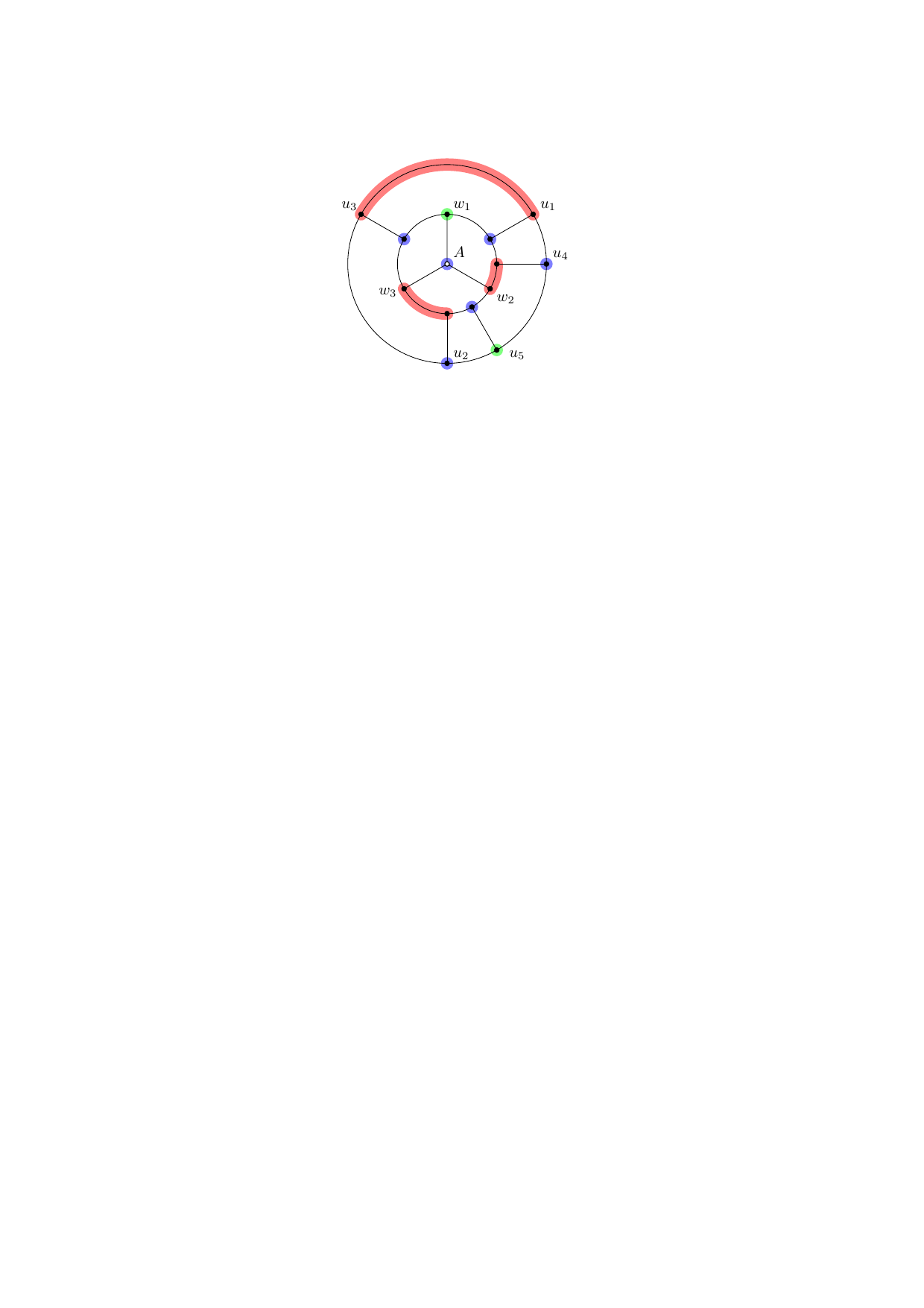}
		}
		\hspace{30pt}
		\subfloat[$u_4 \in (u_1, u_2)_O$ and $u_5 \in (u_2, u_3)_O$. \label{subfig:lemma2}]{%
			\includegraphics[scale=1]{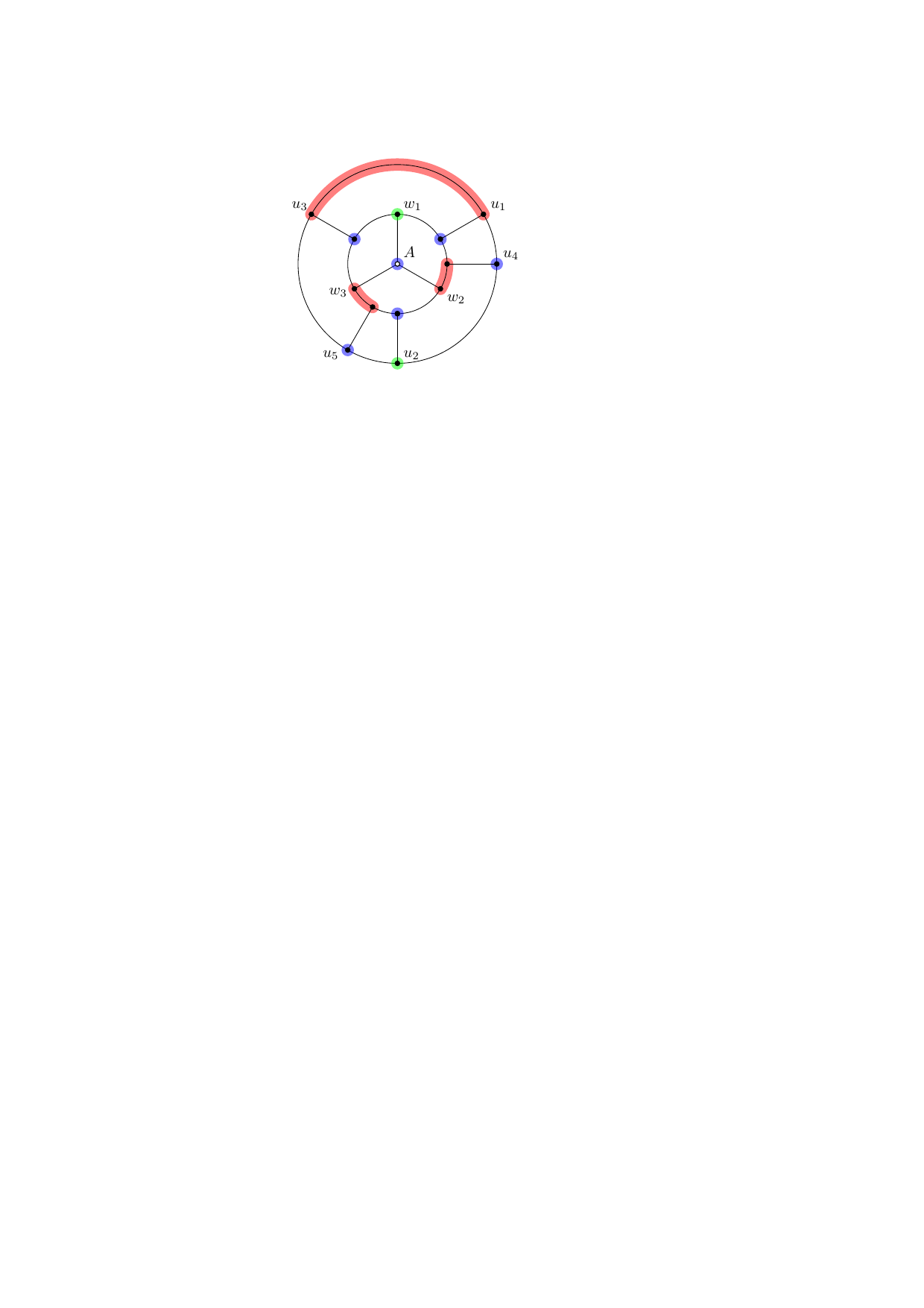}
		}
		\caption{Illustrations for the cases in the proof of Lemma~\ref{lem} demonstrating an $\mathfrak{H}$ minor in $G$.}
		
		\label{fig:lemma}
	\end{figure}
	
	\begin{lemma} \label{lew}
		Let $w_1,w_2,w_3$ be attachments of $B$ occurring on $C$ in clockwise order. Suppose there exist vertices $u,u_1,u_2,u_3 \in V(O)$ occurring on $O$ in clockwise order such that $u$ has a neighbor in $(w_1,w_2)_C$ and another neighbor in $(w_2,w_3)_C$, and $u_2$ has a neighbor in $(w_3,w_1)_C$. Then $G$ contains an $\mathfrak{H}$ minor.
	\end{lemma}
	
	\begin{proof}
		Let $z_1$ be a neighbor of $u$ in $(w_1, w_2)_C$ and $z_2$ be a neighbor of $u$ in $(w_2, w_3)_C$. Let $x_2$ be a neighbor of $u_2$ in $(w_3, w_1)_C$. So $u_1$ has a neighbor in $(z_2,x_2)_C$, and $u_3$ has a neighbor in $(x_2,z_1)_C$.
		
		Set $\mu(h^1) := \{w_2\}$, $\mu(h^2) := \{u_2\}$, $\mu(h_1) := \{u\}$, $\mu(h_2) := (z_2, x_2)_C$, $\mu(h_3) := (x_2, z_1)_C$, $\mu(h_1^1) := \{z_2\}$, $\mu(h_2^1) := V(A)$, $\mu(h_3^1) := \{z_1\}$, $\mu(h_1^2) := \{u_1\}$, $\mu(h_2^2) := \{x_2\}$, and $\mu(h_3^2) := \{u_3\}$. It is clear that $\mu$ realizes an $\mathfrak{H}$ minor in $G$.
	\end{proof}
	
	The following lemma gives a simple sufficient condition for $|V(O)| \ge 5$.
	
	\begin{lemma} \label{O5}
		If there is an empty sector, then $|V(O)| \ge 5$.
	\end{lemma}
	
	\begin{proof}
		As there is an empty sector, there are attachments $w_1,w_2$ of $B$ with $w_1 w_2 \in E(C)$. Since $w_1$ and $w_2$ are attachments of $B$, both have degree at least $3$ in $H$. By Lemma~\ref{lem:na4}, at least one of them has degree exactly $3$ in $G$. Without loss of generality, assume that $w_2$ has degree $3$ in $G$. So $w_2$ does not have any neighbor in $O$. Let $w_3$ be the neighbor of $w_2$ on $C$ distinct from $w_1$.
		
		By Lemma~\ref{lem:n3c}, every facial cycle of $G$ has length at least $4$. Suppose, for a contradiction, that the facial cycle containing the path $w_1w_2w_3$ has length exactly 4. Then it must be of the form $w_1w_2w_3vw_1$ for some vertex $v \in V(O)$. Since $w_1$ is an attachment of $B$, it has degree at least $4$ in $G$. Moreover, the vertex $v$ also has degree at least $4$ in $G$, as it has two neighbors in $O$ and two neighbors in $C$. This contradicts Lemma~\ref{lem:na4}. Therefore, the facial cycle of $G$ containing $w_1 w_2 w_3$ has length at least $5$. As $O$ is a longest facial cycle of $G$, it follows that $|V(O)| \ge 5$.
	\end{proof}

	We are now equipped to prove Theorem~\ref{thm:H}.

	Let $M$ be a matching such that every edge in $M$ has one end-vertex in $O$ and the other in $C$, with the end-vertices in $C$ belonging to distinct sectors (so there are at least $|M|$ non-empty sectors). Subject to these conditions, assume that $M$ is as large as possible.
	
	We now consider the following cases based on the value of $|M|$.
	
	\medskip
	\noindent \textbf{Case 1.} If $|M| = 0$, then all sectors are empty, and all vertices in $C$ are attachments of $B$. 
	
	It follows from Lemma~\ref{O5} that $|V(O)| \ge 5$.
	
	For any edge $uv \in E(O)$, there exist vertices $u',v' \in V(C)$ such that $u'uvv'$ is a subpath of a facial cycle of $G$. Since every vertex of $C$ is an attachment of $B$, both $u'$ and $v'$ have degree at least $4$. By Lemma~\ref{lem:na4}, the vertices $u'$ and $v'$ are non-adjacent. 
	
	Let $u_0u_1u_2u_3$ be a path on $O$ such that the vertices $u_0,u_1,u_2,u_3$ occur on $O$ in clockwise order. Applying the above observation to each of the edges $u_0u_1$, $u_1u_2$, and $u_2u_3$, we obtain, for each $i\in\{1,2,3\}$, vertices $u_{i-1}',x_i,w_i \in V(C)$ such that some facial cycle of $G$ contains $u_{i-1}'u_{i-1}u_ix_i$ as a subpath, with $w_i \in (u_{i-1}',x_i)_C$. Consequently, the vertices $x_1,x_2,x_3,w_1,w_2,w_3$ are pairwise distinct, and for each $i \in \{1,2,3\}$, we have $u_ix_i \in E(G)$ and $x_i \in (w_i,w_{i+1})_C$, where we denote $w_4:=w_1$. It now follows from Lemma~\ref{lem} that $G$ contains an $\mathfrak{H}$ minor, a contradiction.
	
	\medskip
	\noindent \textbf{Case 2.} Suppose $|M| = 1$. So there is at least one non-empty sector.
	
	If there is precisely one non-empty sector, then $B$ must have at least four attachments; otherwise, the three attachments would form a 3-cut of $G$ and, by Lemma~\ref{lem:i4c}, would be pairwise non-adjacent, yielding three non-empty sectors, a contradiction. Let $w_1, w_2$ be attachments of $B$ such that $(w_1, w_2)$ is the unique non-empty sector. All vertices in $[w_2, w_1]$ are attachments of $B$. Let $u_1$ be a vertex in $O$ that has a neighbor $x_1$ in $(w_1, w_2)$. 
	
	Since $B$ has at least four attachments, $(w_2, w_1)$ contains at least two vertices. By Lemma~\ref{lem:na4}, $u_1$ is not adjacent to any vertex in $(w_2, w_1)$. Furthermore, since $\{w_1, w_2\}$ is a 2-cut of $H$ and $G$ is 3-connected, there must be some vertex $u_2$ of $O$ that joins to $(w_2, w_1)$. In fact, there must exist at least one more vertex $u_3$ of $O$ that joins to $(w_2, w_1)$; otherwise, $\{w_1, w_2, u_2\}$ would form a 3-cut of $G$ whose removal results in one component containing $\{u_1, x_1\}$ and another containing $(w_2, w_1)$, contradicting Lemma~\ref{lem:i4c}. 
	
	We may assume that $u_1, u_2, u_3$ occur on $O$ in this clockwise order.
	Let $x_i$ be a neighbor of $u_i$ in $C$ for $i\in \{2,3\}$. As $x_2,x_3$ are attachments of $B$, they are not adjacent by Lemma~\ref{lem:na4}. This implies that $(x_2, x_3)$ contains an attachment of $B$. As there is only one non-empty sector, it follows from Lemma~\ref{O5} that $|V(O)|\ge5$. Therefore, by Lemma~\ref{lem}, $G$ contains an $\mathfrak{H}$ minor, a contradiction.
	
	If there are at least two non-empty sectors, then, since $|M|=1$ and $M$ is assumed to be as large as possible, there is a vertex $u$ in $O$ such that $u$ is the only vertex to which a non-empty sector can join. Specifically, there exist attachments $w_1, w_2, w_3$ of $B$ such that each of $(w_1, w_2)$ and $(w_2, w_3)$ contains at least one non-empty sector, while $(w_1, w_3)$ contains all non-empty sectors. Furthermore, $u$ is the only vertex in $O$ that joins to $(w_1, w_3)$, and every vertex in $O$ other than $u$ joins to $[w_3, w_1]$. Thus, $u$ has a neighbor in $(w_1, w_2)$ and a neighbor in $(w_2, w_3)$. Since $|V(O)| \ge 4$, there exist three distinct vertices in $O$ other than $u$, each of which joins to $[w_3, w_1]$. It follows from Lemma~\ref{lew} that $G$ contains an $\mathfrak{H}$ minor, a contradiction.
	
	\medskip
	\noindent \textbf{Case 3.} Suppose $|M| = 2$. So there are at least two non-empty sectors.
	
	Assume that there are precisely two non-empty sectors. Then there exist attachments $w_1,w_2,w_3,w_4$ of $B$ occurring on $C$ in clockwise order such that $w_1 \neq w_4$ (and possibly $w_2=w_3$), and $(w_1,w_2)_C$ and $(w_3,w_4)_C$ are the two non-empty sectors. By the case assumption, there exist vertices $u_1,u_2 \in V(O)$ such that $u_1$ joins to $(w_1,w_2)$ and $u_2$ joins to $(w_3,w_4)$. Moreover, as $(w_1,w_2)$ and $(w_3,w_4)$ are the only non-empty sectors, every vertex of $[w_2,w_3] \cup [w_4,w_1]$ is an attachment of $B$ and, by Lemma~\ref{O5}, $|V(O)|\ge5$.
	
	Suppose first that $w_2=w_3$ and $\{w_1,w_2,w_4\}$ is a $3$-cut of $G$. Since $G-\{w_1,w_4\}$ is connected, one component of $G-\{w_1,w_2,w_4\}$ contains $V(O)\cup (w_1,w_2) \cup (w_2,w_4)$, while another component contains $V(A)\cup (w_4,w_1)$, since every vertex of $(w_4,w_1)$ is an attachment of $B$. By Lemma~\ref{lem:i4c}, the latter component consists of a single vertex, and hence $(w_4,w_1)$ is empty. It follows that $w_4w_1 \in E(C)$, and thus $\{w_1,w_2,w_4\}$ is not an independent set, contradicting Lemma~\ref{lem:i4c}.
	
	Now suppose that $w_2=w_3$ and $\{w_1,w_2,w_4\}$ is not a $3$-cut of $G$. Then there exists a vertex $u \in V(O)$ adjacent to some attachment in $(w_4,w_1)$. By Lemma~\ref{lem:na4}, the vertices $u_1,u_2,u$ are distinct. It follows from Lemma~\ref{lem} that $G$ has an $\mathfrak{H}$ minor, a contradiction.
	
	Therefore, $w_2 \neq w_3$.
	
	By Lemma~\ref{lem}, we assert that there is no vertex in $O$ that joins to $(w_2, w_3)$ or $(w_4, w_1)$. Thus, as $|V(O)|\ge5$, it causes no loss of generality to assume that there are three vertices $u_3, u_4, u_5$ in $O$ each having a neighbor in $[w_1, w_2]$. Note that it is possible that $u_1$ is one of $u_3, u_4, u_5$. We assume $u_2, u_3, u_4, u_5$ occur in $O$ in this clockwise order. Let $D$ be the union of $B$, $[w_2, w_3]_C$ and $[w_4, w_1]_C$. We consider the induced embedding of $D$. Let $P_1$ be the path in $D$ with end-vertices $w_1, w_2$ contained in the boundary of the unbounded face such that $P_1$ is internally disjoint from $C$. Similarly, let $P_2$ be the path in $D$ with end-vertices $w_3, w_4$ contained in the boundary of the unbounded face such that $P_2$ is internally disjoint from $C$. It is clear that $P_1, P_2$ have length at least 2. 
	
	We show that $P_1$ and $P_2$ are disjoint. Suppose otherwise, and let $v$ be the common vertex of $P_1$ and $P_2$ that is closest to $w_1$ along $P_1$. Let $P_1'$ be the subpath of $P_1$ from $w_1$ to $v$, and let $P_2'$ be the subpath of $P_2$ from $w_4$ to $v$. By Lemma~\ref{lem:n3c}, $\{w_1, v, w_4\}$ does not induce a 3-cycle. Thus, at least one of $[w_4, w_1]$, $P_1'$, or $P_2'$ has length at least 2. Since no vertex in $O$ joins to $(w_4, w_1)$, it follows that $\{w_1, v, w_4\}$ is a 3-cut in $G$ separating $V(O)$ from the union of the interiors of $[w_4, w_1]$, $P_1'$, and $P_2'$. By Lemma~\ref{lem:i4c}, one component of $G - \{w_1, v, w_4\}$ consists of a single vertex $v'$, and $\{w_1, v, w_4\}$ must be an independent set. It follows that $v'$ is a common vertex of $P_1$ and $P_2$ that lies closer to $w_1$ along $P_1$ than $v$ does, contradicting the choice of $v$.
	
	Let $x_2$ be a neighbor of $u_2$ in $(w_3, w_4)_C$ and $x_i$ be a neighbor of $u_i$ in $[w_1, w_2]_C$ for every $i \in \{3, 4, 5\}$. Set $\mu(h^1) := \{u_4\}$, $\mu(h^2) := V(P_2) \setminus \{w_3, w_4\}$, $\mu(h_1) := [w_1, x_3]_C$, $\mu(h_2) := [x_5, w_2]_C$, $\mu(h_3) := \{u_2, x_2\}$, $\mu(h_1^1) := \{x_4\}$, $\mu(h_2^1) := \{u_5\}$, $\mu(h_3^1) := \{u_3\}$, $\mu(h_1^2) := V(P_1) \setminus \{w_1, w_2\}$, $\mu(h_2^2) := \{w_3\}$, and $\mu(h_3^2) := \{w_4\}$. It is clear that $\mu$ realizes an $\mathfrak{H}$ minor in $G$, as illustrated in Figure~\ref{fig:2}, which leads to a contradiction.
	
	Now, assume there are more than two non-empty sectors. Since $|M| = 2$, there exist not necessarily distinct attachments $w_1, w_2, w_3, w_4, w_5, w_6 \in V(C)$ occurring on $C$ in clockwise order and vertices $u_1, u_2 \in V(O)$ such that $(w_1, w_2)_C$, $(w_3, w_4)_C$, and $(w_5, w_6)_C$ are non-empty sectors, $u_1$ joins to both $(w_1, w_2)_C$ and $(w_3, w_4)_C$, and $u_2$ joins to $(w_5, w_6)_C$. It is clear that any vertex in $O$ other than $u_1$ and $u_2$ joins to $[w_4, w_1]$. Since $|V(O)| \ge 4$, it follows from Lemma~\ref{lew} that $G$ contains an $\mathfrak{H}$ minor, a contradiction.
	
	\begin{figure}[!ht]
		\centering
		\includegraphics[scale = 1]{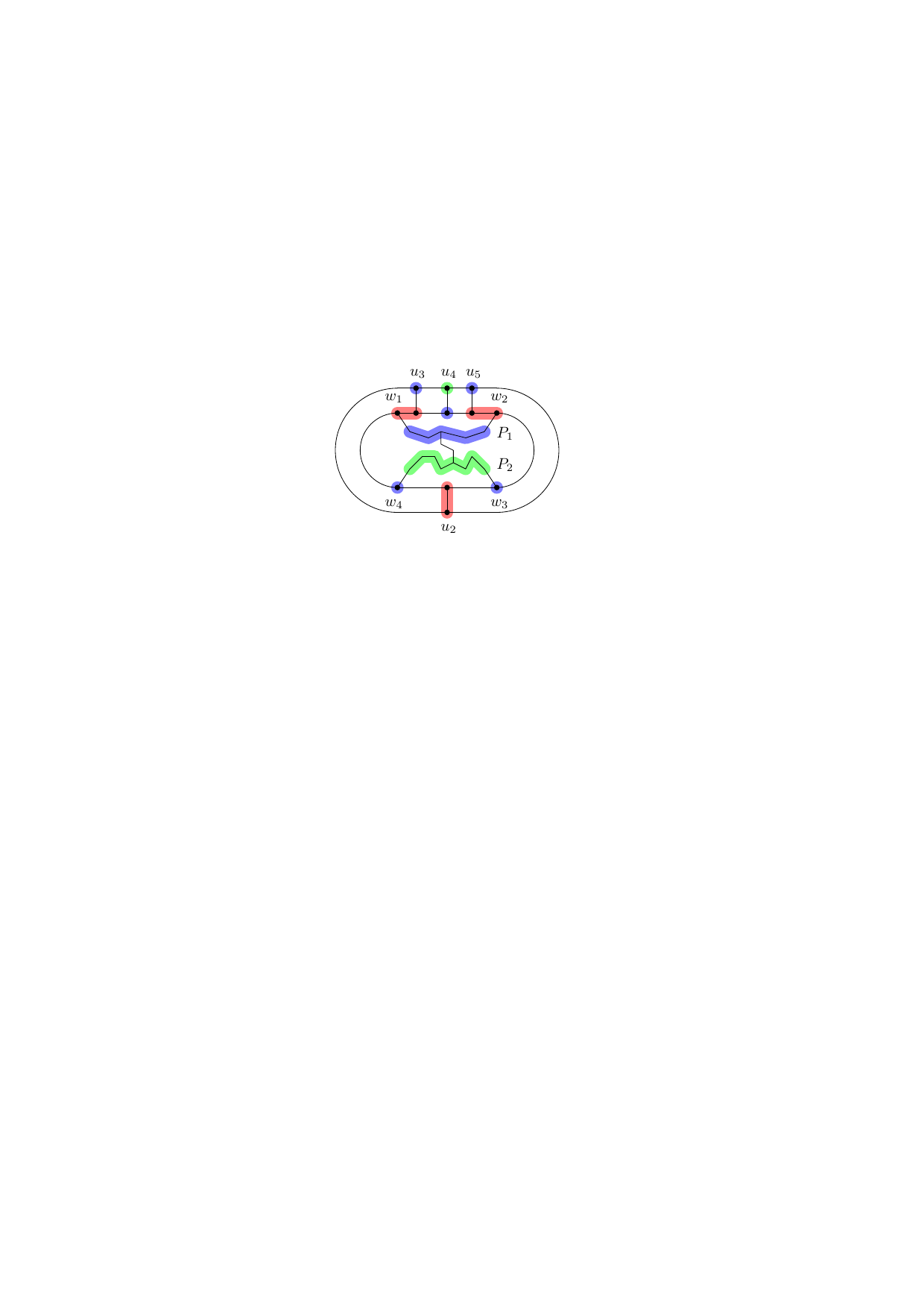}
		\caption{An $\mathfrak{H}$ minor in $G$ when $|M| = 2$.}
		\label{fig:2}
	\end{figure}
	
	\medskip
	\noindent \textbf{Case 4.} If $|M| \ge 3$, then there exist vertices $u_1, u_2, u_3$ in $O$ that occur in this clockwise order and attachments $w_1, w_2, w_3$ in $C$ that occur in this clockwise order such that for each $i \in \{1, 2, 3\}$, $u_i$ has a neighbor $x_i$ contained in $(w_i, w_{i + 1})$, where $w_4 := w_1$. 
	
	If $|V(O)| \ge 5$, then Lemma~\ref{lem} implies that $G$ contains an $\mathfrak{H}$ minor, a contradiction. Hence every facial cycle of $G$ has length $4$. Without loss of generality, we may assume that there is a vertex $u_4 \in V(O)$ such that $O=u_1u_2u_3u_4u_1$.
	
	We claim that $A$ is the only component of $H-V(C)$. Suppose, to the contrary, that there exists another component $A'$ of $H-V(C)$. Let $B'$ denote the union of $A'$ together with all edges joining $A'$ to $C$ and their end-vertices.
	Since $A$ was chosen arbitrarily among the components of $H-V(C)$, we may apply the preceding argument to $B'$ in place of $B$. This yields vertices $u'_1,u'_2,u'_3 \in V(O)$ occurring on $O$ in clockwise order and attachments $w'_1,w'_2,w'_3$ of $B'$ occurring on $C$ in clockwise order such that, for each $i \in \{1,2,3\}$, the vertex $u'_i$ has a neighbor $x'_i \in (w'_i,w'_{i+1})_C$, where we define $w'_4:=w'_1$. Without loss of generality, we may assume that $w_1,w_2,w_3,w'_1,w'_2,w'_3$ occur on $C$ in clockwise order.
	Set $\mu(h^1):=(x_1,x_2)_C$, $\mu(h^2):=(x'_1,x'_2)_C$, $\mu(h_1):=V(O)$, $\mu(h_2):=(x'_2,x_1)_C$, $\mu(h_3):=(x_2,x'_1)_C$, $\mu(h_1^1):=\{x_1\}$, $\mu(h_2^1):=V(A)$, $\mu(h_3^1):=\{x_2\}$, $\mu(h_1^2):=\{x'_2\}$, $\mu(h_2^2):=V(A')$, and $\mu(h_3^2):=\{x'_1\}$. It is straightforward to verify that $\mu$ realizes an $\mathfrak{H}$ minor in $G$, a contradiction.
	This proves that $H-V(C)=A$.

	Let $u_1u_2v_3v_4u_1$, where $v_3,v_4 \in V(G)$, denote the facial cycle of $G$ containing the edge $u_1u_2$ distinct from $u_1u_2u_3u_4u_1$. Clearly, $v_3v_4 \in E(C)$. By Lemma~\ref{lem:na4}, neither $u_1$ nor $u_2$ is adjacent to $w_2$. Hence $v_3v_4$ must be an edge of either $(w_1,w_2)_C$ or $(w_2,w_3)_C$.
	
	Suppose first that $v_3v_4$ is an edge of $(w_1,w_2)_C$. Then $u_2$ has degree at least $4$, and Lemma~\ref{lem:na4} implies that $v_3$ has degree $3$. Let $v_1v_2v_3v_4v_1$, where $v_1,v_2 \in V(G)$, denote the facial cycle of $G$ containing the edge $v_3v_4$ distinct from $u_1u_2v_3v_4u_1$. Since $v_3$ has degree $3$, we have $v_2 \in V(C)$.
	
	Suppose that $v_1 \notin V(C)$. Then $v_1 \in V(A)=V(H-V(C))$, and so $v_4$ is an attachment of $B$. Consequently, $u_2$ has a neighbor $v_3$ in $(v_4,w_2)_C$ and another neighbor $x_2$ in $(w_2,w_3)_C$. Since $u_3$ has a neighbor in $(w_3,w_1)_C$ and $u_1$ is adjacent to $v_4$, the vertex $u_4$ must have a neighbor in $(w_3,v_4)_C$. It now follows from Lemma~\ref{lew} that $G$ contains an $\mathfrak{H}$ minor, a contradiction.

Therefore, $v_1 \in V(C)$. By Lemma~\ref{lem:n3c}, the vertices $u_2$ and $v_2$ are non-adjacent. Hence $v_1$ must lie on $[w_1,v_4]_C$.

We claim that $v_1$ has degree $3$. Suppose otherwise. Then Lemma~\ref{lem:na4} implies that every neighbor of $v_1$ has degree $3$.
As every facial cycle containing $u_2$ and intersecting $[v_3,x_2]_C$ contains exactly three vertices of $[v_3,x_2]_C$, we may write $[v_3,x_2]_C=p_0q_1p_1\cdots q_tp_t$ for some positive integer $t$, where $p_0,q_1,p_1,\dots,q_t,p_t \in V(C)$ with $p_0=v_3$ and $p_t=x_2$. Since $u_2$ has degree at least $4$, Lemma~\ref{lem:na4} implies that each of $p_0,p_1,\dots,p_t$ has degree $3$. Consequently, the attachment $w_2$ belongs to $\{q_1,\dots,q_t\}$.
We next show that each of $q_1,\dots,q_t$ is adjacent to $v_1$. Since $q_1=v_2$, the vertex $v_1$ is adjacent to $q_1$. Suppose that $v_1$ is adjacent to $q_i$ for some $i \in \{1,\dots,t-1\}$. Then Lemma~\ref{lem:na4} implies that $q_i$ has degree $3$. Since $p_i$ also has degree $3$, the cycle $v_1q_ip_iq_{i+1}v_1$ is a facial cycle of $G$, and hence $v_1$ is adjacent to $q_{i+1}$. By induction, $v_1$ is adjacent to every vertex in $\{q_1,\dots,q_t\}$. In particular, $v_1$ is adjacent to $w_2$, contradicting Lemma~\ref{lem:na4}.

Therefore, $v_1$ has degree $3$. It follows that $(x_3,v_1)_C$ contains both $w_1$ and a neighbor of $u_4$. Consequently, one obtains an $\mathfrak{H}$ minor in $G$ by setting $\mu(h^1):=\{v_4\}$, $\mu(h^2):=\{w_3\}$, $\mu(h_1):=(x_3,v_1)_C$, $\mu(h_2):=(v_3,x_2)_C$, $\mu(h_3):=\{u_2\}$, $\mu(h_1^1):=\{v_1\}$, $\mu(h_2^1):=\{v_3\}$, $\mu(h_3^1):=\{u_1,u_4\}$, $\mu(h_1^2):=V(A)$, $\mu(h_2^2):=\{x_2\}$, and $\mu(h_3^2):=\{u_3,x_3\}$, a contradiction.

	This shows that $v_3 v_4$ is not an edge of $(w_1, w_2)_C$; hence, it must be an edge of $(w_2, w_3)_C$. Note that $x_1$ and $v_4$ are neighbors of $u_1$ in $(w_1, w_2)_C$ and $(w_2, w_3)_C$, respectively, and $x_3$ is a neighbor of $u_3$ in $(w_3, w_1)_C$. It follows from Lemma~\ref{lew} that $G$ contains an $\mathfrak{H}$ minor, a contradiction.

	\medskip
	
	This completes the proof of Theorem~\ref{thm:H}.
	
\section{Non-hamiltonian polyhedra without $K_{2,6}$ minors} \label{sec:K26}

In this section, we provide a characterization of non-hamiltonian polyhedral graphs that do not contain a $K_{2,6}$ minor. The main focus is the proof of Theorem~\ref{thm:Hf}, which is presented in Section~\ref{sec:Hfp}. This result sets the stage for addressing Conjecture~\ref{con:K26}, with its resolution provided in Section~\ref{sec:K26c}.

	\subsection{Proof of Theorem~\ref{thm:Hf}} \label{sec:Hfp}
	
This section is devoted to the proof of Theorem~\ref{thm:Hf}. Let $G$ be a non-hamiltonian polyhedral graph on $n$ vertices with no $K_{2,6}$ minor. We shall show that $G$ contains a spanning subgraph belonging to the Herschel family $\{\mathfrak{H}^\bullet_n : n \ge 11\} \cup \{\mathfrak{H}^\circ_n : n \ge 13\} \cup \{\mathfrak{H}_{13}, \mathfrak{H}_{15}\}$, where the graphs $\mathfrak{H}^\bullet_n$, $\mathfrak{H}^\circ_n$, $\mathfrak{H}_{13}$, and $\mathfrak{H}_{15}$ are depicted in Figure~\ref{fig:Herschelfamily} without the dashed edges.
	
	We emphasize that the symmetry of the Herschel graph plays a crucial role in our analysis. For clarity, we adopt the labeling shown in Figure~\ref{subfig:Herschel}. Recall that $\mathfrak{H}$ possesses twelve automorphisms, each either fixing $h^1$ or mapping it to $h^2$. When $h^1$ is fixed, there exists a unique automorphism corresponding to each permutation of $\{h_1^1, h_2^1, h_3^1\}$.
	
Throughout this section, subscripts in the labels of vertices of $\mathfrak{H}$ are taken modulo $3$.
	
	By Theorem~\ref{thm:H}, $G$ contains an $\mathfrak{H}$ minor.
	
	Let $H$ be a subgraph of $G$ such that $H$ contains an $\mathfrak{H}$ minor, whereas no proper subgraph of $H$ contains an $\mathfrak{H}$ minor. Then $H$ contains no $K_{2,6}$ minor. Let $\mu$ be a mapping realizing an $\mathfrak{H}$ minor in $H$ such that $\sum_{v \in V(\mathfrak{H})} |\mu(v)|$ is minimized. It is well known that if $v \in V(\mathfrak{H})$ has degree $3$ in $\mathfrak{H}$, then $|\mu(v)|=1$, while if $v \in V(\mathfrak{H})$ has degree $4$ in $\mathfrak{H}$, then $H[\mu(v)]$ is a path $S$ such that, whenever $|V(S)|>1$, each end-vertex of $S$ is incident with precisely two paths in $\{\mu(e): e \in E(\mathfrak{H})\}$. We call such a subgraph $H$ a \emph{Herschel frame}. A \emph{split} of $H$ is a vertex $v \in V(\mathfrak{H})$ satisfying $|\mu(v)|>1$.
	
	It follows immediately that $|\mu(v)|=1$ for every $v \in V(\mathfrak{H}) \setminus \{h_1,h_2,h_3\}$. Hence any split of $H$ must belong to $\{h_1,h_2,h_3\}$.
	
	For each $i \in \{1,2,3\}$, let $S_i$ denote the path $H[\mu(h_i)]$. Suppose that $h_i$ is a split of $H$ for some $i \in \{1,2,3\}$. Then, by planarity, either one end-vertex of $S_i$ is incident with $\mu(h_ih_{i-1}^1)$ and $\mu(h_ih_{i-1}^2)$ while the other end-vertex is incident with $\mu(h_ih_i^1)$ and $\mu(h_ih_i^2)$, or one end-vertex is incident with $\mu(h_ih_{i-1}^1)$ and $\mu(h_ih_i^1)$ while the other end-vertex is incident with $\mu(h_ih_{i-1}^2)$ and $\mu(h_ih_i^2)$. 
	The latter situation cannot occur, for otherwise $H$ would contain a $K_{2,6}$ minor, as illustrated in Figure~\ref{fig:HK26}. (Figure~\ref{fig:HK26} depicts the case $i=2$; the remaining cases follow by symmetry of the Herschel graph.) Moreover, $H$ has at most two splits, since otherwise it would contain a $K_{2,6}$ minor, as illustrated in Figure~\ref{fig:HK269}.
	
	\begin{figure}[!ht]
		\centering
		\includegraphics[scale = 1]{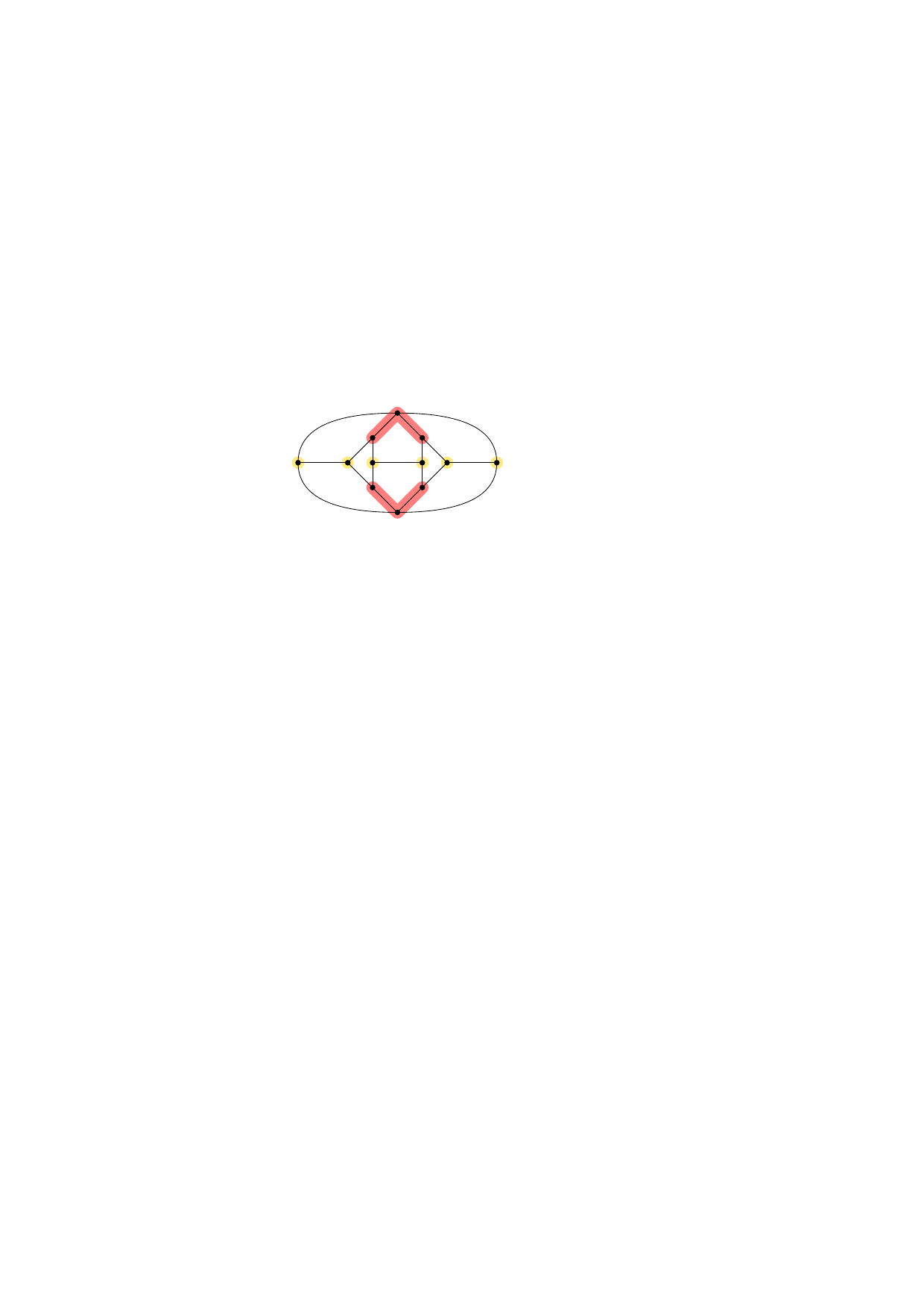}
		\caption{A graph containing a $K_{2,6}$ minor.}
		\label{fig:HK26}
	\end{figure}
	
	\begin{figure}[!ht]
		\centering
		\includegraphics[scale = 1]{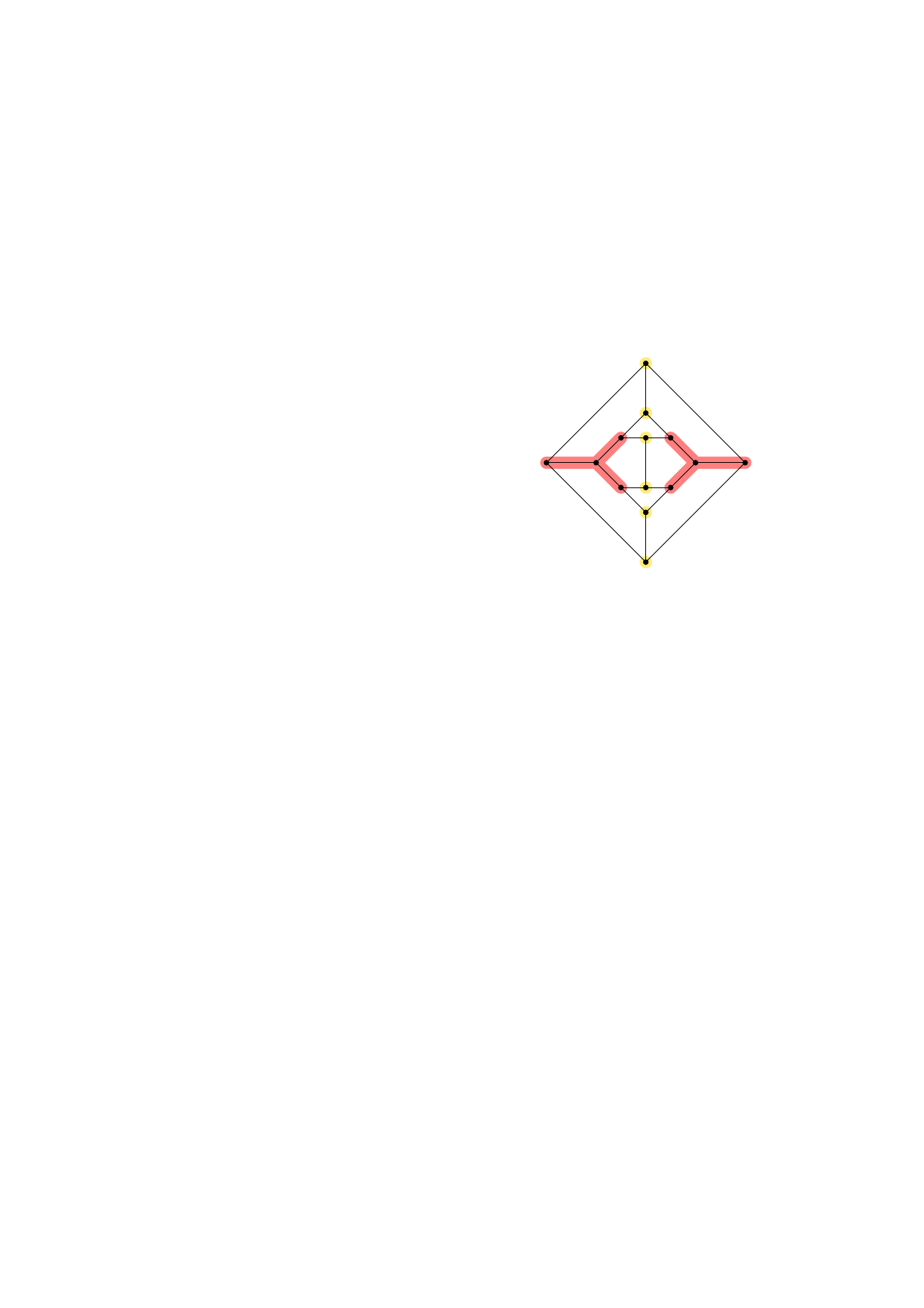}
		\caption{A graph containing a $K_{2,6}$ minor.}
		\label{fig:HK269}
	\end{figure}
	
	From now on, fix a Herschel frame $H$ such that $|V(H)|$ is minimized and, subject to this, the number of splits of $H$ is maximized.
	
	Recall that $\mu$ realizes an $\mathfrak{H}$ minor in $H$. We may also regard $\mu$ as realizing an $\mathfrak{H}$ minor in $G$. In particular, for each $i \in \{1,2,3\}$, the path $S_i$ is a spanning path of $G[\mu(h_i)]$. Whenever $|\mu(v)|=1$ for some $v \in V(\mathfrak{H})$, we identify the unique vertex in $\mu(v)$ with $v$. For any trail $T$ of $\mathfrak{H}$, we define $\mu(T)$ to be the union of the paths $\mu(e)$ over all edges $e \in E(T)$.
	
It immediately follows from the minimality of $|V(H)|$ that for any \( e \in E(\mathfrak{H}) \), \( \mu(e) \) is an induced path and, for any \( i \in \{1, 2, 3\} \), \( S_i \) is an induced path. This also implies that any internal vertex in any of these paths must have a neighbor outside that path (as $G$ is 3-connected).

	We divide the proof into the following cases, based on whether \( H \) is a spanning subgraph of \( G \) and the number of splits of $H$.

	\medskip
	\noindent \textbf{Case 1.} Suppose \( H \) is a spanning subgraph of \( G \) and has no split. 
	We aim to show that \( G \) contains either $\mathfrak{H}_n^\bullet$,  $\mathfrak{H}_n^\circ$, or $\mathfrak{H}_{15}$ as a spanning subgraph. We first prove the following claims.

	\begin{claim} \label{cla:cv}  
		If \( \mu(h_i h_i^j) \) or \( \mu(h_{i+1} h_i^j) \) contains an internal vertex \( v \) for some \( i \in \{1, 2, 3\} \) and \( j \in \{1, 2\} \), then \( h^j v \in E(G) \) and \( v \) has degree \( 3 \).  
	\end{claim}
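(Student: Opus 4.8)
The plan is to prove both assertions for a vertex $v$ internal to $\mu(h_1 h_1^1)$; all remaining cases (the other choices of $i \in \{1,2,3\}$ and $j \in \{1,2\}$, and the edges of the form $\mu(h_{i+1} h_i^j)$) reduce to this one through the twelve automorphisms of $\mathfrak{H}$. The sector-rotations handle the index $i$, the automorphism interchanging $h^1$ and $h^2$ handles $j$, and the automorphism that fixes $h^1$ and transposes $h_1^1$ and $h_2^1$ fixes $h_3^1$ while interchanging its two non-pole edges $h_3 h_3^1$ and $h_1 h_3^1$, thereby exchanging the types $\mu(h_i h_i^j)$ and $\mu(h_{i+1} h_i^j)$. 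In the fixed embedding of the spanning frame $H$, the edge $h_1 h_1^1$ of $\mathfrak{H}$ borders exactly two faces: the pole-quadrilateral $h^1 h_3^1 h_1 h_1^1$, whose four sides are the paths $\mu(h^1 h_3^1),\mu(h_1 h_3^1),\mu(h_1 h_1^1),\mu(h^1 h_1^1)$ and which has $h^1$ as a corner, and the middle quadrilateral $h_1 h_1^1 h_2 h_1^2$, which contains no pole. Since $G$ is $3$-connected it has minimum degree at least $3$, and since $\mu(h_1 h_1^1)$ is induced (by the maximality of $|V(G) \setminus V(H)|$), every edge at $v$ other than its two path-edges is a chord lying in one of these two faces.

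The heart of the proof is to show that $v$ has no chord except $v h^1$. I would argue by contradiction: assuming $v$ has a chord to some $w \neq h^1$, I produce either a Hamilton cycle of $G$ or a Herschel frame strictly better than $H$ under our optimization, both impossible. The mechanism is that such a chord is a shortcut across a face of $H$ that can be spliced into the surrounding paths. If the chord enters the middle quadrilateral, the shortcut re-routes the frame so that part of $\mu(h_1 h_1^1)$ is absorbed into the path realizing $\mu(h_2 h_1^1)$ (or a neighbouring side), which either frees an internal vertex, contradicting the maximality of the number of left-out vertices, or else increases the number of splits; if instead the chord ends at a vertex $w \neq h^1$ on the boundary of the pole-quadrilateral, the shortcut together with the four bounding paths closes up a spanning cycle, that is, a Hamilton cycle of $G$. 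In every sub-case the planar cyclic order of the chords along $\mu(h_1 h_1^1)$ is what guarantees that the re-routing, respectively the closing cycle, is non-self-crossing and hence legitimate. This forces $w = h^1$, so $h^1 v \in E(G)$.

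Finally, to obtain $\deg(v) = 3$ I would use the edge $h^1 v$ just established. The chord $h^1 v$ splits the pole-quadrilateral into two regions, and any further neighbour of $v$ must lie on the boundary of one of these regions or in the middle quadrilateral. A further neighbour in the middle quadrilateral yields a re-routing or a Hamilton cycle exactly as above; a further chord inside the pole-quadrilateral cannot end at another internal vertex of $\mu(h_1 h_1^1)$, since the fan edges to $h^1$ would be crossed, and ending it elsewhere again closes a spanning cycle, or else, together with the pole-to-path fans and the mirror structure at $h^2$, produces a sixth independent path between $h^1$ and $h^2$ and hence a $K_{2,6}$ minor, contradicting the hypothesis. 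Thus $v$ has no neighbour besides its two path-neighbours and $h^1$, giving $\deg(v) = 3$. The main obstacle is the case analysis in the second paragraph: one must check, for every possible position of $w$, that the spliced object is genuinely a Hamilton cycle or a strictly improved frame, and organizing this bookkeeping via the planar order of the chords along $\mu(h_1 h_1^1)$ is where the real work lies.
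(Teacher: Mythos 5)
Your symmetry reduction and your localization of the problem to the two faces bordering \(\mu(h_1 h_1^1)\) match the paper, but both mechanisms you propose for excluding a chord \(vw\) with \(w \neq h^1\) break down, and they break down exactly in the cases where the paper has to invoke the no-\(K_{2,6}\)-minor hypothesis. For the pole quadrilateral, a cycle assembled from the chord and the four bounding paths lies entirely in the closed region bounded by that face, so it misses \(h_2\), \(h^2\), \(h_1^2\), \(h_2^2\), \(h_3^2\), \(h_2^1\) and everything else realizing the rest of \(\mathfrak{H}\); it is never a Hamilton cycle of \(G\). One can sometimes close a spanning cycle of the \emph{abstract} configuration (the subdivided \(\mathfrak{H}\) plus the chord), but to conclude that \(G\) itself is hamiltonian you would need every path \(\mu(e)\) not used by that cycle to have no internal vertices, which is precisely what is unknown at this stage. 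For the middle quadrilateral, your re-routing works only when \(w \in V(\mu(h_1 h_1^2)) \setminus \{h_1\}\) (this is the paper's extra-split argument). When \(w\) lies internally on \(\mu(h_2 h_1^1)\) or \(\mu(h_2 h_1^2)\), or \(w = h_2\), no improvement is available in general, because both paths involved are pinned at \(h_1^1\) (resp.\ \(h_1^2\)) and every attempted redirection either uses a vertex in two paths of the new frame or leaves some edge of \(\mathfrak{H}\) unrealized. A concrete obstruction: let \(F\) be \(\mathfrak{H}\) with \(h_1 h_1^1\) subdivided by \(v\), \(h_2 h_1^1\) subdivided by \(w\), and the edge \(vw\) added. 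Then \(F\) is planar and 3-connected; it is non-hamiltonian (any Hamilton cycle would have to contain a subpath such as \(h_1, v, h_1^1, w, h_2\), leaving a Hamilton path between two vertices of the same colour class of the balanced bipartite graph \(\mathfrak{H} - h_1^1\), which is impossible by parity); and one checks by edge-counting that every Herschel frame of \(F\) is spanning with no splits. Yet in each such frame the internal vertex of \(\mu(h_1 h_1^1)\) has its third neighbour on an adjacent path, not at \(h^1\). So for this configuration neither a Hamilton cycle nor a better frame exists, and your two mechanisms cannot dispose of it.

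What rules such configurations out in the paper is the hypothesis you barely use: \(F\) (equivalently, the first graph of Figure~\ref{fig:6}, obtained after contracting \(w\) into \(h_2\)) contains a \(K_{2,6}\) minor, contradicting that \(G\) has none. The same device handles chords landing internally on \(\mu(h_2 h_1^1)\), \(\mu(h_2 h_1^2)\), \(\mu(h^1 h_1^1)\), \(\mu(h^1 h_3^1)\), on \(\mu(h_1 h_3^1) \setminus \{h_1\}\), or at \(h_2\) or \(h_3^1\), via the two graphs of Figure~\ref{fig:6}. Since the example above satisfies all of your optimality assumptions while violating the claim's conclusion, the \(K_{2,6}\)-freeness cannot be a peripheral remark (your only mention of it, a ``sixth independent path between \(h^1\) and \(h^2\)'', is also not how the minor arises: \(h^1\) has degree 3, so there are at most three such paths; the hubs of the \(K_{2,6}\) come from contracting segments like \(\mu(h^1 h_3^1)\)). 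Once the chord cases are settled by these minor arguments, the degree-3 conclusion is immediate, as in the paper, because \(h^1\) is then the only admissible neighbour of \(v\) outside \(\mu(h_1 h_1^1)\).
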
  
	\begin{proof}  
By symmetry of the Herschel graph, it suffices to consider the case in which $\mu(h_1h_1^1)$ contains an internal vertex $v$. Since $G$ is $3$-connected and planar, $H$ is a spanning subgraph of $G$, and $\mu(h_1h_1^1)$ is an induced path, it follows that $v$ must be adjacent to some vertex $u \in V\bigl(\mu(h_1h_1^2h_2h_1^1h^1h_3^1h_1)\bigr)\setminus \{h_1,h_1^1\}$.
		
		If $u \in V(\mu(h_1 h_1^2)) \setminus \{h_1\}$, it is straightforward to see that a Herschel frame with one more split can be obtained by modifying \( H \) as follows: replace \( S_1 \) with the subpath of \( \mu(h_1 h_1^1) \) joining \( h_1 \) and \( v \), replace \( \mu(h_1 h_1^1) \) with the subpath of \( \mu(h_1 h_1^1) \) joining \( h_1^1 \) and \( v \), and replace \( \mu(h_1 h_1^2) \) with the union of the edge \( vu \) and the subpath of \( \mu(h_1 h_1^2) \) joining \( h_1^2 \) and \( u \). However, this contradicts the maximality of the number of splits of \( H \).  
		
	If $u \in V(\mu(h_1^2 h_2 h_1^1 h^1 h_3^1 h_1)) \setminus \{h_1^2, h_1^1, h^1, h_1\}$, then \( G \) contains one of the graphs shown in Figure~\ref{fig:6} as a minor, and therefore contains a \( K_{2,6} \) minor, leading to a contradiction. 
		
		Thus, \( u \) must be \( h^1 \). The proof above also establishes that \( h^1 \) is the only neighbor of \( v \) not contained in \( \mu(h_1 h_1^1) \), which implies that \( v \) has degree \( 3 \).  
	\end{proof}

	\begin{figure}[!ht]
		\centering{%
			\includegraphics[scale=1]{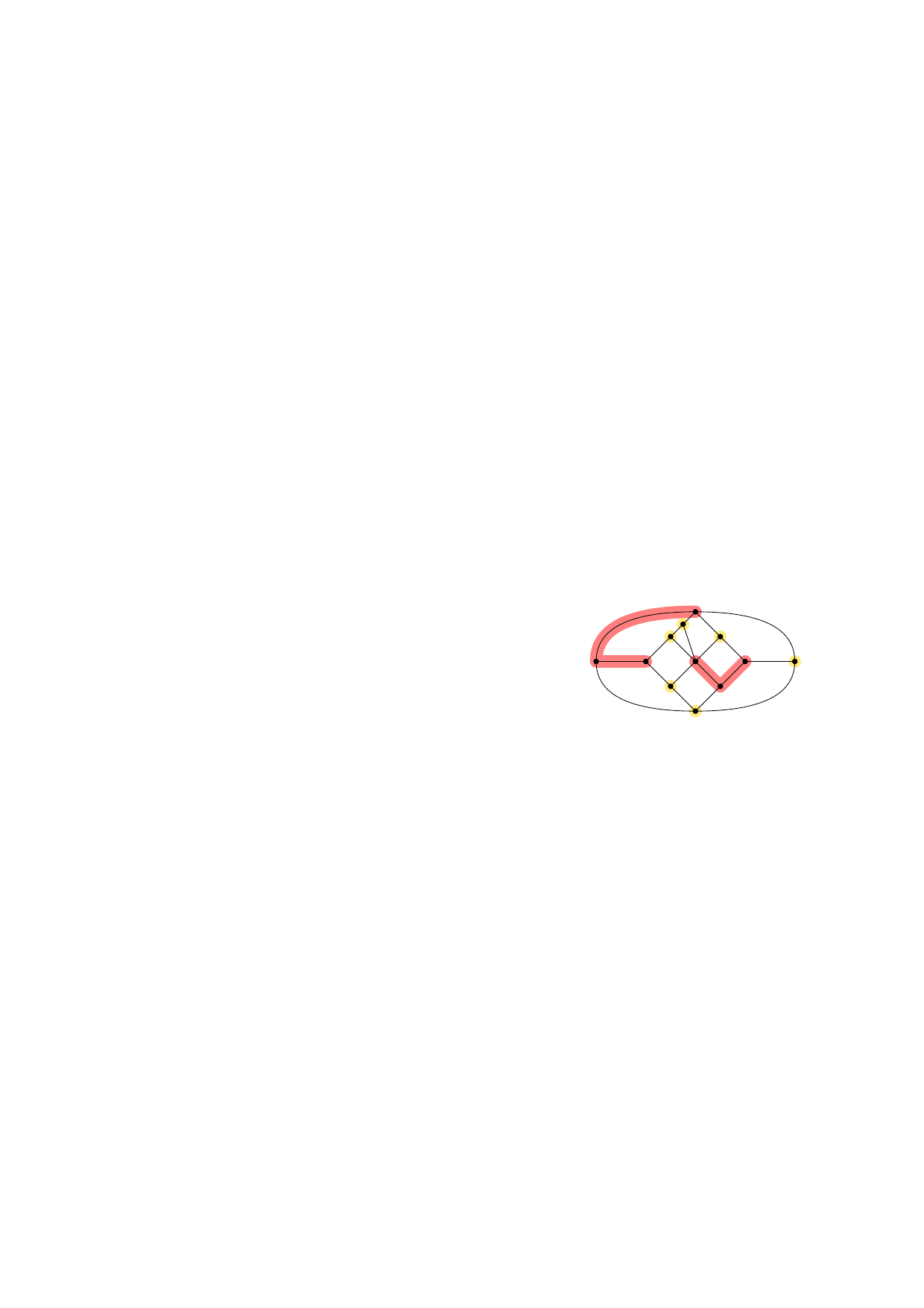}
		}
		\hfill
		{%
			\includegraphics[scale=1]{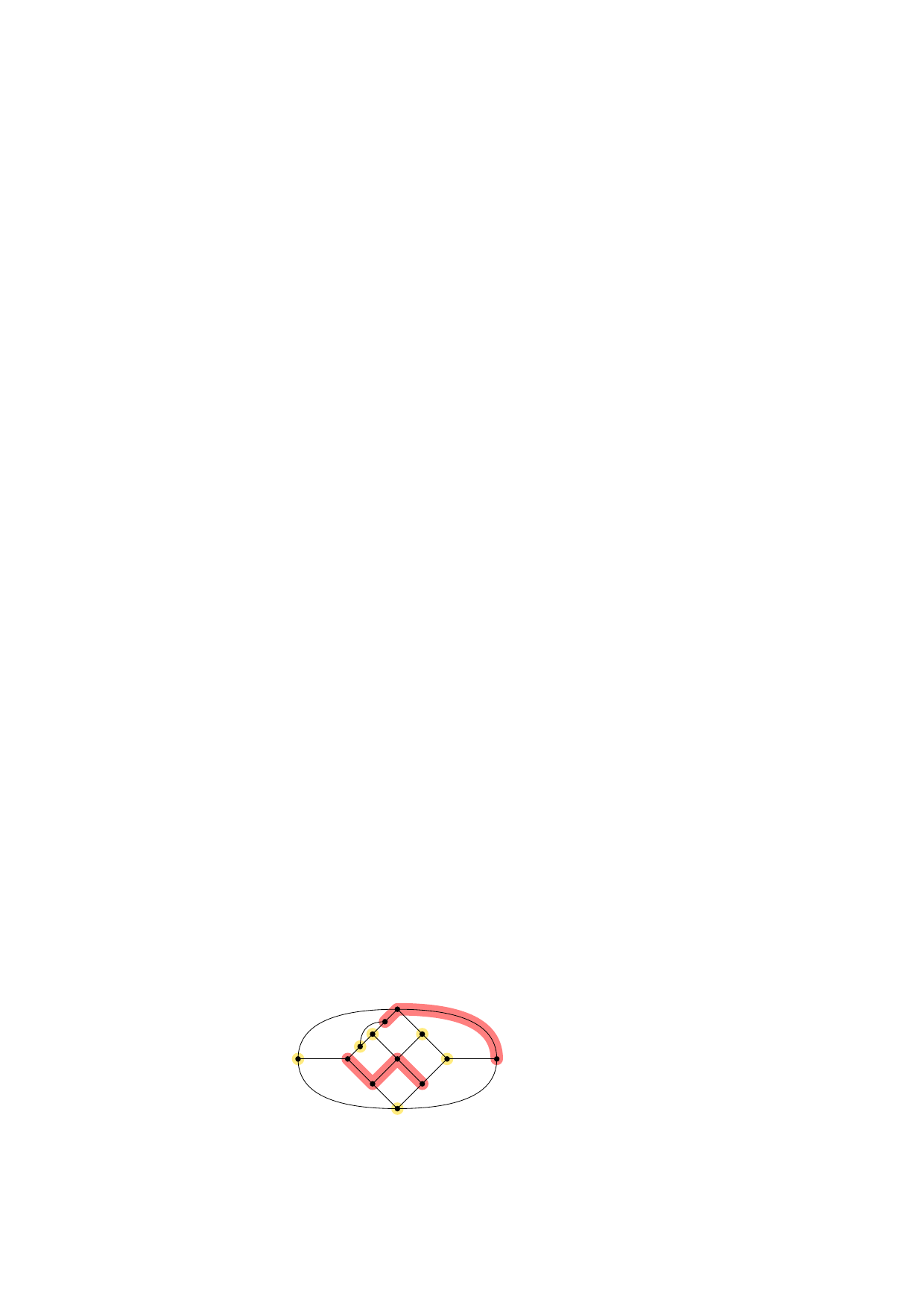}
		}
		\hfill
		{%
			\includegraphics[scale=1]{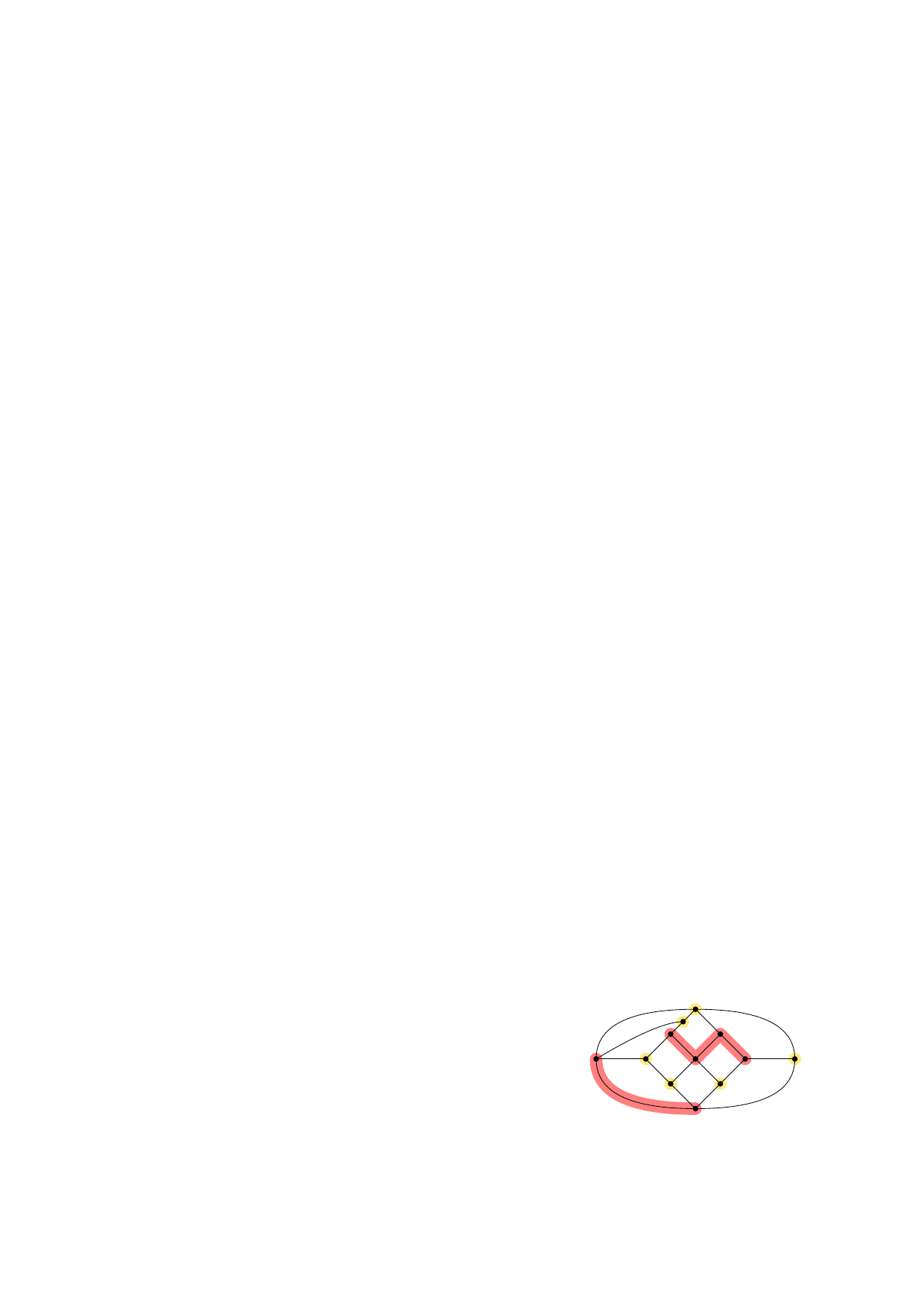}
		}
		\caption{Three graphs, each containing a \( K_{2,6} \) minor.}
		\label{fig:6}
	\end{figure}
	
	\begin{claim} \label{cla:oo}
		If \( \mu(h_i h_i^j) \) or \( \mu(h_{i+1} h_i^j) \) contains an internal vertex for some \( i \in \{1, 2, 3\} \) and \( j \in \{1, 2\} \), then any path \( \mu(h_k h_k^l) \) or \( \mu(h_{k+1} h_k^l) \) with \( k \in \{1, 2, 3\} \) and \( l \in \{1, 2\} \), other than \( \mu(h_i h_i^j) \) and \( \mu(h_{i+1} h_i^j) \), does not contain an internal vertex.
	\end{claim}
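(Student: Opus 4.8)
The plan is to argue by contradiction: suppose that, besides the internal vertex on an equatorial edge at the host \( h_i^j \), some equatorial edge at a \emph{different} host \( h_k^l \) (so \( (k,l)\neq(i,j) \)) also carries an internal vertex. Applying Claim~\ref{cla:cv} to each host separately, the first internal vertex is adjacent to the pole \( h^{j} \) (read as \( h^1 \) or \( h^2 \) according to \( j \)) and has degree \(3\), and the second is adjacent to \( h^{l} \) and has degree \(3\). Hence the two subdivisions contribute two extra pole-neighbours, one dangling from an edge incident to \( h^1 \) or \( h^2 \) at each of the two distinct hosts, and each such vertex has no neighbour outside its own subdivided path except its pole.

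First I would use the twelve automorphisms of \( \mathfrak{H} \) to normalise the configuration, fixing the first host to be \( h_1^1 \) and reducing the location of the second host \( h_k^l \) to a short list of representatives, organised by whether the two subdivided edges meet the same pole or opposite poles and by their rotational offset. The structural engine is the explicit \( K_{2,5} \) minor already carried by the frame: taking the two \emph{diagonal} branch sets \( \{h^1,h_1^1,h_1\} \) and \( \{h^2,h_2^2,h_3\} \) as centres, each of the five remaining branch vertices \( h_2^1,h_3^1,h_1^2,h_3^2,h_2 \) sends an edge to both centres. For each normalised configuration I would choose the pair of centres so that the two pole-attached subdivision vertices become a \emph{sixth} connector joining them; this upgrades the \( K_{2,5} \) minor to a \( K_{2,6} \) minor and contradicts the hypothesis that \( G \) has no \( K_{2,6} \) minor.

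The step I expect to be the genuine obstacle is this last placement. Subdividing an equatorial edge shifts a hub-adjacency off its ring vertex onto the new internal vertex, so the naive attempt merely \emph{trades} one connector for another and stalls at \( K_{2,5} \); getting a true sixth connector requires positioning both centres so that the two subdivision vertices are \emph{added} rather than substituted, which is exactly what fails when the two hosts are "far apart''. For those residual configurations I would fall back on the extremal choice of \( H \): using the second subdivision to reroute the first internal vertex (pushing it along the other subdivided path, or absorbing it into a hub branch set), I would exhibit a Herschel frame with strictly more free vertices, or one with an additional split satisfying the endpoint conditions on the spanning paths \( S_i \), contradicting the maximality of \( |V(G)\setminus V(H)| \) and then of the number of splits. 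Verifying this dichotomy uniformly over all representative cases—and in particular checking that each rerouting respects the split conditions and that the claimed \( K_{2,6} \) minors are realised—is where essentially all of the work resides.
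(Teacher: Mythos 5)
Your overall strategy is the same as the paper's: assume two equatorial paths at distinct hosts are subdivided, invoke Claim~\ref{cla:cv} to pin each internal vertex to its pole with degree $3$, reduce by symmetry, and exhibit a $K_{2,6}$ minor. The paper does exactly this and nothing else: by Claim~\ref{cla:cv} the configuration forces one of the graphs of Figure~\ref{fig:oo} as a minor, each of which contains $K_{2,6}$. The gap in your proposal sits precisely where you place your fallback. First, the premise of the fallback is false: the ``far apart'' configurations \emph{do} contain $K_{2,6}$ minors; they only resist your insistence on the fixed centres $\{h^1,h_1^1,h_1\}$ and $\{h^2,h_2^2,h_3\}$. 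For instance, if $\mu(h_1 h_1^1)$ has an internal vertex $v_1$ adjacent to $h^1$ and $\mu(h_3 h_2^2)$ has an internal vertex $v_2$ adjacent to $h^2$, take as centres the branch sets $\{h_2,h_1^1,h_2^2\}$ and $\{h_1,h_3,h_3^1,h_3^2\}$ (each connected, since the edges $h_2h_1^1$, $h_2h_2^2$, $h_1h_3^1$, $h_3^1h_3$, $h_3h_3^2$ are untouched by the two subdivisions); then $h^1$, $h^2$, $v_1$, $v_2$, $h_2^1$, $h_1^2$ are six disjoint connectors, each adjacent to both centres. The moral is that the two centres must be re-chosen in each symmetry class---sometimes a subdivision vertex is absorbed into a centre, sometimes both poles become connectors---and with that freedom \emph{every} configuration yields $K_{2,6}$, which is exactly what the paper's case analysis records.

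Second, and more seriously, the fallback itself cannot be carried out, so the cases you route to it are left unproved. Once Claim~\ref{cla:cv} applies, each of the two internal vertices has degree exactly $3$ and a fully determined neighbourhood: its two neighbours on its own path and its pole. The two internal vertices are therefore non-adjacent and (unless they share a pole) have no common neighbour, so ``pushing'' one along the other subdivided path uses an edge that does not exist. ``Absorbing'' an internal vertex into a pole's branch set is not available either: by definition a Herschel frame has singleton branch sets at every vertex of $\mathfrak{H}$ other than $h_1,h_2,h_3$, and even if one relaxed this, the move would change neither $|V(G)\setminus V(H)|$ nor the number of splits, so no contradiction with the extremal choice of $H$ arises. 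Nor can $v_1$ become the end of a new split $S_1$: that end would have to send edges to both $\mu(h_1h_1^1)$ and $\mu(h_1h_1^2)$, while the only neighbour of $v_1$ off its own path is $h^1$. The repair is to abandon the rerouting prong entirely and, as the paper does, produce an explicit $K_{2,6}$ minor (with configuration-dependent centres) in every residual class.
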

	\begin{proof}
		If the claim does not hold, then it follows from Claim~\ref{cla:cv} that $G$ contains one of the graphs depicted in Figure~\ref{fig:oo} as a minor, each of which contains a $K_{2,6}$ minor, which leads to a contradiction.
	\end{proof}
	
	\begin{figure}[!ht]
		\centering{%
			\includegraphics[scale=1]{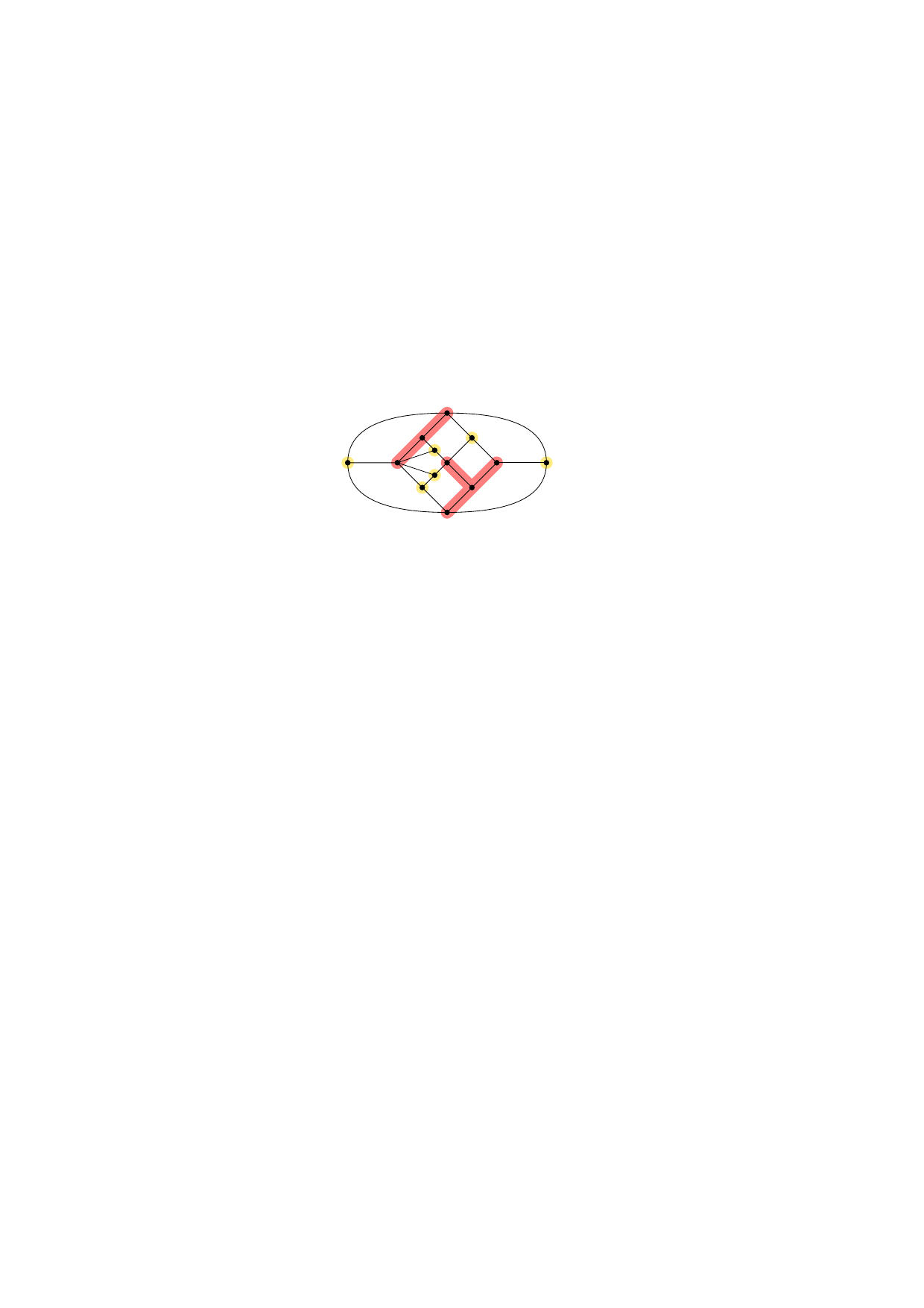}
		}
		\hfill
		{%
			\includegraphics[scale=1]{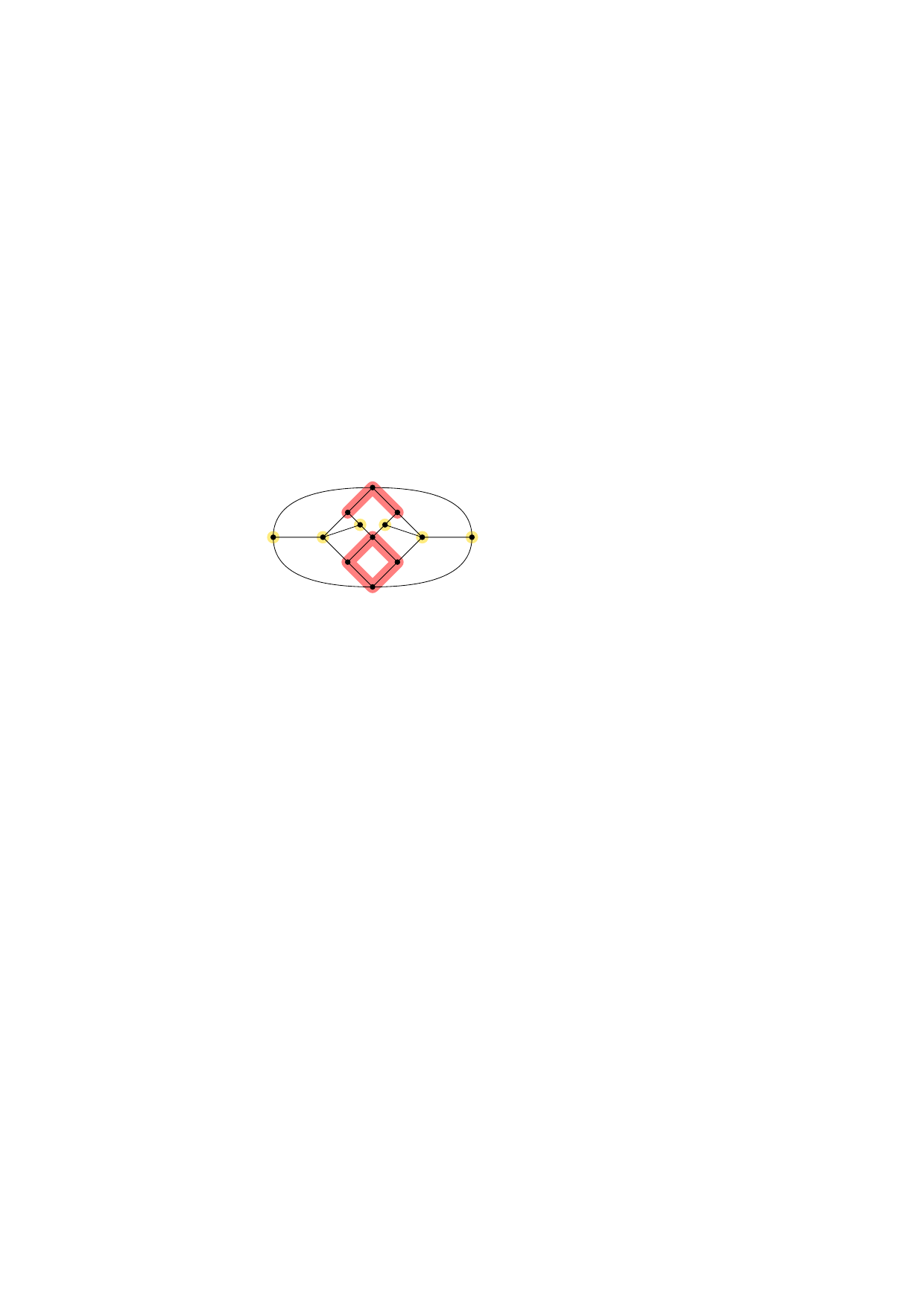}
		}
		\hfill
		{%
			\includegraphics[scale=1]{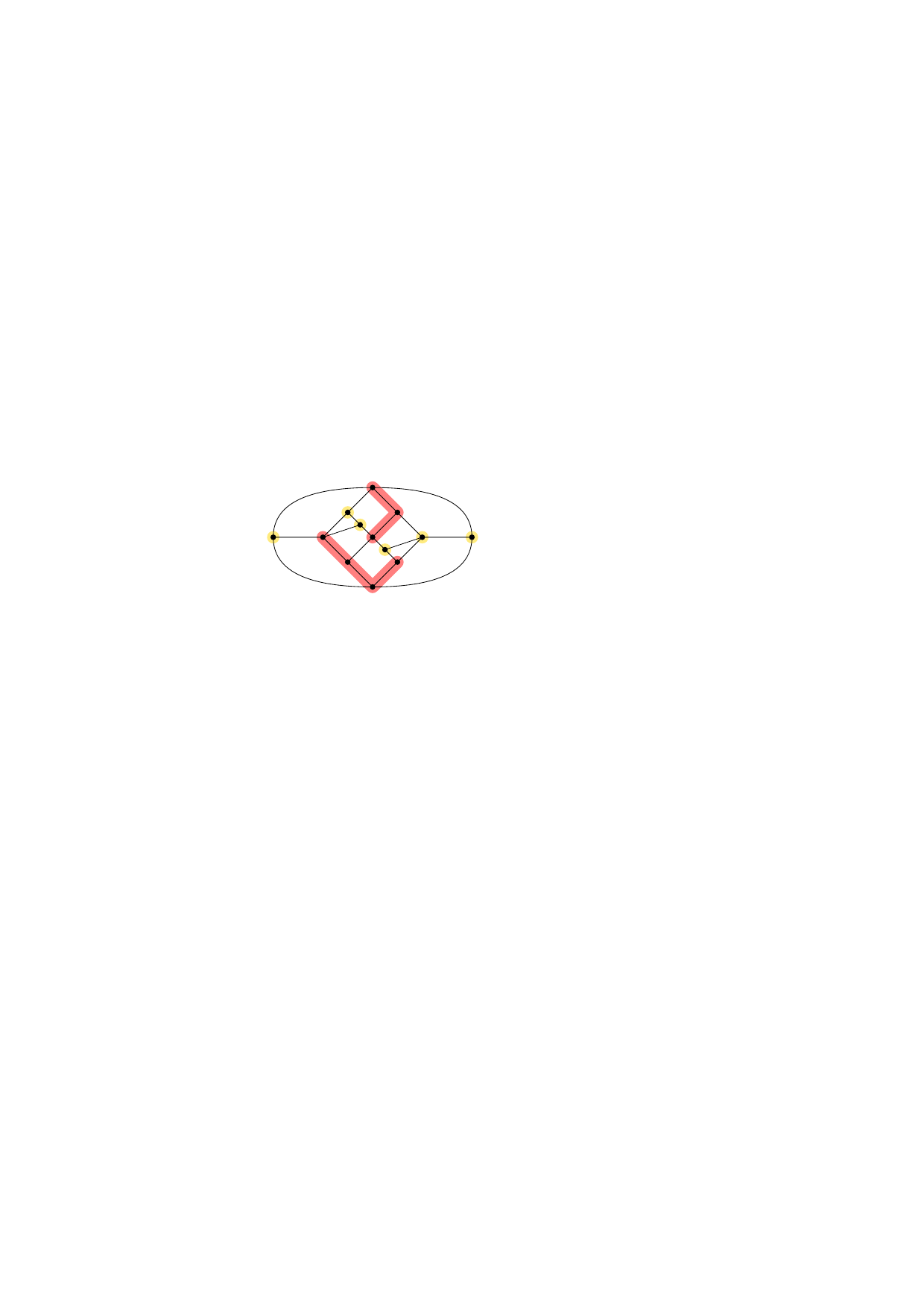}
		}
		\caption{Three graphs, each containing a \( K_{2,6} \) minor.}
		\label{fig:oo}
	\end{figure}
	
	\begin{claim} \label{cla:bl}
		If there is an edge joining an internal vertex of \( \mu(h^j h_i^j) \) and an internal vertex of \( \mu(h^j h_{i+1}^j) \) for some $i \in \{1, 2, 3\}$ and $j \in \{1, 2\}$, then each of \( \mu(h^j h_i^j) \) and \( \mu(h^j h_{i+1}^j) \) contains precisely one internal vertex, and \( \mu(h^j h_{i+2}^j) \) contains no internal vertex.
	\end{claim}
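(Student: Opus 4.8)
The plan is to argue by contraposition: assuming the edge exists but the conclusion fails, I will exhibit a $K_{2,6}$ minor, contradicting the hypothesis on $G$. By the symmetry of $\mathfrak{H}$ I may take $j=1$ and $i=1$, so the objects in play are the three paths $\mu(h^1 h_1^1)$, $\mu(h^1 h_2^1)$, $\mu(h^1 h_3^1)$ emanating from $h^1$, which are induced since we are in Case~1, together with an edge $ab$ joining an interior vertex $a$ of $\mu(h^1 h_1^1)$ to an interior vertex $b$ of $\mu(h^1 h_2^1)$.

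First I would localise $ab$ using planarity. Around $h^1$ the three paths occur in the cyclic order $\mu(h^1 h_1^1),\mu(h^1 h_2^1),\mu(h^1 h_3^1)$, and the three Herschel faces incident with $h^1$ are bounded by consecutive pairs of them; hence $ab$ lies in the face region between $\mu(h^1 h_1^1)$ and $\mu(h^1 h_2^1)$. Choosing $ab$ to be the such edge closest to $h^1$, the disc cut off between $h^1$ and $ab$ is bounded by the cycle $Z$ formed by the subpath of $\mu(h^1 h_1^1)$ from $h^1$ to $a$, the edge $ab$, and the subpath of $\mu(h^1 h_2^1)$ from $b$ to $h^1$; by the innermost choice its interior is empty, so $Z$ bounds a face of $G$.

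The three assertions then correspond to three ways the conclusion can fail, and I would rule out each as follows. Since $G$ is $3$-connected there is no vertex of degree~$2$, so every interior vertex of the three paths that is not incident with $ab$ must send an edge off its own path, and—because the disc bounded by $Z$ is a face—any edge leaving an interior vertex of $Z$ enters one of the two neighbouring faces $(h^1,h_2^1,h_3,h_3^1)$ or $(h^1,h_3^1,h_1,h_1^1)$. I would show: (i) if $a$ (or $b$) is not the first interior vertex of its path, the forced off-path edge on $Z$, combined with $ab$, contracts to a configuration that, as in the proofs of Claims~\ref{cla:cv} and~\ref{cla:oo}, contains a $K_{2,6}$ minor (cf.\ Figure~\ref{fig:HK26}); (ii) if $\mu(h^1 h_1^1)$ or $\mu(h^1 h_2^1)$ has a second interior vertex beyond $a$, respectively $b$, its forced neighbour together with $ab$ again yields such a minor; and (iii) once (i)–(ii) force $a=p_1$, $b=q_1$ and the triangular face $h^1 p_1 q_1$, an interior vertex $c$ of $\mu(h^1 h_3^1)$ would have a third neighbour which, with this triangle and the rest of the frame, completes a $K_{2,6}$ minor. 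Hence $p_1,q_1$ are the unique interior vertices, $h^1 p_1 q_1$ is a face, and $\mu(h^1 h_3^1)=h^1 h_3^1$, as claimed.

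The main obstacle I anticipate is the $K_{2,6}$ extraction common to (i)–(iii). The difficulty is that $\{h_1^1,h_2^1,h_3^1\}$ is a $3$-cut detaching $h^1$ from the rest of $G$, so the two branch sets of the desired $K_{2,6}$ cannot simply be taken ``above'' and ``below'' this cut; one branch set must straddle it, and it is exactly the chord $ab$ together with the forced off-path edge that supplies the extra parallel connections needed to raise the number of internally disjoint ribs from the generic three up to six. Verifying that every admissible planar placement of the forced edge genuinely yields six disjoint ribs—equivalently, matches one of the canonical $K_{2,6}$-bearing configurations after contraction—and dispatching the symmetric case $j=2$ and the cyclic relabellings of $i$, is where the real work lies.
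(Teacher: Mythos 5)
Your proposal has a genuine gap at its core. All three of your steps (i)--(iii) rest on the single assertion that each forced configuration ``contracts to a configuration that \ldots contains a $K_{2,6}$ minor, as in the proofs of Claims~\ref{cla:cv} and~\ref{cla:oo}.'' But those claims (and Figure~\ref{fig:HK26}) concern internal vertices of the \emph{side} paths $\mu(h_ih_i^j)$, $\mu(h_{i+1}h_i^j)$ and bad splits, not the \emph{spoke} paths $\mu(h^jh_i^j)$ at issue here; spokes and side edges lie in different orbits of the automorphism group of $\mathfrak{H}$, so nothing transfers by symmetry, and each landing spot of the forced off-path edge (on $h_1$, on $h_3^1$, on an interior vertex of the third spoke, and -- in your step (ii), where the vertex lies below $a$ and its edge may also enter the \emph{other} part of the quadrilateral face between the two spokes -- on $h_2$, on $b$, on $h_2^1$, or on a second chord) requires its own explicit $K_{2,6}$ extraction. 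You acknowledge that verifying these extractions ``is where the real work lies,'' which is to say the work is not done: the proposal is a plan whose central claims are unproven. (For what it is worth, spot checks suggest these configurations do all contain $K_{2,6}$ minors, so your all-$K_{2,6}$ strategy is probably salvageable, but only at the price of a long list of explicit branch-set constructions.)

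The paper avoids this case analysis entirely by using a tool your proposal never invokes: the extremal choice of the Herschel frame. Since $H$ was chosen to maximize $|V(G)\setminus V(H)|$ (which equals $0$ in Case~1), it suffices to \emph{reroute} the frame through the chord $v_1v_2$ -- e.g., shifting the hub from $h^1$ to $v_1$ or $v_2$ -- to exhibit a Herschel frame that misses a prescribed vertex, an immediate contradiction. Concretely: an internal vertex $u$ between $h^1$ and $v_1$ is avoided by the frame with hub $v_2$ and spokes $v_2v_1+[v_1,h_1^1]$, $[v_2,h_2^1]$, $[v_2,h^1]+\mu(h^1h_3^1)$; the same rerouting trick (placing the hub at $v_1$ or $v_2$) turns the statements about the lower parts of $\mu(h^1h_1^1)$, $\mu(h^1h_2^1)$ into the statement about $\mu(h^1h_3^1)$, which is the ``by symmetry'' reduction in the paper; and for an internal vertex $v$ of $\mu(h^1h_3^1)$ only one $K_{2,6}$ configuration (the first graph of Figure~\ref{fig:6}) is needed, to force $v$'s neighbor onto the lower spokes, after which rerouting again yields a frame avoiding $h^1$, $v_1$, or $v_2$. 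So the paper needs exactly one minor-extraction, where your approach needs roughly a dozen; supplying the missing maximality/rerouting idea (or else all the explicit $K_{2,6}$ constructions) is what your proof still requires.
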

	\begin{proof}
		Without loss of generality, we assume \( i = j = 1 \). Let $v_1$ be the internal vertex of \( \mu(h^1 h_1^1) \) and $v_2$ the internal vertex of \( \mu(h^1 h_{2}^1) \) such that $v_1 v_2 \in E(G)$. If the subpath of \( \mu(h^1 h_1^1) \) joining $h^1$ and $v_1$ or the subpath of \( \mu(h^1 h_2^1) \) joining $h^1$ and $v_2$ contains an internal vertex $u$, then $G - u$ contains a Herschel frame, which contradicts the minimality of $|V(H)|$.
		
		By symmetry, it suffices to show that \( \mu(h^1 h_{3}^1) \) contains no internal vertex. Suppose \( \mu(h^1 h_{3}^1) \) has an internal vertex \( v \). Since \( G \) does not contain the graph in Figure~\ref{fig:b1} as a minor, \( v \) must be adjacent to some vertex in the subpath of \( \mu(h^1 h_1^1) \) joining \( v_1 \) and \( h_1^1 \), or to the subpath of \( \mu(h^1 h_2^1) \) joining \( v_2 \) and \( h_2^1 \). It can be readily shown that $G - h^1$, $G - v_1$, or $G - v_2$ contains a Herschel frame, a contradiction.
	\end{proof}

\begin{figure}[!ht]
	\centering
	\includegraphics[scale = 1]{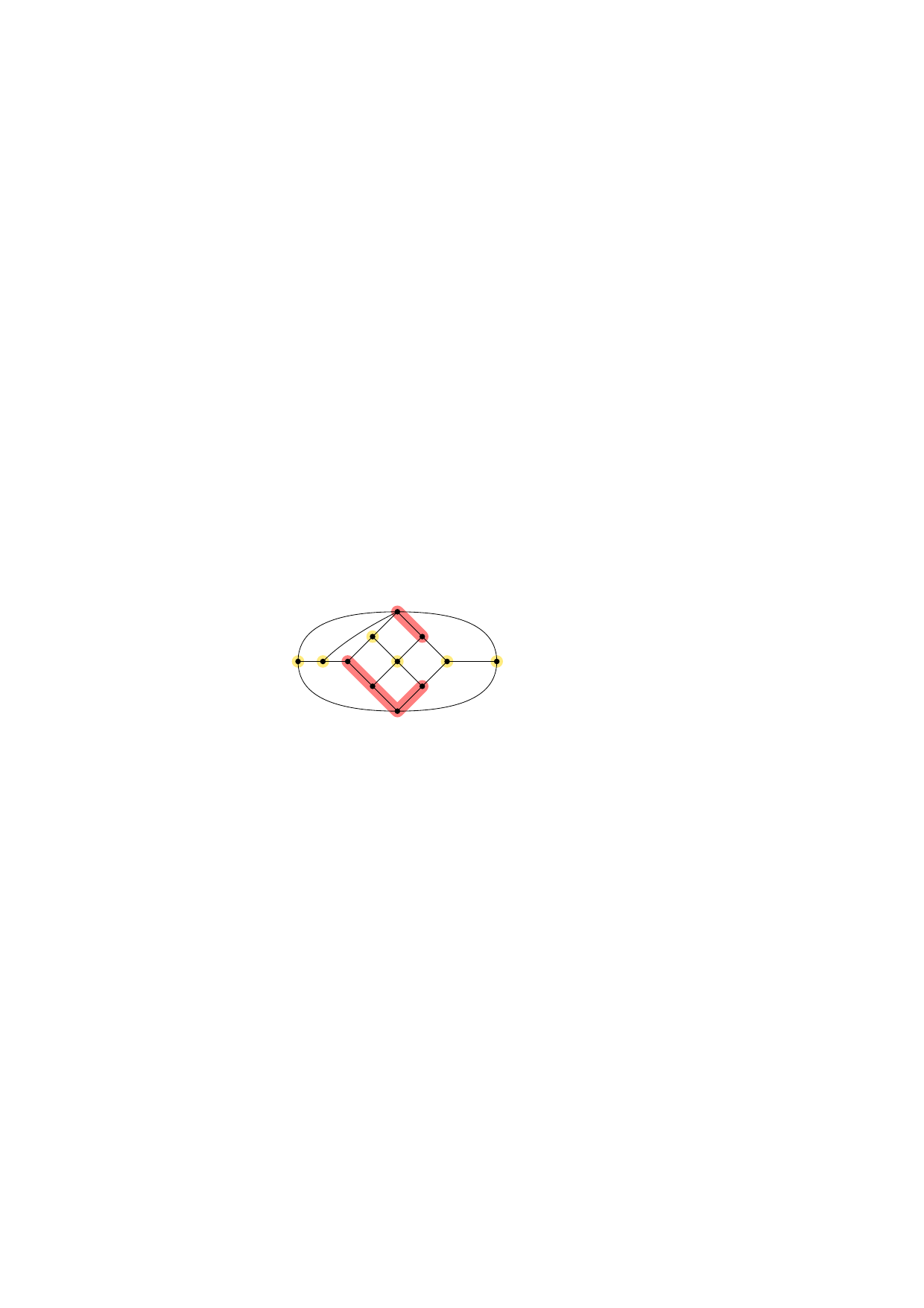}
	\caption{A graph containing a $K_{2,6}$ minor.}
	\label{fig:b1}
\end{figure}

	\begin{claim} \label{cla:iv}
		If \( \mu(h^j h_i^j) \) has an internal vertex $v$ for some $i \in \{1, 2, 3\}$ and $j \in \{1, 2\}$, then $v$ is adjacent to some vertex in \( V(\mu(h_{i+1}^j h^j h_{i+2}^j)) \setminus \{h^j\} \). Moreover, neither $\mu(h_k h_k^j)$ nor $\mu(h_k h_{k-1}^j)$ with $k \in \{1, 2, 3\}$ has an internal vertex, and at least one of \( \mu(h^j h_{i+1}^j) \) or \( \mu(h^j h_{i+2}^j) \) does not have an internal vertex.
	\end{claim}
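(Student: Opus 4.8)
The plan is to argue both conclusions by contradiction, producing a $K_{2,6}$ minor whenever one of them fails, in the same spirit as Claims~\ref{cla:cv}, \ref{cla:oo}, and~\ref{cla:bl}. First I would exploit the symmetry of $\mathfrak{H}$: since every automorphism either fixes $h^1$ or swaps it with $h^2$, and the three spokes at $h^1$ may be permuted arbitrarily, we may assume $i=j=1$, so that the spoke $\mu(h^1 h_1^1)$ carries an internal vertex. Under this normalization the two assertions read: (i) none of the six edges $\mu(h_k h_k^1)$, $\mu(h_k h_{k-1}^1)$ forming the cycle $h_1 h_1^1 h_2 h_2^1 h_3 h_3^1 h_1$ is subdivided; and (ii) the three spokes $\mu(h^1 h_1^1)$, $\mu(h^1 h_2^1)$, $\mu(h^1 h_3^1)$ are not all simultaneously subdivided.

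For conclusion (i), suppose some edge of that six-cycle has an internal vertex $v$. By Claim~\ref{cla:cv}, $v$ is adjacent to $h^1$ and has degree $3$, so $v$ is a common neighbor of $h^1$ with both a middle vertex $h_k$ and a rim vertex $h_k^1$. The key observation is that the configuration left \emph{safe} by Claim~\ref{cla:cv} (an internal vertex of a six-cycle edge attached only to $h^1$) is no longer safe once the incident spoke is itself subdivided: the subdivided spoke provides a second internally disjoint $h^1$--$h_k^1$ connection, and, together with the remaining cycle edges and the three spokes at $h^2$, this reproduces one of the $K_{2,6}$-bearing configurations of Figures~\ref{fig:6} and~\ref{fig:oo}. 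I would make this precise by contracting every subdivided spoke down to a single subdivision vertex, verifying that $G$ then contains $\mathfrak{H}$ together with a subdivided cycle edge whose midpoint is joined to $h^1$ and a subdivided spoke, and reading off $K_{2,6}$, contradicting the hypothesis on $G$.

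For conclusion (ii), suppose instead that all three spokes $\mu(h^1 h_i^1)$ carry internal vertices $w_1,w_2,w_3$. I would first apply Claim~\ref{cla:bl}: were any two of the $w_i$ joined by an edge, that claim would force the remaining spoke to have no internal vertex, contrary to the assumption; hence no edge joins internal vertices of two distinct spokes. Since $G$ is $3$-connected and each spoke is an induced path, every $w_i$ has a neighbor off its own spoke, and by the previous sentence this neighbor is not an internal vertex of another spoke, so it must lie at a middle vertex $h_k$, at a rim vertex, on the six-cycle, or on the $h^2$-side. A short case analysis over these positions then yields a $K_{2,6}$ minor in each case, again a contradiction.

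The main obstacle is the explicit construction of the $K_{2,6}$ minors in each configuration---selecting the two degree-$6$ branch sets and six pairwise internally disjoint connecting paths for the various positions of $v$ in part (i) and of the off-spoke neighbors of the $w_i$ in part (ii). The symmetry reduction together with Claims~\ref{cla:cv} and~\ref{cla:bl} is precisely what keeps this case analysis finite and short, since it pins down both the adjacency $v h^1$ and the absence of edges among the $w_i$.
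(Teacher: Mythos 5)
There is a genuine gap in your argument for the first assertion: the minors you propose to ``read off'' do not exist in general. The configuration you claim is $K_{2,6}$-bearing --- $\mathfrak{H}$ with the spoke $h^1h_1^1$ subdivided by a vertex $w$, a cycle edge subdivided by a vertex $v$, and the edge $vh^1$ --- contains a $K_{2,6}$ minor only when the subdivided cycle edge is incident with $h_1^1$ (i.e.\ it is $h_1h_1^1$ or $h_2h_1^1$). For the other four positions it is $K_{2,6}$-free. Concretely, take the cycle edge $h_3h_2^1$: the resulting $13$-vertex graph has no $K_{2,6}$ minor. Indeed, $w$ has degree $2$ with neighbours $h^1,h_1^1$, so any $K_{2,6}$ model either survives contracting $w$ into $h^1$, giving a $K_{2,6}$ minor in $\mathfrak{H}$ plus one subdivided cycle edge with its subdivision vertex joined to $h^1$ --- that graph is $\mathfrak{H}_{12}^\bullet$, a subgraph of $G_{12}^\bullet$, which Section~\ref{sec:K26c} shows is $K_{2,6}$-free --- or else $w$ is a singleton branch set of the $6$-side, forcing $h^1$ and $h_1^1$ into the two hub branch sets $A_1,A_2$. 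In the latter case a short check shows only four further spokes can be found: every connected $A_2\ni h_1^1$ with at least five neighbours must contain $h_1$ or $h_2$, and then the branch set that is to contain $h_1^2$ (or $h^2$) has all its neighbours inside $A_2$ and the other designated spoke vertices, so it can never reach the hub containing $h^1$; one obtains $K_{2,5}$ but never $K_{2,6}$. The same happens for the cycle edges $h_2h_2^1$, $h_1h_3^1$, $h_3h_3^1$. Note also that none of the graphs in Figure~\ref{fig:6} or Figure~\ref{fig:oo} matches this configuration (they all involve subdivided \emph{cycle} edges with chords to $h_2$, to another cycle path, or to the hubs --- never a subdivided spoke), so the case analysis you appeal to cannot be completed.

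This failure is exactly why the paper does not prove the first statement by exhibiting a forbidden minor. Its proof uses the extremal choice of the Herschel frame: by $3$-connectivity, planarity and the Figure~\ref{fig:6} minors, the internal vertex $v$ of $\mu(h^1h_1^1)$ must be adjacent to some $u\in V(\mu(h^1h_2^1))\setminus\{h^1\}$ (up to symmetry); using the edge $uv$ one builds a \emph{second} Herschel frame in which $v$ occupies the position of $h^1$ while the six cycle paths $\mu(h_kh_k^1)$, $\mu(h_kh_{k-1}^1)$ are left unchanged. Applying Claim~\ref{cla:cv} to both frames then forces any internal vertex of a cycle path to be adjacent to both $h^1$ and $v$, impossible for a vertex of degree $3$. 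This re-rooting argument, together with the maximality of $|V(G)\setminus V(H)|$ (which is also what the paper uses to settle the second assertion in the case $u=h_2^1$), is the ingredient missing from your proposal, and it cannot be replaced by direct minor extraction. Your treatment of the second assertion is closer to viable --- once Claim~\ref{cla:bl} and the Figure~\ref{fig:6} minors force the off-spoke neighbours to be rim vertices, the resulting configurations with all three spokes subdivided do contain $K_{2,6}$ --- but as written it, too, substitutes an unverified case analysis for the frame-exchange argument that the paper actually needs.
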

	\begin{proof}
		Without loss of generality, we assume \( i = j = 1 \). 
		We can also assume that \( v \) is adjacent to some vertex \( u \) in \( V(\mu(h^1 h_2^1 h_2 h_1^1)) \setminus \{h^1, h_1^1\} \). In fact, \( u \in V(\mu(h^1 h_2^1)) \setminus \{h^1\} \), as otherwise, \( G \) would contain the graph in Figure~\ref{fig:b1} as a minor. It is straightforward to use the edge $uv$ to construct another Herschel frame, without changing the paths $\mu(h_k h_k^1)$ and $\mu(h_k h_{k-1}^1)$ for $k \in {1,2,3}$, and with $v$ replacing $h^1$. Therefore, by Claim~\ref{cla:cv}, if any of these six paths contains an internal vertex \( w \), then \( w \) must have degree 3 and be adjacent to both \( h^1 \) and \( v \), which is impossible.
		
		If \( u = h_2^1 \), then \( \mu(h^1 h_2^1) \) has no internal vertex, as there exists a Herschel frame that does not contain any internal vertices of \( \mu(h^1 h_2^1) \). If \( u \neq h_2^1 \), then, by Claim~\ref{cla:bl}, \( \mu(h^1 h_3^1) \) has no internal vertex. This justifies that at least one of \( \mu(h^1 h_2^1) \) or \( \mu(h^1 h_3^1) \) does not have an internal vertex.
	\end{proof}

We are now ready to apply the preceding claims to show that $G$ contains either $\mathfrak{H}_n^\bullet$, $\mathfrak{H}_n^\circ$, or $\mathfrak{H}_{15}$ as a spanning subgraph.

Assume first that for every $i \in \{1,2,3\}$ and $j \in \{1,2\}$, the path $\mu(h^j h_i^j)$ has no internal vertex. By Claim~\ref{cla:oo}, we may assume that if $\mu(e)$ has an internal vertex for some $e \in E(\mathfrak{H})$, then $e \in \{h_3h_3^2,h_1h_3^2\}$. By Claim~\ref{cla:cv}, every internal vertex of $\mu(h_3h_3^2)$ or $\mu(h_1h_3^2)$ is adjacent to $h^2$. Therefore, $G$ contains $\mathfrak{H}_n^\bullet$ as a spanning subgraph, where the path $\mu(h_3h_3^2h_1)$ corresponds to the path $h_3h_3^2 \dots h_{n-8}^2h_1$ shown in Figure~\ref{subfig:K26f1}.

Next, assume that for some $i \in \{1,2,3\}$ and $j \in \{1,2\}$ there exists an edge joining an internal vertex of $\mu(h^j h_i^j)$ and an internal vertex of $\mu(h^j h_{i+1}^j)$, and that for every $k \in \{1,2,3\}$ the path $\mu(h^{3-j} h_k^{3-j})$ has no internal vertex. Without loss of generality, assume $i=j=1$. By Claims~\ref{cla:oo},~\ref{cla:bl}, and~\ref{cla:iv}, we may assume that if $\mu(e)$ has an internal vertex for some $e \in E(\mathfrak{H})$, then $e \in \{h^1h_1^1,h^1h_2^1,h_3h_3^2,h_1h_3^2\}$, and each of $\mu(h^1h_1^1)$ and $\mu(h^1h_2^1)$ contains exactly one internal vertex. By Claim~\ref{cla:cv}, every internal vertex of $\mu(h_3h_3^2)$ or $\mu(h_1h_3^2)$ is adjacent to $h^2$. Therefore, $G$ contains $\mathfrak{H}_n^\circ$ as a spanning subgraph, where the path $\mu(h_3h_3^2h_1)$ corresponds to the path $h_3h_3^2 \dots h_{n-10}^2h_1$ shown in Figure~\ref{subfig:K26f2}.

Now assume that for some $i,k \in \{1,2,3\}$ there exists an edge joining an internal vertex of $\mu(h^1 h_i^1)$ and an internal vertex of $\mu(h^1 h_{i+1}^1)$, and another edge joining an internal vertex of $\mu(h^2 h_k^2)$ and an internal vertex of $\mu(h^2 h_{k+1}^2)$. Then Claims~\ref{cla:bl} and~\ref{cla:iv} imply that $G$ contains $\mathfrak{H}_{15}$ as a spanning subgraph.

Without loss of generality, it remains to consider the case where $\mu(h^1h_1^1)$ contains an internal vertex $v$ adjacent to both $h^1$ and $h_2^1$. Let $P$ be the union of $\mu(h_2^1h^1h_3^1)$, the edge $h_2^1v$, and $\mu(h^1h_1^1h_2)-h^1$. Then $P$ is a path joining $h_2$ and $h_3^1$.

Suppose first that for any $i \in \{1,2,3\}$ the path $\mu(h^2 h_i^2)$ has no internal vertex. By Claims~\ref{cla:iv} and~\ref{cla:oo}, for every $k \in \{1,2,3\}$ the path $\mu(h_kh_k^1h_{k+1})$ has length $2$, and at most one of $\mu(h_1h_1^2h_2)$, $\mu(h_2h_2^2h_3)$, and $\mu(h_3h_3^2h_1)$ has length greater than $2$. Consequently, the union of $P$ and $\mu(h_2h_1^2h_1h_3^2h^2h_2^2h_3h_3^1)$, the union of $P$ and $\mu(h_2h_2^2h_3h_3^2h^2h_1^2h_1h_3^1)$, or the union of $P$ and $\mu(h_2h_2^2h^2h_1^2h_1h_3^2h_3h_3^1)$ forms a Hamilton cycle, which is a contradiction.

Finally, assume that for some $i \in \{1,2,3\}$ the path $\mu(h^2 h_i^2)$ has an internal vertex. Then Claim~\ref{cla:iv} implies that for any $j\in\{1,2\}$ and any $k \in \{1,2,3\}$ the path $\mu(h_kh_k^jh_{k+1})$ has length $2$, and at least one of $\mu(h^2h_1^2)$, $\mu(h^2h_2^2)$, and $\mu(h^2h_3^2)$ has no internal vertex. Hence one of the three cycles described above is a Hamilton cycle of $G$, again yielding a contradiction.

	\medskip
	\noindent \textbf{Case 2.} Suppose \( H \) is a spanning subgraph of \( G \) with exactly one split. Without loss of generality, let \( h_2 \) be the split.
	
	We aim to show that \( G \) contains \( \mathfrak{H}_{13} \) as a spanning subgraph. To do so, we will first show that \( G \) contains a subdivision of \( \mathfrak{H}_{13} \) and then confirm that this subdivision has no internal vertices. We begin with the following claims.

	\begin{claim} \label{cla:bs}  
		For any \( j \in \{1, 2\} \), none of \( \mu(h_1 h_1^j) \), \( \mu(h_3 h_2^j) \), or \( \mu(h^j h_3^j) \) contains an internal vertex.  
	\end{claim}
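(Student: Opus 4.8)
The plan is to cut the six paths down to two by symmetry, and then to run the off-path-neighbour analysis of Claims~\ref{cla:cv} and~\ref{cla:iv}, the new ingredient being that the split $S_2$ forces, in the cases that would otherwise survive, either a second split or a Hamilton cycle.

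The subgroup of $\mathrm{Aut}(\mathfrak{H})$ fixing $h_2$ has order four: it is generated by the reflection $\rho$ interchanging $h_1 \leftrightarrow h_3$ and $h_1^j \leftrightarrow h_2^j$ (fixing $h_2$, $h_3^j$, $h^1$, $h^2$) and by the superscript swap $\tau$ interchanging $h^1 \leftrightarrow h^2$ and $h_i^1 \leftrightarrow h_i^2$ (fixing $h_1, h_2, h_3$). Each of these sends a split at $h_2$ to a split at $h_2$, hence maps Herschel frames to Herschel frames and preserves both maximality conditions in the choice of $H$. Under this group the six paths $\mu(h_1 h_1^j)$, $\mu(h_3 h_2^j)$, $\mu(h^j h_3^j)$ fall into two orbits, represented by $\mu(h_1 h_1^1)$ and $\mu(h^1 h_3^1)$, so it suffices to show that neither representative carries an internal vertex.

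Consider $\mu(h_1 h_1^1)$ and suppose it has an internal vertex $v$. As the path is induced and $G$ is $3$-connected, $v$ has a neighbour $u$ off the path, whose location is restricted as in Claim~\ref{cla:cv}. If $u$ sits on one of the paths yielding a graph of Figure~\ref{fig:6}, then $G$ has a $K_{2,6}$ minor, a contradiction. If $u$ lies on the parallel path $\mu(h_1 h_1^2)$, the re-routing of Claim~\ref{cla:cv} produces a spanning Herschel frame split at both $h_1$ and $h_2$, i.e.\ with two splits, contradicting the maximality of the number of splits. The only surviving possibility is $u = h^1$ with $v$ of degree $3$ --- the point at which Case~2 departs from Case~1. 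Here the split supplies the extra routing freedom to build a spanning cycle: with $a, b$ the endpoints of $S_2$ adjacent to $\{h_1^1, h_1^2\}$ and $\{h_2^1, h_2^2\}$ respectively, the closed walk
\[
h^1\, v\, h_1^1\, a\, h_1^2\, h^2\, h_2^2\, b\, h_2^1\, h_3\, h_3^2\, h_1\, h_3^1\, h^1
\]
is a Hamilton cycle of $G$ when $S_2$ is a single edge, contradicting non-hamiltonicity.

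The representative $\mu(h^1 h_3^1)$ is treated by the same three-way split, the admissible positions of the off-path neighbour now being governed by Claim~\ref{cla:iv} together with Figures~\ref{fig:6} and~\ref{fig:oo}: each either forces a $K_{2,6}$ minor, permits a re-routing creating a second split, or reduces to a hub-adjacent configuration closed off, as above, by a Hamilton cycle through $S_2$. The principal obstacle is exactly this Hamiltonian step. In Case~1 the subdivisions of $\mu(h_1 h_1^1)$ and $\mu(h^1 h_3^1)$ are allowed and give rise to $\mathfrak{H}_n^\bullet$, $\mathfrak{H}_n^\circ$ and $\mathfrak{H}_{15}$; it is only the split $S_2$ that turns such a subdivision into a spanning cycle. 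The delicate part is to confirm that a Hamilton cycle (or, when $S_2$ is longer and its internal vertices carry the extra neighbours forced by $3$-connectivity, a $K_{2,6}$ minor or a frame improvement) can be produced for every admissible shape of $S_2$ and every position of $u$ --- a verification the symmetries $\rho$ and $\tau$ substantially shorten.
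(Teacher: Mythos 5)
Your symmetry reduction is exactly the paper's: the stabilizer of $h_2$ in $\mathrm{Aut}(\mathfrak{H})$ cuts the six paths down to the two representatives $\mu(h_1 h_1^1)$ and $\mu(h^1 h_3^1)$, and your treatment of the off-path neighbour $u$ when it lies on $\mu(h_1 h_1^2)$ (second split) or on the paths of Figure~\ref{fig:6} ($K_{2,6}$ minor) matches the paper. The divergence, and the gap, is in the surviving case $u = h^1$. The paper does \emph{not} construct a Hamilton cycle there; it observes that the split $S_2$ itself turns this configuration into a $K_{2,6}$ minor (the first graph of Figure~\ref{fig:H13}): contracting $S_2$ to an edge $ab$, one can take, for instance, branch sets $A_1 = \{h_3, h_2^1, h^1\}$ and $A_2 = \{h^2, h_1^2, a, h_1\}$, with terminals $v, h_1^1, b, h_2^2, h_3^2, h_3^1$. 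This argument is local: it needs no information about the lengths of $S_2$ or of any other path of the frame. Your Hamilton cycle, by contrast, is only a cycle of $G$ under assumptions you have not established and cannot establish at this point of the proof: it skips the interior of $S_2$, the subpath of $\mu(h_1 h_1^1)$ from $h_1$ to $v$, and the paths $\mu(h_1 h_1^2)$, $\mu(h^1 h_2^1)$, $\mu(h_3 h_3^1)$, $\mu(h_3 h_2^2)$, $\mu(h^2 h_3^2)$, all of which may have internal vertices at this stage.

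Moreover, the Hamilton-cycle strategy is not merely incomplete in this case; it is provably unworkable. Claim~\ref{cla:S2} later shows $S_2$ must have an internal vertex adjacent to a hub, so the configuration you must rule out includes the graph obtained from $\mathfrak{H}_{13}$ by subdividing $h_1 h_1^1$ with a vertex $v$ joined to $h^1$. That $14$-vertex graph has no Hamilton cycle at all: $\mathfrak{H}_{13}$ minus the edge $h_1 h_1^1$ is bipartite with parts of sizes $6$ and $7$, so any Hamilton path of it must end in two vertices of the $7$-side, whereas the two cycle-neighbours of $v$ would have to be two of $h_1, h_1^1, h^1$, of which at most one lies on the $7$-side. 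So "the split supplies the extra routing freedom to build a spanning cycle" is false precisely in the case that actually occurs; only a minor-based argument (the paper's Figure~\ref{fig:H13}) can dispose of it, and your parenthetical fallback ("a $K_{2,6}$ minor or a frame improvement") defers exactly that missing content. The same criticism applies to your sketch for the second representative $\mu(h^1 h_3^1)$: the problematic configuration there is an internal vertex adjacent to an internal vertex of a neighbouring hub path (the Claim~\ref{cla:bl} situation, allowed in Case~1), which you do not address concretely, and the Case~1 claims you invoke (Claim~\ref{cla:iv}, Figure~\ref{fig:oo}) were proved without a split present; the paper instead kills it with the second graph of Figure~\ref{fig:H13}.
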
  
	\begin{proof}  
		By symmetry, it suffices to prove the claim for \( \mu(h_1 h_1^1) \) and \( \mu(h^1 h_3^1) \).  
		
		Suppose, for the sake of contradiction, that \( \mu(h_1 h_1^1) \) contains an internal vertex. Using reasoning similar to that in the proof of Claim~\ref{cla:cv}, one can readily deduce that either there exists a Herschel frame with one more split, or \( G \) contains the first or third graph in Figure~\ref{fig:6}, or the first graph in Figure~\ref{fig:H13} as a minor, which is a contradiction.  
		
		Now suppose \( \mu(h^1 h_3^1) \) contains an internal vertex. In this case, it is straightforward to verify that \( G \) contains either the graph in Figure~\ref{fig:b1} or the second graph in Figure~\ref{fig:H13} as a minor. Consequently, \( G \) would contain \( K_{2,6} \) as a minor, which again leads to a contradiction.  
	\end{proof}

	\begin{figure}[!ht]
		\centering{%
			\includegraphics[scale=1]{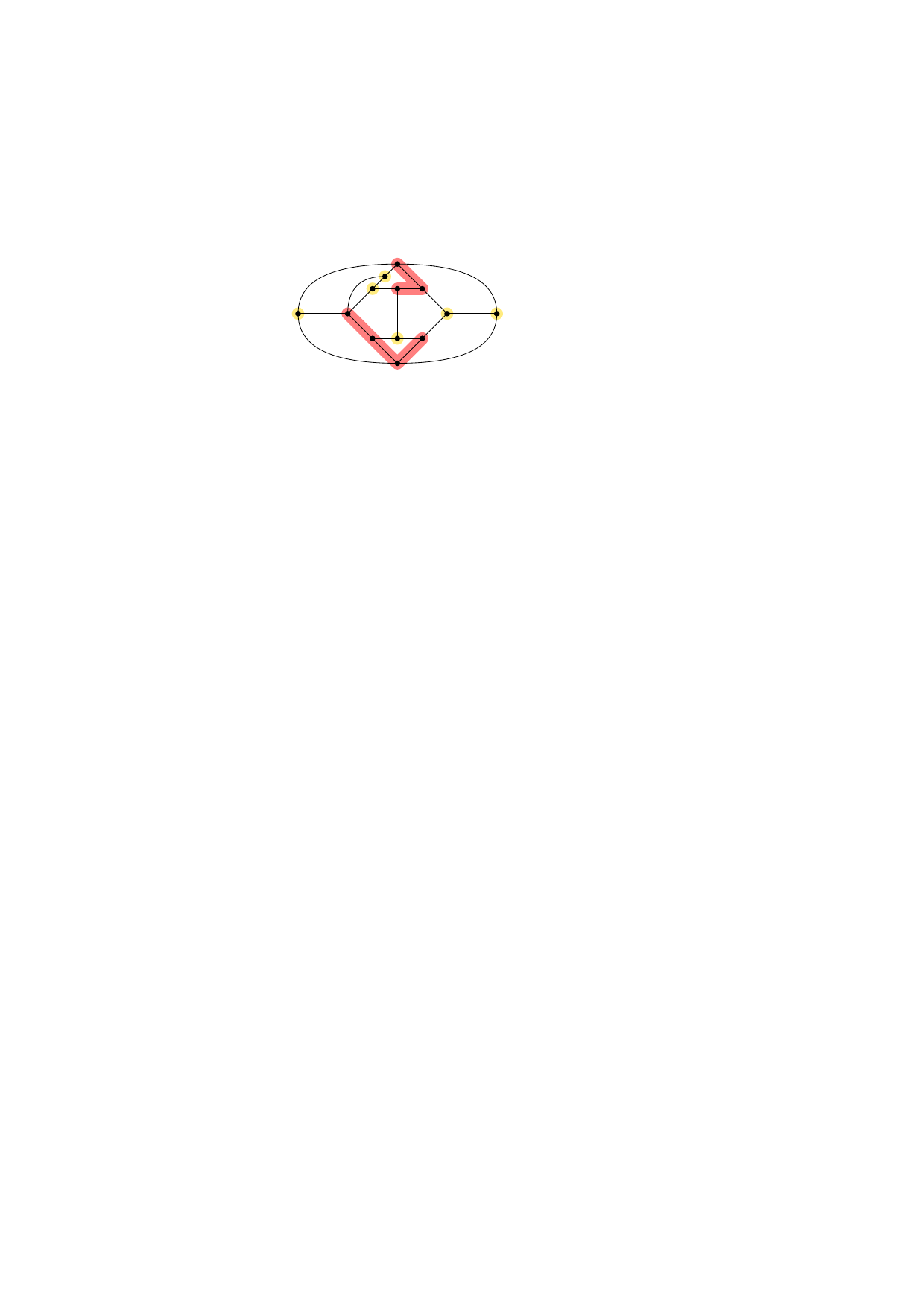}
		}
		\hspace{30pt}
		{%
			\includegraphics[scale=1]{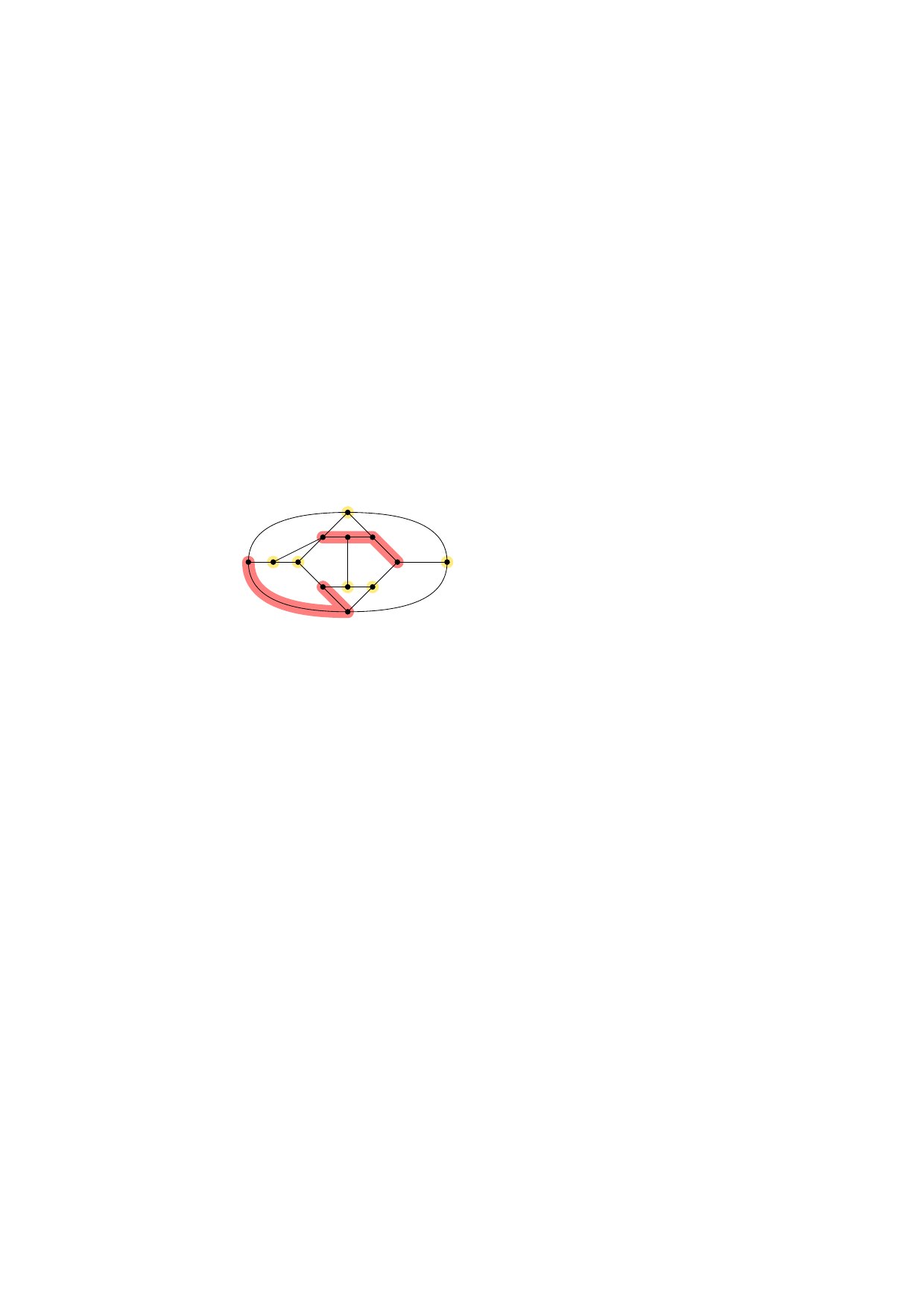}
		}
		\caption{Two graphs, each containing a \( K_{2,6} \) minor.}
		\label{fig:H13}
	\end{figure}
	
	\begin{claim} \label{cla:S2}
		The path $S_2$ contains some internal vertex.
	\end{claim}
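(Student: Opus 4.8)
The plan is to argue by contradiction: suppose $S_2$ has no internal vertex, so that $\mu(h_2)=\{a,b\}$ is the single edge $ab$, where the forced split structure makes $a$ adjacent to the paths $\mu(h_2h_1^1),\mu(h_2h_1^2)$ and $b$ adjacent to $\mu(h_2h_2^1),\mu(h_2h_2^2)$; in particular each of $a,b$ has degree $3$. I would then exhibit a Hamilton cycle of $G$, contradicting that $G$ is non-hamiltonian. The underlying idea is that doubling the vertex $h_2$ of $\mathfrak{H}$ into the edge $ab$ destroys the parity obstruction (bipartite graph of odd order) that makes $\mathfrak{H}$ itself non-hamiltonian, so a spanning cycle should become available.

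Concretely, I would assemble the cycle out of frame paths by traversing, in order, $\mu(h_2h_1^1)$, $\mu(h_1h_1^1)$, $\mu(h_1h_3^1)$, $\mu(h^1h_3^1)$, $\mu(h^1h_2^1)$, $\mu(h_2h_2^1)$, $\mu(h_2h_2^2)$, $\mu(h_3h_2^2)$, $\mu(h_3h_3^2)$, $\mu(h^2h_3^2)$, $\mu(h^2h_1^2)$, and $\mu(h_2h_1^2)$. This visits $a,h_1^1,h_1,h_3^1,h^1,h_2^1,b,h_2^2,h_3,h_3^2,h^2,h_1^2$ and returns to $a$, covering each of $a$ and $b$ exactly once while simply leaving the split edge $ab$ unused. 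It is a genuine Hamilton cycle of $G$ precisely when every frame path it does \emph{not} traverse is a single edge. By Claim~\ref{cla:bs} the paths $\mu(h_1h_1^2)$ and $\mu(h_3h_2^1)$ are already edges, so the only potential obstructions are internal vertices on $\mu(h^1h_1^1)$, $\mu(h^2h_2^2)$, $\mu(h_1h_3^2)$, and $\mu(h_3h_3^1)$.

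The crux, and the step I expect to be the main obstacle, is ruling out internal vertices on these four paths. Here I would first invoke the symmetry of $\mathfrak{H}$ that fixes the split at $h_2$ (swapping the superscripts, and swapping $h_1\leftrightarrow h_3$ together with $a\leftrightarrow b$); applying it to the cycle above yields a second spanning cycle whose untraversed unknown paths, namely $\mu(h^2h_1^2),\mu(h^1h_2^1),\mu(h_1h_3^1),\mu(h_3h_3^2)$, are disjoint from the first four, so no single path can block both cycles simultaneously at the same place. When an untraversed path still does contain an internal vertex $v$, I would argue exactly as in Claim~\ref{cla:cv} that $v$ has degree $3$ with a prescribed chord to $h^1$ or $h^2$; this chord either allows rerouting the cycle through $v$, or forces one of the $K_{2,6}$ configurations used earlier, or produces a Herschel frame with an extra split or with an uncovered vertex—each of which contradicts our maximal choice of frame or the $K_{2,6}$-freeness of $G$. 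In every case $G$ is hamiltonian, the desired contradiction, so $S_2$ must contain an internal vertex.
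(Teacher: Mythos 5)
Your overall strategy---assume $S_2$ has no internal vertex, exhibit a Hamilton cycle built from frame paths, and conclude a contradiction---is exactly the paper's strategy, but your execution has two genuine gaps. First, the symmetry you invoke does not do what you claim. The automorphism you describe (swapping superscripts together with $h_1\leftrightarrow h_3$ and $a\leftrightarrow b$) maps your traversed edge set to itself: under it $h_2h_1^1\mapsto h_2h_2^2$, $h_1h_1^1\mapsto h_3h_2^2$, $h_1h_3^1\mapsto h_3h_3^2$, $h^1h_3^1\mapsto h^2h_3^2$, and so on, all of which are edges of your cycle. Hence it produces no ``second spanning cycle'' at all; the untraversed set $\{h^1h_1^1,\, h^2h_2^2,\, h_1h_3^2,\, h_3h_3^1,\, h_1h_1^2,\, h_3h_2^1\}$ is invariant under that map. (The automorphism that would yield the second cycle you want, with unknowns $\mu(h^2h_1^2),\mu(h^1h_2^1),\mu(h_1h_3^1),\mu(h_3h_3^2)$, is the superscript swap alone, fixing $h_1,h_2,h_3$ and both ends of the split.)

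Second, and more seriously, even after repairing the symmetry the residual case---one blocked path from each cycle's quadruple of unknowns---is precisely the hard part, and your treatment of it is a gesture rather than an argument. The claims you lean on (Claim~\ref{cla:cv} and its relatives) were proved in Case~1 under the no-split hypothesis, and the decisive subcase they cannot settle is the one where the internal vertex's chord goes to an adjacent spoke of the same hub: that chord merely re-routes the frame, creating no uncovered vertex, no extra split, and no $K_{2,6}$ minor, so none of your three listed contradictions applies. Concretely, the configuration ``split at $h_2$ plus both $h^1$-spokes carrying an internal vertex with an edge between them'' (one unknown from each of your two cycles) is $K_{2,6}$-free and consistent with all maximality choices; it can only be excluded by constructing yet another Hamilton cycle, which you do not do. The paper avoids this entire difficulty by choosing a better cycle---one whose untraversed paths include only \emph{two} unknowns, $\mu(h^1h_1^1)$ and $\mu(h_1h_3^1)$, the rest being handled by Claim~\ref{cla:bs}---and by introducing a supplementary extremal assumption (minimizing the total length of those two paths with $S_2$ fixed) under which any re-routing chord yields a frame with strictly smaller total length, a contradiction. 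Some device of this kind is indispensable; without it your ``rerouting'' case cannot be closed.
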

	\begin{proof}
		
		Subject to the minimality of $|V(H)|$ and the maximality of the number of splits, we impose an additional assumption: the sum of the lengths of \( \mu(h^1 h_1^1) \) and \( \mu(h_1 h_3^1) \) is minimized while the path $S_2$ remains unchanged. We claim that neither \( \mu(h^1 h_1^1) \) nor \( \mu(h_1 h_3^1) \) contains an internal vertex.
		
		Suppose \( \mu(h^1 h_1^1) \) contains an internal vertex \( v \). Since \( G \) does not contain the graph in Figure~\ref{fig:b1} as a minor, \( v \) must be adjacent to a vertex in \( V(\mu(h_2^1 h^1 h_3^1)) \setminus \{h^1\} \). However, this would imply the existence of a Herschel frame satisfying the given conditions but with a smaller sum of the lengths of \( \mu(h^1 h_1^1) \) and \( \mu(h_1 h_3^1) \) while the path $S_2$ remains the same, which contradicts our assumption.
		
		Similarly, if \( \mu(h_1 h_3^1) \) contains an internal vertex, it follows that \( G \) either contains the first or third graph in Figure~\ref{fig:6} as a minor, or there exists a Herschel frame with one additional split, or with a smaller sum of the lengths of \( \mu(h^1 h_1^1) \) and \( \mu(h_1 h_3^1) \) while $S_2$ remains unchanged. All these scenarios contradict the assumptions. 
		
		Thus, we conclude that neither \( \mu(h^1 h_1^1) \) nor \( \mu(h_1 h_3^1) \) has an internal vertex.
		
		Observe that the path \( S_2 \) must contain an internal vertex; otherwise, by Claim~\ref{cla:bs} and the previous conclusion, 
$\mu(h^1 h_3^1 h_3 h_3^2 h_1 h_1^1 h_2 h_1^2 h^2 h_2^2 h_2 h_2^1 h^1)$ would be a Hamilton cycle of $G$.
	\end{proof}
	
	We now impose an additional assumption: the length of \( S_2 \) is at least one and is minimized, which is consistent with the other assumptions. By Claim~\ref{cla:S2}, \( S_2 \) contains an internal vertex \( v \). Since the length of $S_2$ is minimized, it follows that \( v \) must be adjacent to either \( h^1 \) or \( h^2 \). This ensures that \( G \) contains a spanning subdivision of \( \mathfrak{H}_{13} \). It remains to show that this subdivision contains no internal vertices.

We remark that the additional assumption in the previous paragraph is used only to obtain a subdivision of $\mathfrak{H}_{13}$, and we do not retain it in the subsequent discussion. On the other hand, it is clear that the subdivision of $\mathfrak{H}_{13}$ contains three distinct Herschel frames, each of which has exactly one split and is a spanning subgraph of $G$. In particular, Claim~\ref{cla:bs} holds for each of these Herschel frames.

We assume that \( v \) is adjacent to \( h^2 \). 
	
	Let \( S_2' \) be the subpath of \( S_2 \) joining \( v \) and the common end-vertex of \( S_2 \) and \( \mu(h_1^1 h_2) \), and let \( S_2'' \) be the subpath of \( S_2 \) joining \( v \) and the common end-vertex of \( S_2 \) and \( \mu(h_2^1 h_2) \). 
	
	By the symmetry of \( \mathfrak{H}_{13} \) and by Claim~\ref{cla:bs}, it remains to verify that \( \mu(h^1 h_1^1) \) does not contain any internal vertex. Suppose it does, and let \( u \) be an internal vertex of \( \mu(h^1 h_1^1) \) adjacent to \( h^1 \). As in the previous arguments, \( u \) must be adjacent to some vertex not in \( \mu(h^1 h_1^1) \). It is not difficult to show that \( u \) must be adjacent to \( h_3^1 \), since otherwise \( G \) would contain either the graph in Figure~\ref{fig:b1} or the second graph in Figure~\ref{fig:H13} as a minor, leading to a contradiction. By symmetry, if \( S_2' \) has an internal vertex, this vertex must be adjacent to \( h_1^2 \). Therefore, we may assume that \( S_2' \) has no internal vertex (by adjusting the Herschel frame). 
	
	We now consider the union of 
	the path \( \mu(h_3^1 h^1 h_2^1 h_2) \), \( S_2'' \), the edge \( v h^2 \), the path \( \mu(h^2 h_2^2 h_3 h_3^2 h_1 h_1^2 h_2 h_1^1 h^1) - h^1 \), and the edge \( u h_3^1 \). This is a Hamilton cycle of \( G \), which leads to a contradiction. Therefore, \( G \) must contain a spanning subgraph isomorphic to \( \mathfrak{H}_{13} \), as claimed.
	
	\medskip
	\noindent \textbf{Case 3.} Suppose \( H \) is a spanning subgraph of \( G \) and has precisely two splits. Without loss of generality, let \( h_1 \) and \( h_3 \) be the splits. We will show that \( G \) contains a Hamilton cycle, thereby proving that this case cannot occur. We additionally assume that the sum of the lengths of the paths $\mu(h_2 h_i^j)$ with $i \in \{1, 2\}$ and $j \in \{1, 2\}$ is minimized.
	
	We will prove the following claims, showing that certain specific paths have no internal vertices, which aids in finding a Hamilton cycle.
	
	\begin{claim} \label{cla:bss}
		For any $i \in \{1, 2\}$ and any $j \in \{1, 2\}$, the path $\mu(h_2 h_i^j)$ does not contain any internal vertex. For any $j \in \{1, 2\}$, the path $\mu(h^j h_3^j)$ does not contain any internal vertex. None of $S_1$ or $S_3$ contains any internal vertex.
	\end{claim}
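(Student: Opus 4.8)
The plan is to prove the three assertions in turn, cutting the work down with symmetry first. Recall that $H$ is the union of the frame paths; it is spanning and $G$ arises from $H$ by adding chords, and by the maximality of $|V(G)\setminus V(H)|$ every frame path and every $S_i$ is an induced path of $G$. The Case~3 configuration (splits at $h_1,h_3$, unsplit $h_2$) is preserved by the automorphism $\alpha$ of $\mathfrak{H}$ that interchanges $h_1\leftrightarrow h_3$ and $h_1^j\leftrightarrow h_2^j$ while fixing $h_2$, $h_3^1$, $h_3^2$, $h^1$, and $h^2$, as well as by the pole-swap $\beta$ with $h^1\leftrightarrow h^2$, $h_i^1\leftrightarrow h_i^2$, and every gate fixed. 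Under the group $\langle\alpha,\beta\rangle$ the four edges $\mu(h_2 h_i^j)$ constitute a single orbit, the two edges $\mu(h^1 h_3^1),\mu(h^2 h_3^2)$ constitute a single orbit, and $S_1,S_3$ constitute a single orbit. Pre-composing $\mu$ with $\alpha$ or $\beta$ as needed, it therefore suffices to show that $\mu(h_2 h_1^1)$, $\mu(h^1 h_3^1)$, and $S_1$ each have no internal vertex.

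In every case I would assume the path under consideration has an internal vertex $v$; since it is induced and $\deg_G(v)\ge 3$, the vertex $v$ carries a chord $vu$ with $u$ off that path, and the argument is a finite case analysis on where $u$ sits relative to the frame. For $\mu(h_2 h_1^1)$ I would lean on the minimality of the sum of the lengths of the four paths $\mu(h_2 h_i^j)$ together with the prohibition against a third split. If $u$ lies on $\mu(h_2 h_1^2)$, splicing in $vu$ turns the initial segment of $\mu(h_2 h_1^1)$ into a valid split at $h_2$, producing a third split and hence a $K_{2,6}$ minor; if $u$ lies on $\mu(h_2 h_2^1)$ or $\mu(h_2 h_2^2)$, the analogous move yields a split whose end joins the wrong pair of frame paths, which again forces $K_{2,6}$ as in Figure~\ref{fig:HK26}; and any length-reducing reroute is excluded by minimality. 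If instead $u$ lies on a path incident to $h_1^1$ or farther away, the configuration contains one of the graphs of Figure~\ref{fig:6}, hence $K_{2,6}$. All outcomes contradict the hypotheses, so $\mu(h_2 h_1^1)$ is internally empty.

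For $\mu(h^1 h_3^1)$ I would run the same scheme used for this path in Claim~\ref{cla:bs}: an internal vertex with its chord either permits a Herschel frame with an extra split, or exhibits one of the graphs in Figure~\ref{fig:6} or Figure~\ref{fig:H13} as a minor. Since each of these contains $K_{2,6}$ and $G$ has neither a $K_{2,6}$ minor nor more than two splits, no internal vertex can exist.

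The genuinely new and hardest part is the splits themselves, where the usual ``create one more split'' move is unavailable because $S_1$ is already a split. Here I would take an internal vertex $v$ of $S_1=\mu(h_1)$ with a chord $vu$, $u\notin S_1$, and classify $u$ against the four frame paths leaving $\mu(h_1)$, namely $\mu(h_1 h_1^1),\mu(h_1 h_1^2),\mu(h_1 h_3^1),\mu(h_1 h_3^2)$. When $u$ lies on one of these, I would reroute $S_1$ to end at $v$ and splice the detour through $vu$ into the incident frame path; a short computation shows this yields a valid Herschel frame with two splits but strictly more vertices outside it, contradicting the maximality of $|V(G)\setminus V(H)|$ (in the degenerate subcase where no vertex is freed, the reroute instead closes a spanning cycle, contradicting non-hamiltonicity). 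When $u$ lies on none of these four paths, the resulting attachment produces a $K_{2,6}$ minor. The main obstacle will be exactly this bookkeeping: verifying in each subcase that the rerouted object is a bona fide Herschel frame, that its split ends join the prescribed pairs of frame paths, and that the splice really drops a vertex or closes a genuine Hamilton cycle; the symmetry reduction above is what keeps the number of subcases finite and tractable.
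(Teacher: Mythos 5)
Your symmetry computation and your handling of the two edge-orbits essentially track the paper: the paper likewise reduces to $\mu(h_2 h_1^1)$ and $\mu(h^1 h_3^1)$, disposes of an internal vertex of $\mu(h_2 h_1^1)$ by rerouting (either the sum of the lengths of the four $\mu(h_2 h_i^j)$ drops, or one gets an extra/wrongly-attached split and hence a $K_{2,6}$ minor), and disposes of $\mu(h^1 h_3^1)$ by exhibiting a forbidden minor (the paper cites the first graphs of Figure~\ref{fig:6} and Figure~\ref{fig:t} rather than Figure~\ref{fig:H13}, since the frame now has two splits, but contracting $S_3$ reduces to the one-split configurations, so this difference is harmless). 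You are also right---and more scrupulous than the paper, which asserts that these two representatives suffice ``by the symmetry of the Herschel frame''---that $S_1,S_3$ form a third orbit requiring its own argument.

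The genuine gap is in that third argument. Take $v$ internal on $S_1$ with a chord $vu$ where $u$ lies on $\mu(h_1 h_1^1)$ and is adjacent, along that path, to the end-vertex $s_1$ of $S_1$. Your reroute (new $S_1$ the subpath of $S_1$ from $s_1'$ to $v$; new $\mu(h_1 h_1^1)$ the edge $vu$ followed by the rest of $\mu(h_1 h_1^1)$; and, forcedly, new $\mu(h_1 h_1^2)$ the subpath of $S_1$ from $v$ to $s_1$ followed by the old $\mu(h_1 h_1^2)$) frees no vertex and closes no cycle: it simply produces another valid spanning two-split Herschel frame with the same number of outside vertices, the same number of splits, and the same total length of the $\mu(h_2 h_i^j)$. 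None of the extremal conditions in force penalizes this frame, so neither branch of your dichotomy (``strictly more vertices outside'' / ``closes a spanning cycle'') applies and no contradiction results; the same degeneracy arises for each of the four paths incident to $\mu(h_1)$. Repairing this requires an additional tie-breaking condition (for instance, among optimal frames also minimize $|V(S_1)|+|V(S_3)|$, after which the reroute strictly shortens $S_1$), which you never impose. Your remaining branch is also unsound: a chord from $v$ to $h^1$ (or to an internal vertex of $\mu(h^1 h_1^1)$) is planarly possible, admits no reroute (three frame paths meet at $h^1$ and a single chord cannot redirect them), and the natural contractions yield a graph---$\mathfrak{H}$ with $h_1$ split into a three-vertex path, $h_3$ split, plus the edge from the middle split vertex to $h^1$---that one can check has no $K_{2,6}$ minor: unlike the configurations of Figure~\ref{fig:6}, the extra vertex $v$ is absorbed into a branch set instead of supplying a sixth leg. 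So ``when $u$ lies on none of these four paths, the resulting attachment produces a $K_{2,6}$ minor'' fails as stated, and this part of the claim needs an idea not present in your proposal.
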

	\begin{proof}
		By the symmetry of the Herschel frame, it suffices to prove the claim for the paths \( \mu(h_2 h_1^1) \) and \( \mu(h^1 h_3^1) \).
		
		If \( \mu(h_2 h_1^1) \) contains an internal vertex, then there exists a Herschel frame such that either the sum of the lengths of the paths \( \mu(h_2 h_i^j) \) with \( i \in \{1, 2\} \) and \( j \in \{1, 2\} \) is smaller, or there is one additional split. Either possibility contradicts the maximality assumptions.
		
		If \( \mu(h^1 h_3^1) \) contains an internal vertex, then \( G \) would contain either the graph in Figure~\ref{fig:b1} or the first graph in Figure~\ref{fig:t} as a minor, both of which imply a \( K_{2,6} \) minor, which is impossible.
	\end{proof}

	\begin{figure}[!ht]
		\centering{%
			\includegraphics[scale=.75]{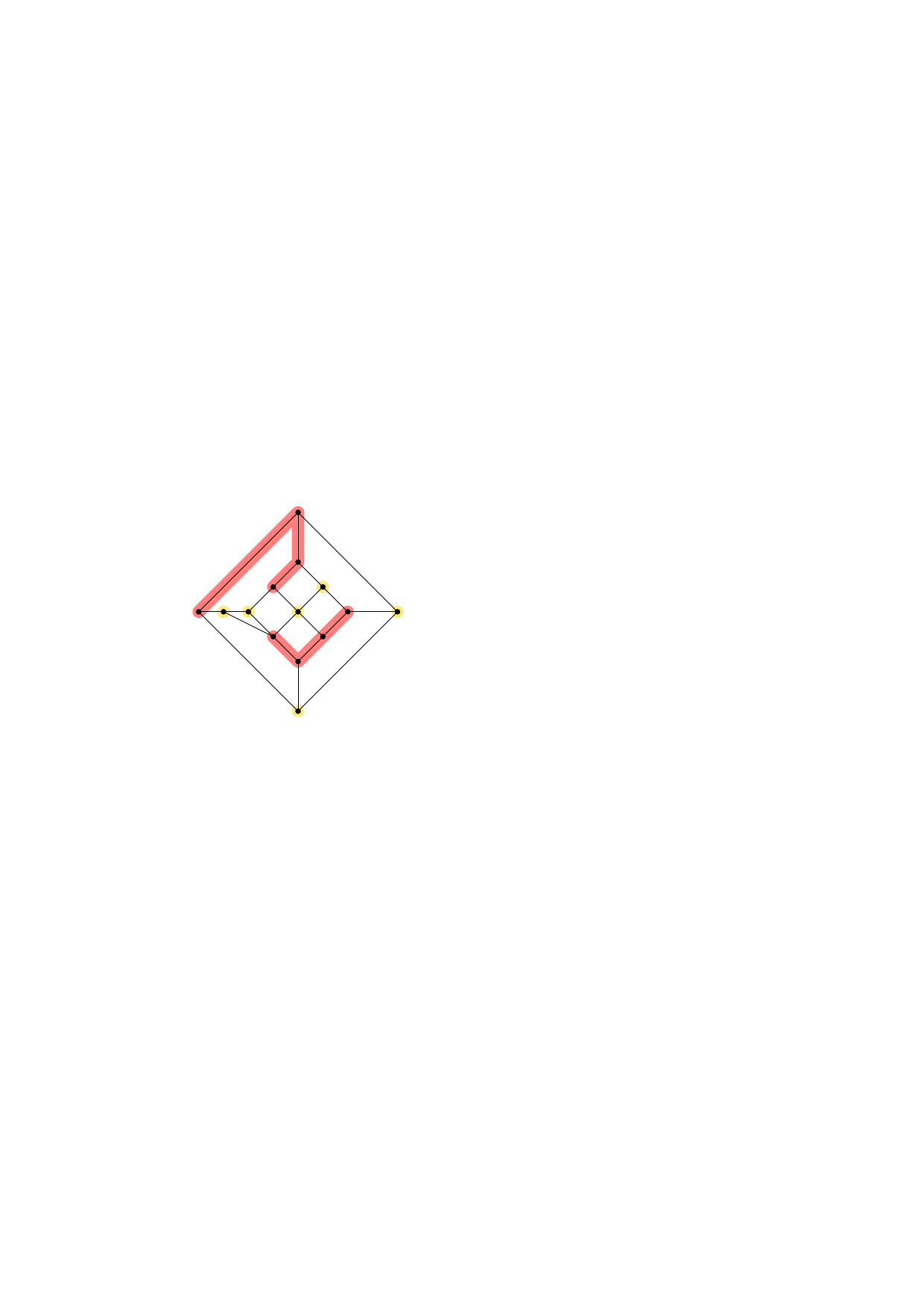}
		}
		\hfill
		{%
			\includegraphics[scale=.75]{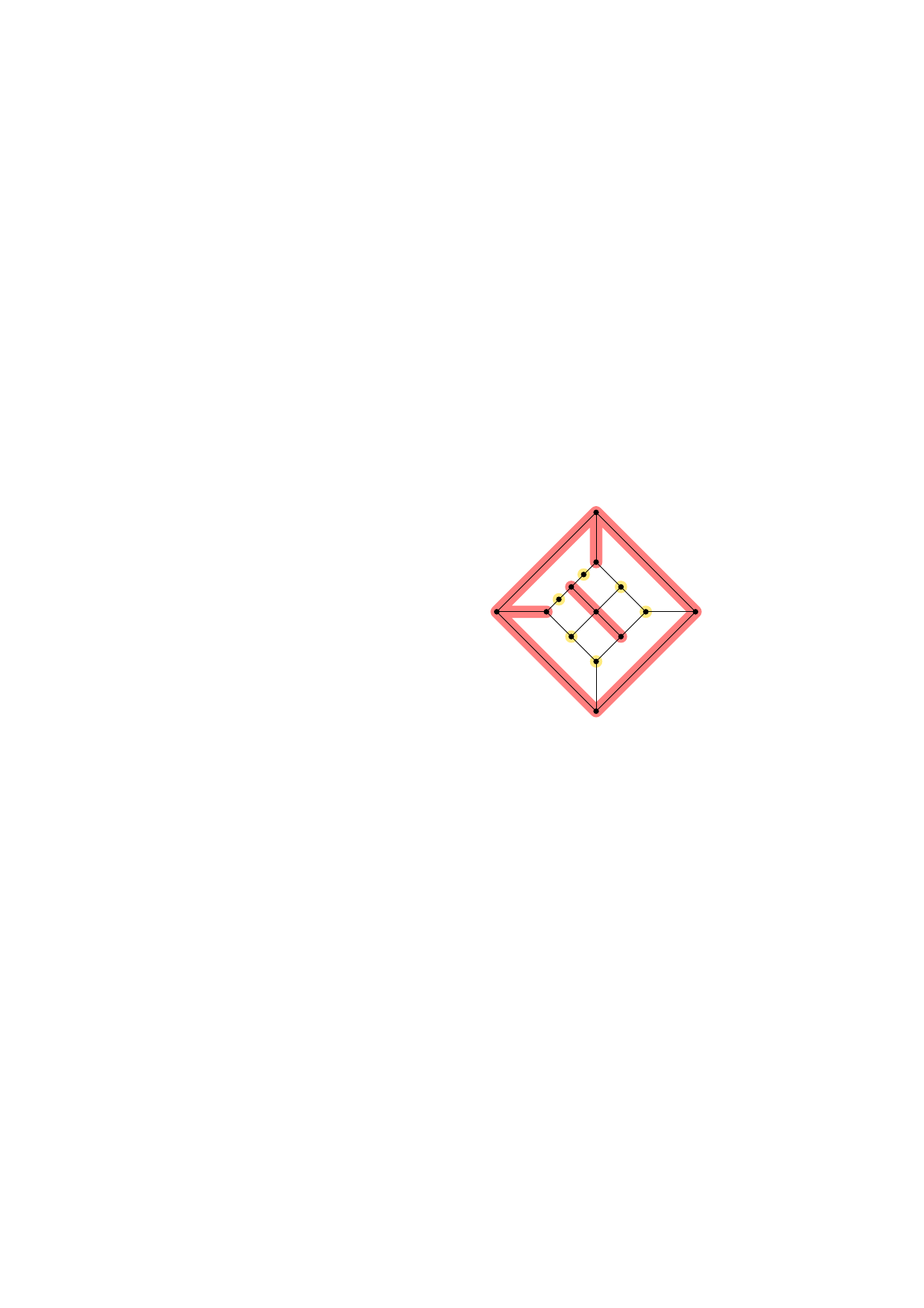}
		}
		\hfill
		{%
			\includegraphics[scale=.75]{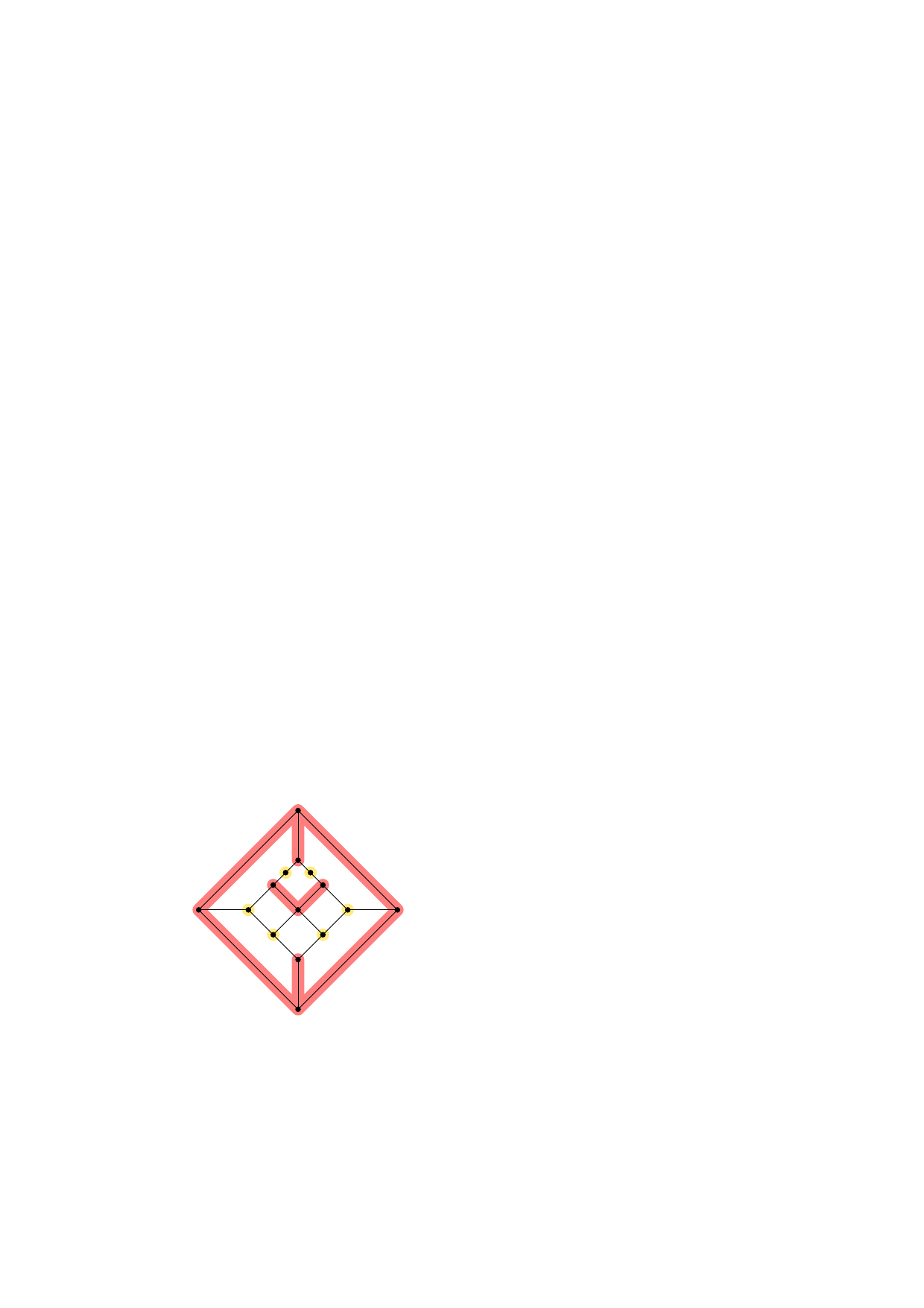}
		}
		\hfill
		{%
			\includegraphics[scale=.75]{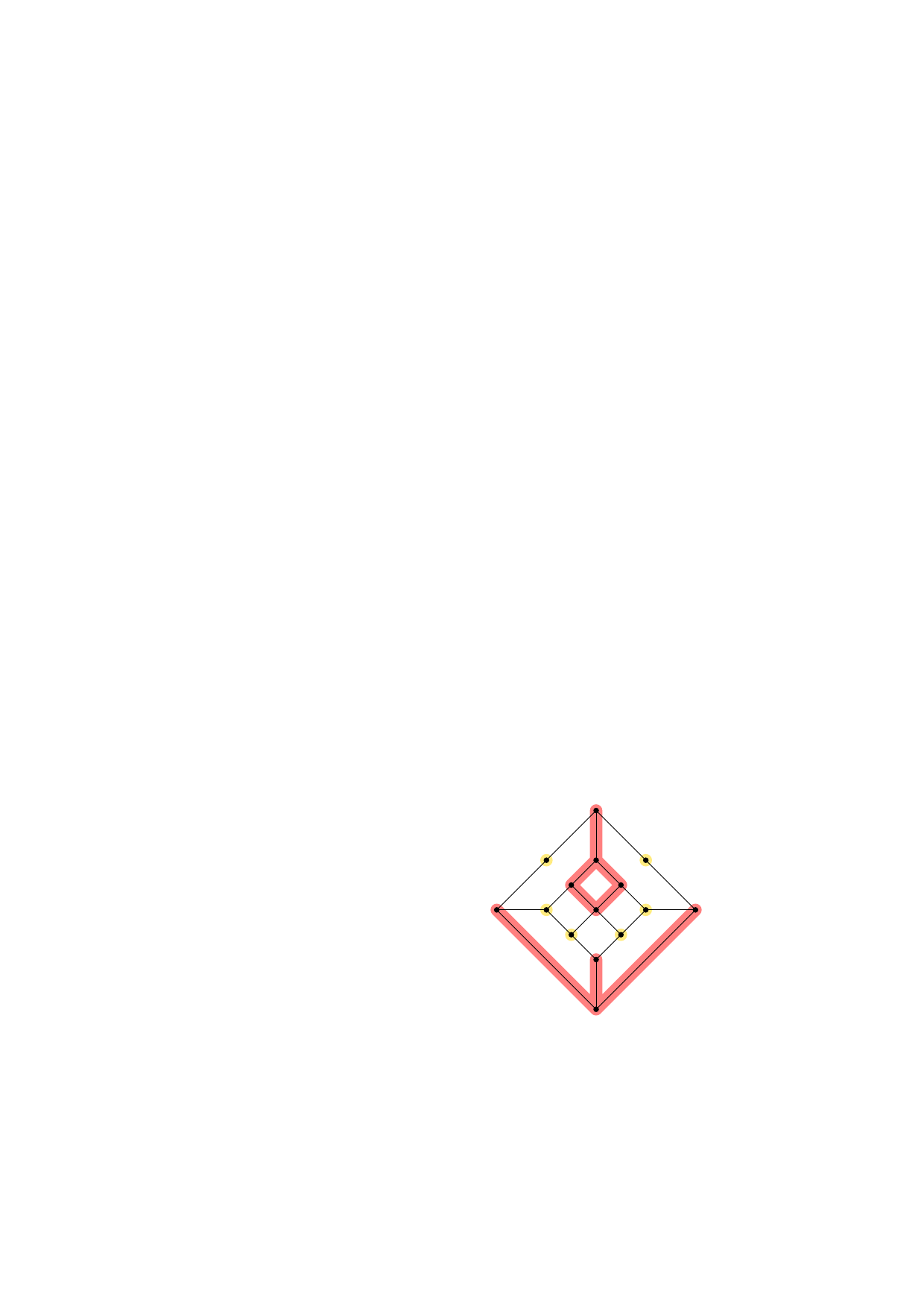}
		}
		\caption{Four graphs, each containing a \( K_{2,6} \) minor.}
		\label{fig:t}
	\end{figure}
	
	\begin{claim} \label{cla:tw}
		At least one of $\mu(h_1 h_1^1)$ or $\mu(h^1 h_1^1)$ does not have any internal vertex. At least one of $\mu(h_1 h_1^1)$ or $\mu(h_1 h_1^2)$ does not have any internal vertex. At least one of $\mu(h_1 h_3^1)$ or $\mu(h_1 h_3^2)$ does not have any internal vertex.
	\end{claim}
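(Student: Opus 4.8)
The plan is to treat the three assertions uniformly and to argue, in the spirit of the proofs of Claim~\ref{cla:cv} and Claim~\ref{cla:bss}, that if two of the named paths simultaneously carry internal vertices, then $G$ acquires a $K_{2,6}$ minor, contradicting our standing hypothesis. Throughout I would exploit the structure already secured in Case~3: by Claim~\ref{cla:bss} the splits $S_1,S_3$ are single edges, so $\mu(h_1)=\{a_1,b_1\}$ and $\mu(h_3)=\{c_1,d_1\}$, and the six paths $\mu(h_2 h_i^j)$ and $\mu(h^j h_3^j)$ are single edges. Contracting every spoke and grouping the frame into the two ``stars'' $\{h^1,h_1^1,h_2^1,h_3^1\}$ and $\{h^2,h_1^2,h_2^2,h_3^2\}$ already displays a $K_{2,5}$ whose five middle parts are $a_1,b_1,h_2,c_1,d_1$ (each adjacent to the first star on the superscript-$1$ side and to the second on the superscript-$2$ side, along the frame). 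The whole point of the claim is that a \emph{sixth} such middle part must be prevented, since its presence would upgrade this $K_{2,5}$ to a $K_{2,6}$.

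For the first assertion I would suppose that both $\mu(h_1 h_1^1)$ and $\mu(h^1 h_1^1)$ have internal vertices; these two induced paths share the endpoint $h_1^1$. Choosing an internal vertex on each and invoking $3$-connectivity, every such vertex has a neighbour off its own path. The key step is to locate those neighbours: using planarity of the embedded frame together with the extremal choice of $H$ (maximum $|V(G)\setminus V(H)|$, then maximum number of splits, then minimum $\sum_{i,j}|\mu(h_2 h_i^j)|$), I would show that each placement either permits a local rerouting that strictly improves one of these quantities — contradicting the extremal choice — or else realises, around $h_1^1$, one of the gadget graphs of Figure~\ref{fig:t} (equivalently Figure~\ref{fig:6}) as a minor of $G$, each of which contains $K_{2,6}$.

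The second and third assertions concern the two paths leaving a common endpoint of the split $S_1$: the vertex $b_1$ for $\mu(h_1 h_1^1),\mu(h_1 h_1^2)$, and the vertex $a_1$ for $\mu(h_1 h_3^1),\mu(h_1 h_3^2)$. I would run the same scheme: assume both paths of the pair carry internal vertices, pick one internal vertex on each, and apply $3$-connectivity. Here the favourable branch is that the two extra edges let one reroute the frame so that a sixth bridge appears — for the second assertion this manifests as a genuine split at $h_2$, and for the third as an interaction with the already-present split $S_3$ — which is precisely the obstruction excluded by the bound of two splits, and which yields $K_{2,6}$ outright. In the remaining placements the local configuration is again one of the graphs in Figure~\ref{fig:t}, giving a $K_{2,6}$ minor. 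The analogous statements with $h_3$ in place of $h_1$ then follow from the automorphism of the frame interchanging $h_1\leftrightarrow h_3$ and $h_1^j\leftrightarrow h_2^j$ while fixing $h_2$, $h_3^j$ and $h^j$, which preserves the split set $\{h_1,h_3\}$.

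The main obstacle I anticipate lies in ``locating the extra neighbours''. A priori each internal vertex can attach to several nearby spokes or bags, and for every such attachment one must decide between the rerouting argument and the gadget-minor argument; the three extremal conditions on $H$ — and in particular the minimality of $\sum_{i,j}|\mu(h_2 h_i^j)|$, imposed at the start of Case~3 for exactly this reason — are what make the rerouting branch go through. The delicate point is that it is the two internal vertices \emph{together}, not either one alone, that force the sixth middle part (a single internal vertex on a superscript-$1$ spoke is pushed by $3$-connectivity onto the superscript-$1$ side and so cannot become a bridge by itself); verifying the planar compatibility of the resulting $K_{2,6}$ is where the care is needed, and the symmetry of the Herschel frame is what keeps the number of genuinely distinct sub-cases small.
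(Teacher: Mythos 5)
There is a genuine gap here: the entire content of your argument is deferred to an unexecuted case analysis, and the mechanism you propose is not the one that makes the claim true. The paper's proof of this claim uses no $3$-connectivity, no location of neighbours of the internal vertices, and no rerouting against the extremal choices: in Case~3 the frame already has the two splits $S_1, S_3$, and as soon as \emph{both} paths of one of the three pairs carry internal vertices, the frame \emph{by itself} (frame edges only, no further edges of $G$) contains $K_{2,6}$ as a minor; this is exactly what the second, third and fourth graphs of Figure~\ref{fig:t} record. Concretely, for the first pair, keep your notation ($b_1$ the end of $S_1$ meeting $\mu(h_1h_1^1), \mu(h_1h_1^2)$; $a_1$ the end meeting $\mu(h_1h_3^1), \mu(h_1h_3^2)$; $c_1, d_1$ the ends of $S_3$ meeting $\mu(h_3h_2^j)$ and $\mu(h_3h_3^j)$ respectively), and let $X$, $Y$ denote the nonempty interiors of $\mu(h_1h_1^1)$ and $\mu(h^1h_1^1)$. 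Take hubs $A = \{h_1^1, h_2, h_2^2\}$ and $B = \{h^1, h_3^1\} \cup V(\mu(h_1h_3^1)) \cup \{b_1\}$, and spokes $X$; $Y$; $\{h_1^2\}$ together with the interior of $\mu(h_1h_1^2)$; $\{h_2^1\}$ together with the interior of $\mu(h^1h_2^1)$; $\{h^2, h_3^2\}$ together with the interiors of $\mu(h^2h_2^2)$ and $\mu(h_1h_3^2)$; and $\{c_1, d_1\}$ together with the interiors of $\mu(h_3h_2^2)$ and $\mu(h_3h_3^1)$. By Claim~\ref{cla:bss} ($\mu(h_2h_i^j)$, $\mu(h^jh_3^j)$, $S_1$, $S_3$ are single edges) all eight sets are connected and disjoint, and every required hub--spoke adjacency is realised by a frame edge; in particular $X$ and $Y$ are two \emph{distinct} spokes, since one hub contains $h_1^1$ while the other contains both $h^1$ and $b_1$, joined through $h_3^1$, $\mu(h_1h_3^1)$ and $a_1$. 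The other two pairs admit entirely analogous models. So the claim is a direct minor exhibition, and the dichotomy you plan to prove (``each placement of the located neighbours either permits a rerouting or realises a gadget'') is both unnecessary and never actually carried out in your text.

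Beyond the deferral, two parts of your plan would fail if pursued. First, your guiding picture---keep the two ``stars'' as hubs and let the internal vertices supply a sixth middle part---cannot work for any of the three pairs: for instance, $Y$'s only frame-neighbours are $h^1$ and $h_1^1$, which lie in the \emph{same} star, so relative to the star hubs $Y$ can never be a spoke, and one gets only $K_{2,5}$ no matter how the subdivision vertices are distributed; every valid model must break the stars apart (in the model above one hub mixes $h_1^1$ with $h_2^2$), and finding that regrouping is the real content of the claim. Second, the ``favourable branch'' of your dichotomy is empty, at least for the first assertion: in Case~3 every Herschel frame is spanning (the maximum of $|V(G)\setminus V(H)|$ is $0$), two splits is the maximum allowed by $K_{2,6}$-freeness, and after Claim~\ref{cla:bss} the sum $\sum_{i,j}|\mu(h_2h_i^j)|$ already equals $4$, its absolute minimum---so no rerouting can ``strictly improve one of these quantities'', and your appeal to the extremal choice (which was imposed to prove Claim~\ref{cla:bss}, about those four paths, not these six) gives nothing. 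Relatedly, the graphs of Figure~\ref{fig:6} are not ``equivalent'' to those of Figure~\ref{fig:t}: the former encode a subdivision vertex together with an extra edge supplied by $3$-connectivity, whereas the relevant graphs of Figure~\ref{fig:t} are realised by the doubly subdivided two-split frame with no extra edges at all.
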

	\begin{proof}
		These three statements would lead to the second, third, and fourth graphs shown in Figure~\ref{fig:t}, respectively. However, this is not possible, as each of these graphs contains a \( K_{2,6} \) minor.
	\end{proof}

	We claim that, up to symmetry, it causes no loss of generality to assume that each of \( \mu(h_1 h_1^1) \), \( \mu(h^1 h_2^1) \), and \( \mu(h_1 h_3^1) \) has no internal vertices. To establish this, we repeatedly apply Claim~\ref{cla:tw}. 
	
	We first observe that at least one of the paths \( \mu(h_1 h_3^1) \), \( \mu(h_3 h_3^1) \), \( \mu(h_3 h_3^2) \), or \( \mu(h_1 h_3^2) \) must have no internal vertex; assume \( \mu(h_1 h_3^1) \) has no internal vertex. If \( \mu(h^1 h_1^1) \) contains an internal vertex, then we are done because both \( \mu(h_1 h_1^1) \) and \( \mu(h^1 h_2^1) \) would have no internal vertices. Therefore, assume \( \mu(h^1 h_1^1) \) has no internal vertex.
	
	If $\mu(h_1h_1^2)$ also has no internal vertex, then the claim follows (since $\mu(h^1h_1^1)$ and $\mu(h_1h_1^2)$ may play the roles of $\mu(h_1h_1^1)$ and $\mu(h^1h_2^1)$, respectively, while $\mu(h_1h_3^1)$ retains its original role). Otherwise, suppose \( \mu(h_1 h_1^2) \) contains an internal vertex. Then each of $\mu(h_1 h_1^1)$ and $\mu(h^2 h_1^2)$ has no internal vertex. If \( \mu(h_1 h_3^2) \) has no internal vertex, the claim is satisfied (since $\mu(h^2h_1^2)$, $\mu(h_1h_1^1)$, and $\mu(h_1h_3^2)$ may play the roles of $\mu(h_1h_1^1)$, $\mu(h^1h_2^1)$, and $\mu(h_1h_3^1)$, respectively). If \( \mu(h_1 h_3^2) \) does have an internal vertex, then \( \mu(h_3 h_3^2) \) must have no internal vertex. We can reason similarly for \( \mu(h_3 h_3^2) \) as we did for \( \mu(h_1 h_3^1) \). This shows that if the claim does not hold, \( \mu(h^2 h_1^2) \) must contain an internal vertex. However, this would not be possible, as \( \mu(h_1 h_1^2) \) contains some internal vertex.
	
	Thus, we conclude that we can assume each of the paths \( \mu(h_1 h_1^1) \), \( \mu(h^1 h_2^1) \), and \( \mu(h_1 h_3^1) \) has no internal vertices. It then follows from Claim~\ref{cla:bss} that the union of $S_1$ and $\mu(h_1 h_1^2 h^2 h_2^2 h_3 h_2^1 h_2 h_1^1 h^1 h_3^1 h_3 h_3^2 h_1)$ forms a Hamilton cycle in $G$, which is not possible.

	\medskip
	\noindent \textbf{Case 4.} Suppose \( H \) is not a spanning subgraph of \( G \). We will show that \( G \) is hamiltonian, leading to a contradiction.

	Let $A$ be a component of $G-V(H)$. Note that $A$ is contained in a face $F$ of $H$. Since $G$ is $3$-connected, there exist three edges joining $A$ to the boundary of $F$, and these edges have distinct end-vertices $w_1,w_2,w_3$ in $H$. By suitably contracting and removing edges in the union of $A$, $H$, and the joining edges, we may obtain one of the graphs in Figure~\ref{fig:u} as a minor of $G$, as follows.
	
	Without loss of generality, either $F$ is bounded by $\mu(h_1h_1^1h_2h_1^2h_1)$, or $F$ is bounded by the union of $\mu(h_2h_1^1h^1h_2^1h_2)$ and $S_2$. 
	
	We first consider the former case. If each of $\mu(h_1^1h_1h_1^2)$ and $\mu(h_1^1h_2h_1^2)$ contains one of $w_1,w_2,w_3$ as an internal vertex, then $G$ contains the second graph in Figure~\ref{fig:u} as a minor. Hence we may assume that $\mu(h_1^1h_1h_1^2)$ contains none of $w_1,w_2,w_3$ as an internal vertex. If all of $w_1,w_2,w_3$ are contained either in $\mu(h_1^1h_2)$ or in $\mu(h_1^2h_2)$, then $G$ contains the third graph in Figure~\ref{fig:u} as a minor. Otherwise, $G$ contains the first graph in Figure~\ref{fig:u} as a minor.
	
	We now consider the latter case. If each of $\mu(h^1h_1^1h_2)$ and $\mu(h^1h_2^1h_2)$ contains one of $w_1,w_2,w_3$ as an internal vertex, then $G$ contains the fourth graph in Figure~\ref{fig:u} as a minor. Thus we may assume that none of $w_1,w_2,w_3$ is an internal vertex of $\mu(h^1h_2^1h_2)$. Let $u$ denote the common end-vertex of $\mu(h_2h_1^1)$ and $\mu(h_2h_1^2)$. If each of $\mu(h^1h_1^1h_2)-u$ and $S_2-u$ contains one of $w_1,w_2,w_3$, or if $S_2$ contains all of $w_1,w_2,w_3$, then $G$ contains the fifth graph (which represents two graphs) or the sixth graph in Figure~\ref{fig:u} as a minor. Therefore, all of $w_1,w_2,w_3$ are contained in $\mu(h^1h_1^1h_2)$. Depending on whether all of $w_1,w_2,w_3$ are contained in $\mu(h^1h_1^1)$, all are contained in $\mu(h_1^1h_2)$, or neither holds, we conclude that $G$ contains the seventh, third, or eighth graph in Figure~\ref{fig:u}, respectively.
	
	\begin{figure}[!ht]
		\centering{%
			\includegraphics[scale=.75]{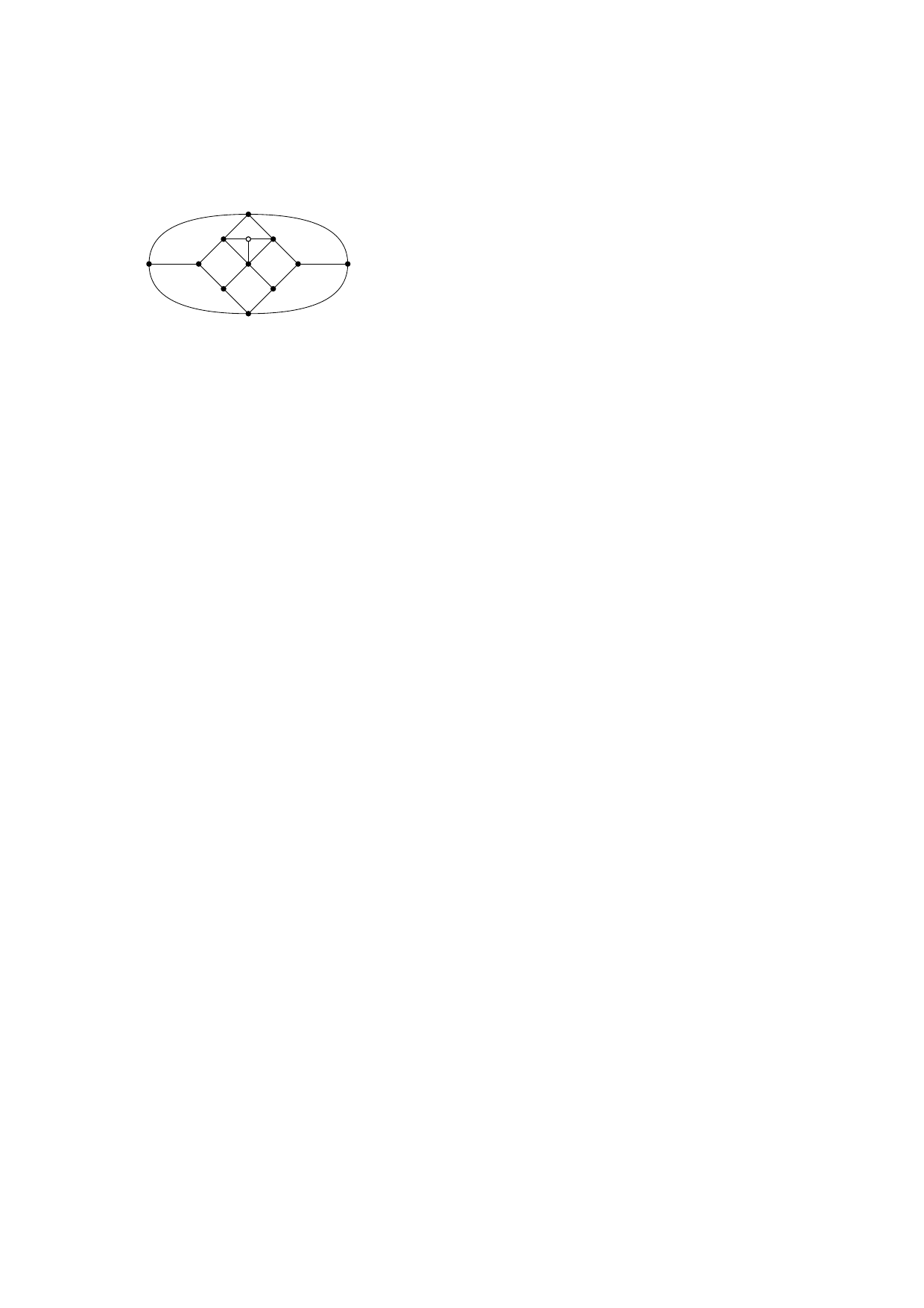}
		}
		\hfill
		{%
			\includegraphics[scale=.75]{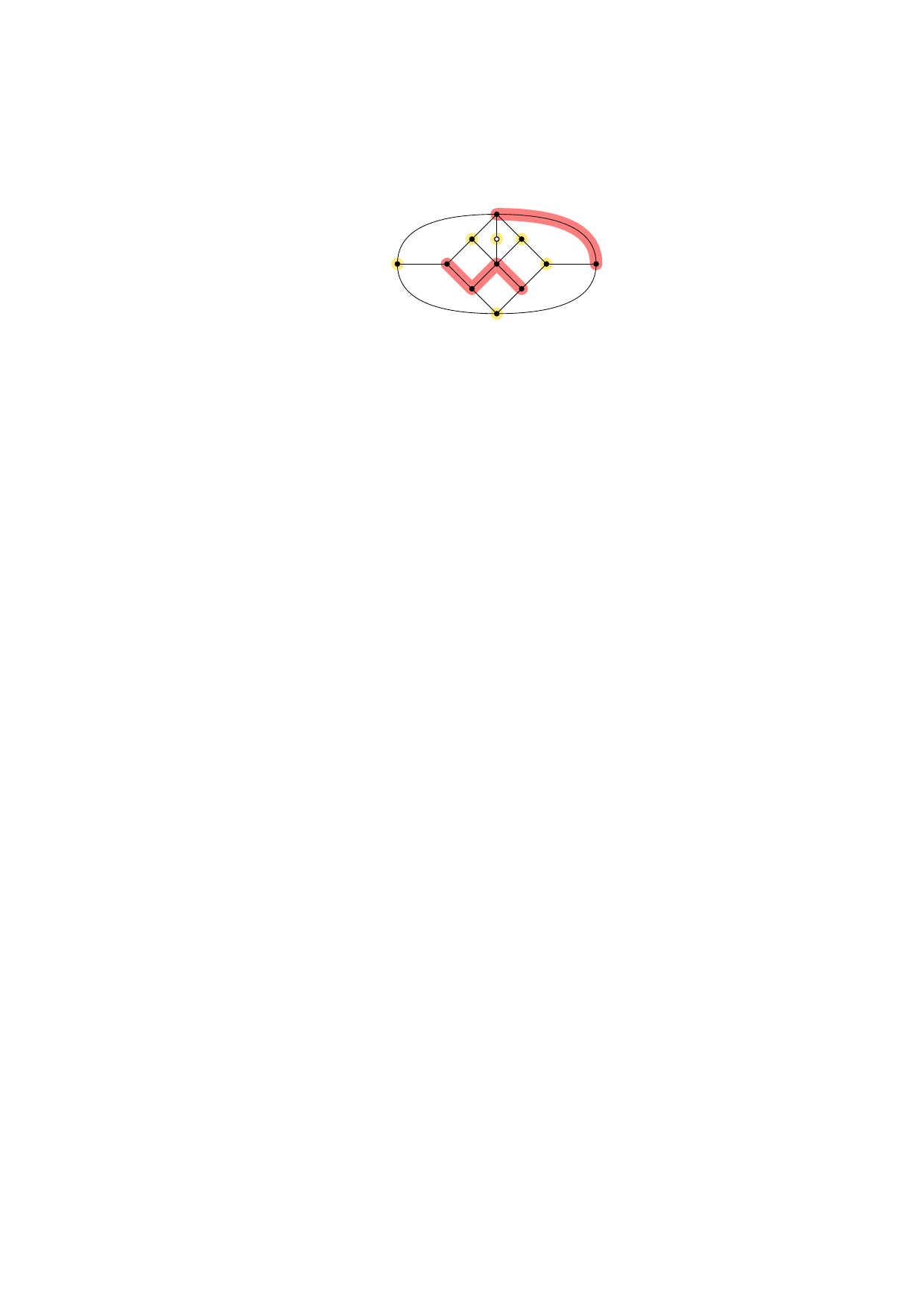}
		}
		\hfill
		{%
			\includegraphics[scale=.75]{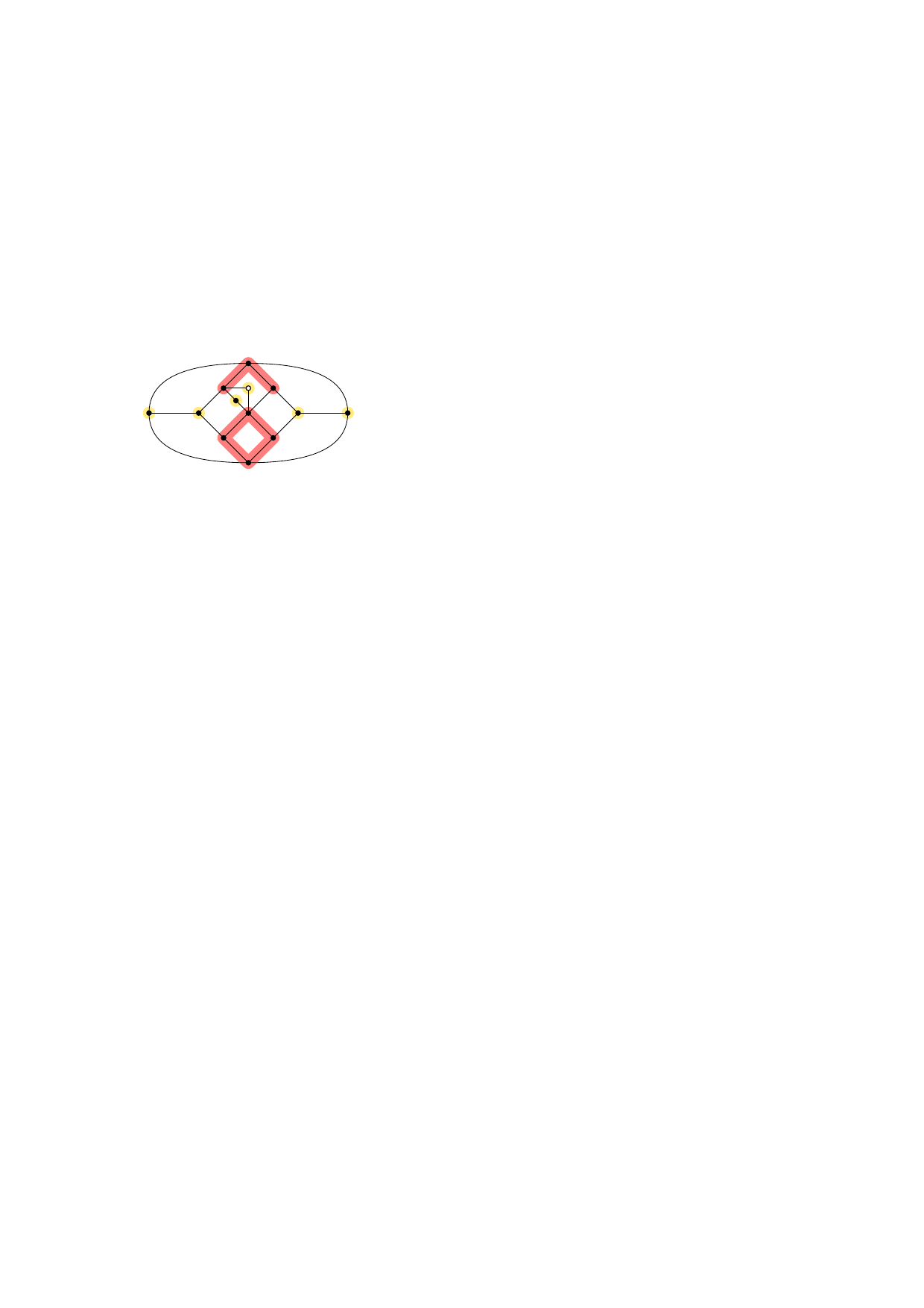}
		}
		\hfill
		{%
			\includegraphics[scale=.75]{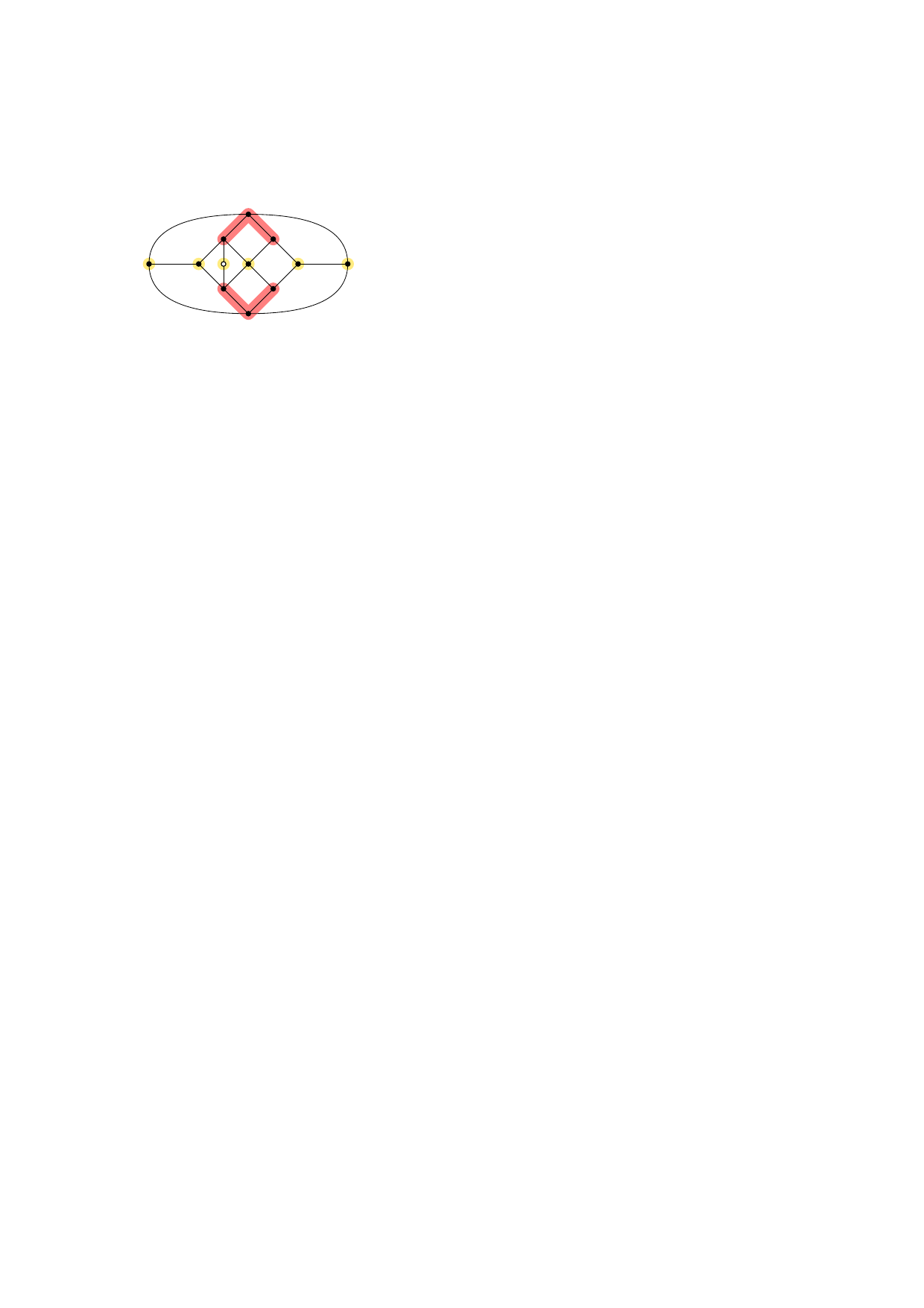}
		}
		
		\vspace{\floatsep}
		
		{%
			\includegraphics[scale=.75]{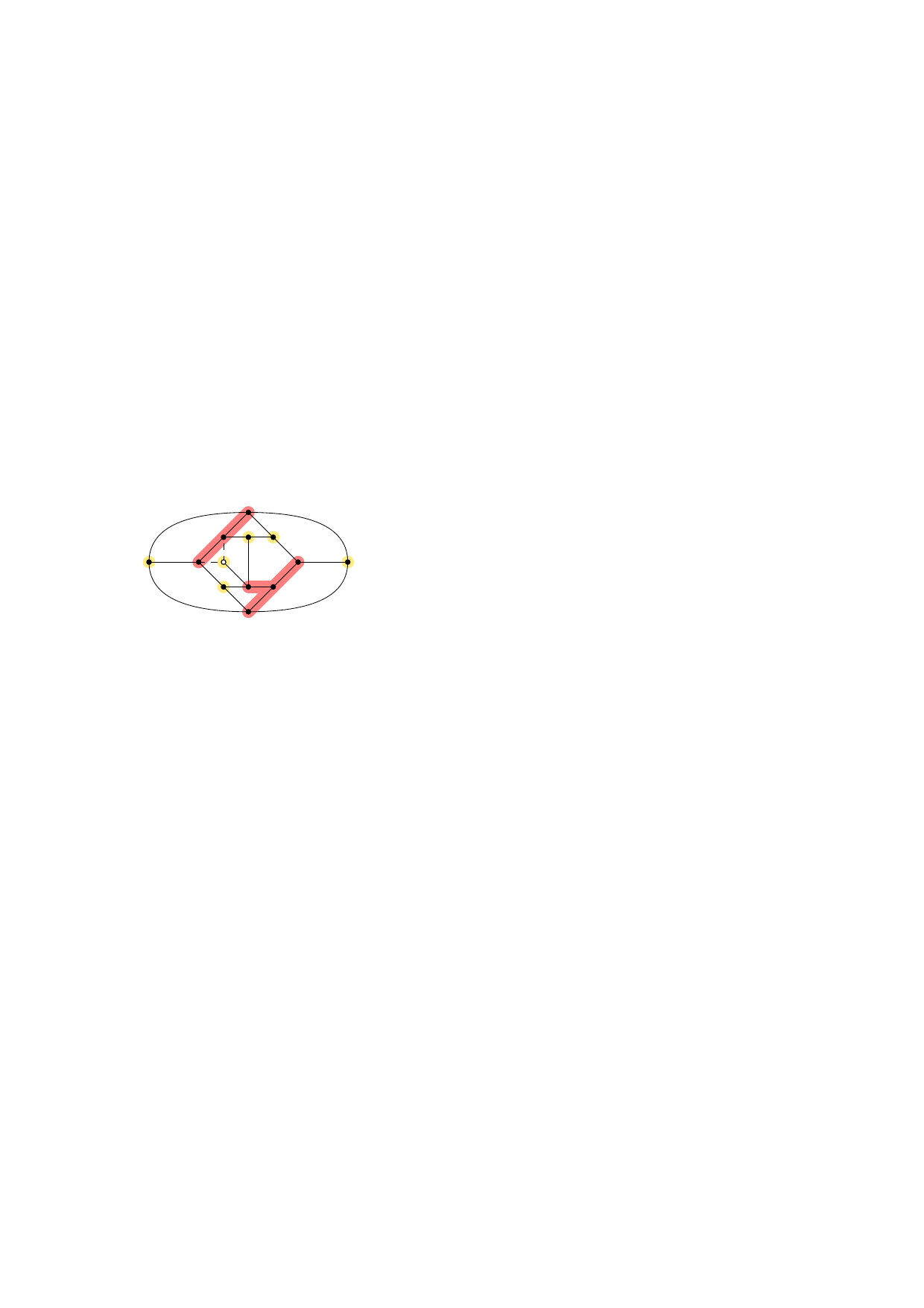}
		}
		\hfill
		{%
			\includegraphics[scale=.75]{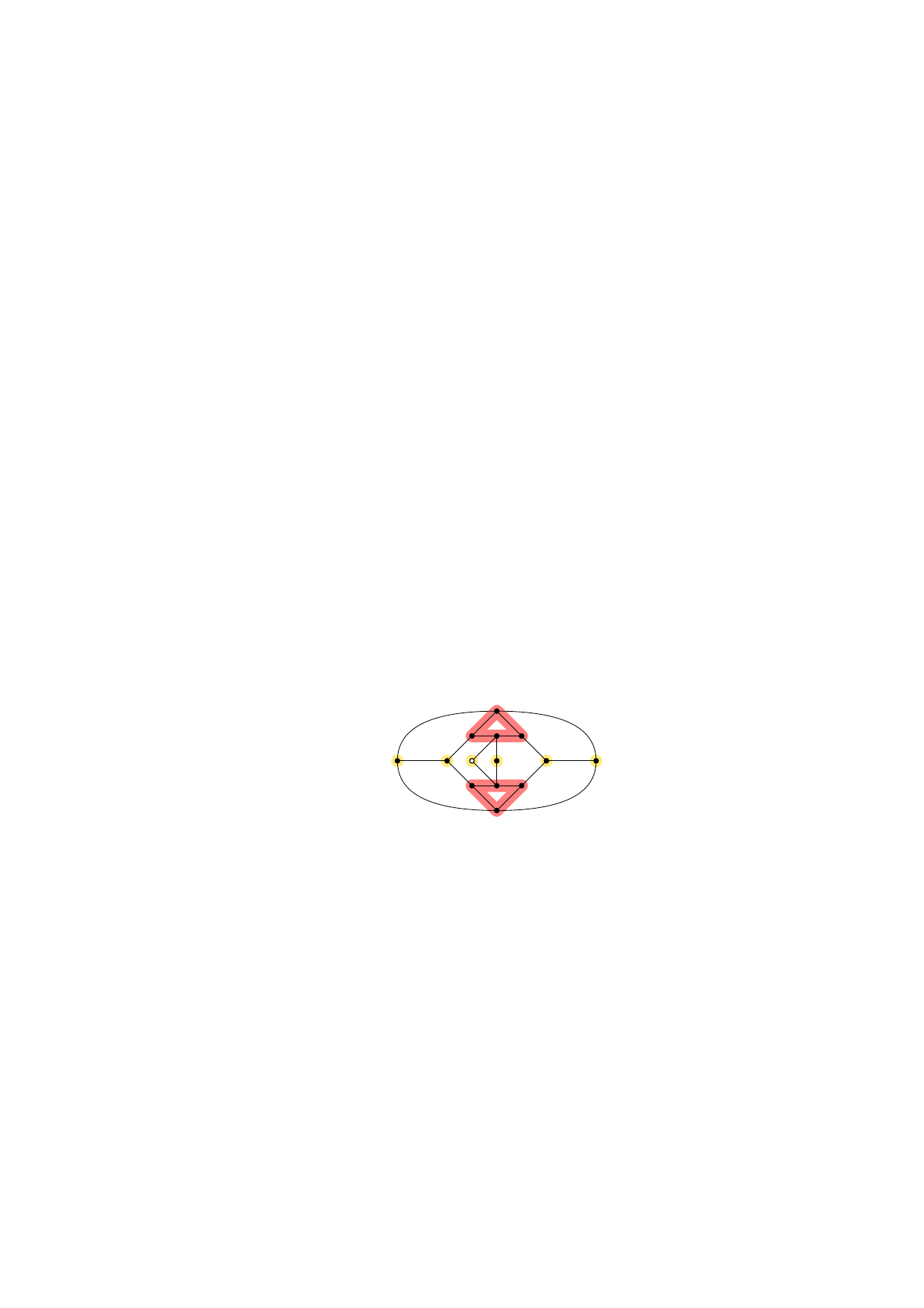}
		}
		\hfill
		{%
			\includegraphics[scale=.75]{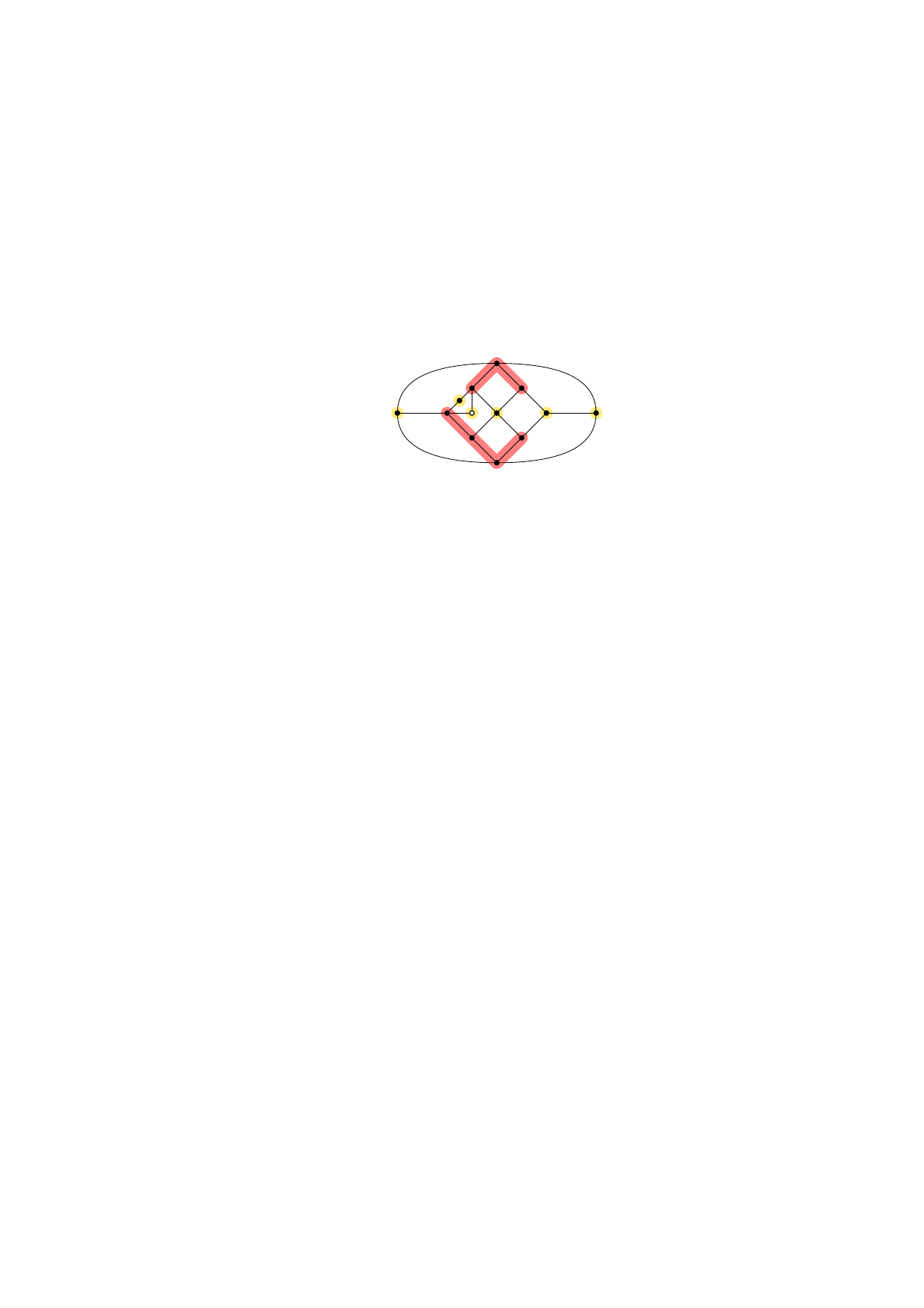}
		}
		\hfill
		{%
			\includegraphics[scale=.75]{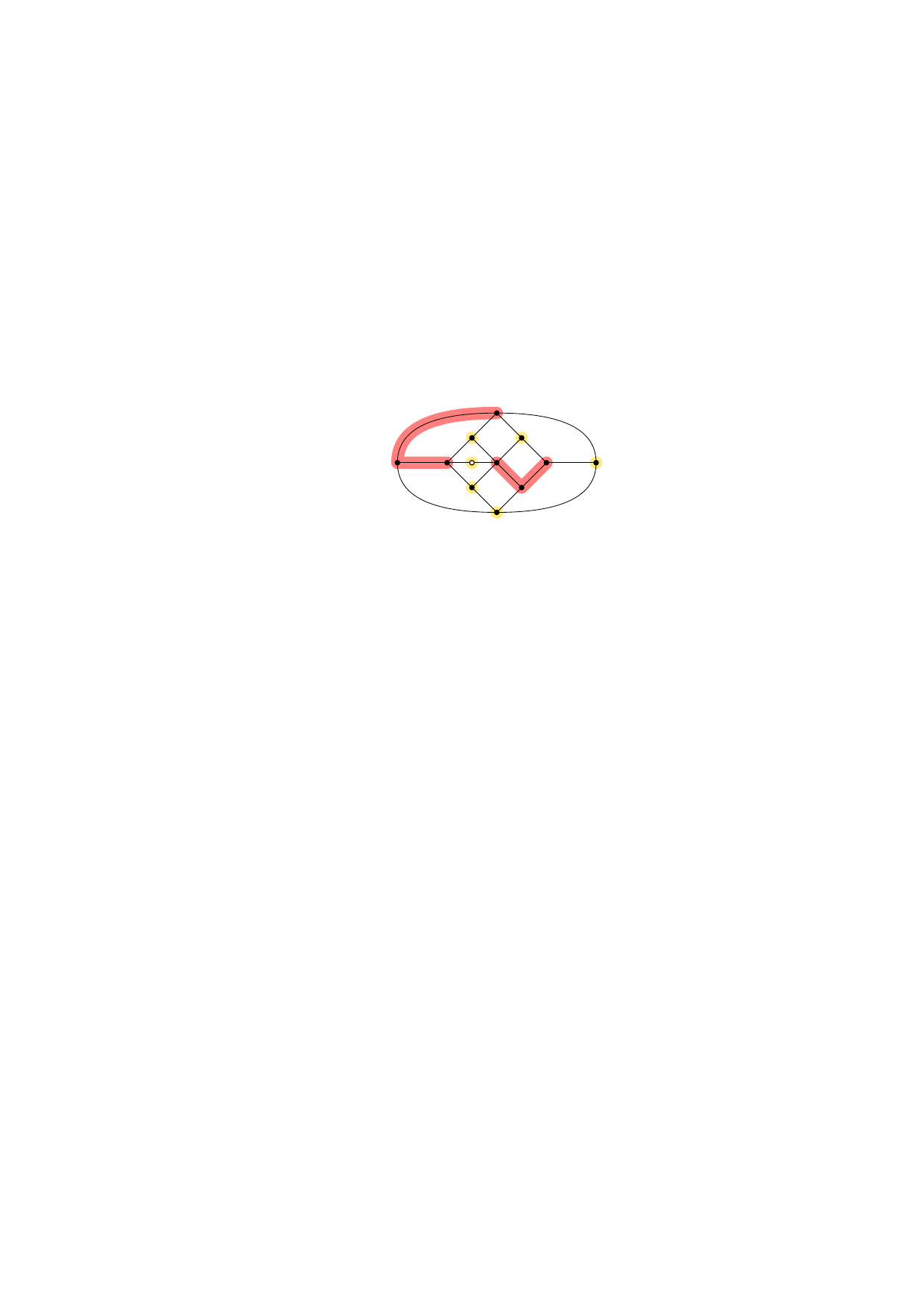}
		}
		\caption{Possible minors of the union of $A$, $H$, and the joining edges. The graph with two dashed edges represents two graphs, each containing exactly one dashed edge.}
		\label{fig:u}
	\end{figure}
	
	Therefore, $G$ must contain the first graph in Figure~\ref{fig:u} as a minor, since every other graph in Figure~\ref{fig:u} contains a $K_{2,6}$ minor. The preceding discussion also shows that $A$ is adjacent to exactly three vertices of $H$.

	We assume that \( A \) lies in the region bounded by \( \mu(h_1 h_1^1 h_2 h_1^2 h_1) \). In fact, there exists an edge connecting \( A \) to \( h_1^1 \) and another edge connecting \( A \) to \( h_1^2 \); otherwise, \( G \) would contain a \( K_{2,6} \) minor, as shown in the first graph of Figure~\ref{fig:v}. Moreover, the second graph in Figure~\ref{fig:v} shows that for any \( i, j \in \{1, 2\} \), the path \( \mu(h_i h_1^j) \) cannot contain any internal vertices. This implies that $A$ is adjacent to $h_1^1$, $h_1^2$, and $u$.

	\begin{figure}[!ht]
		\centering{%
			\includegraphics[scale=.75]{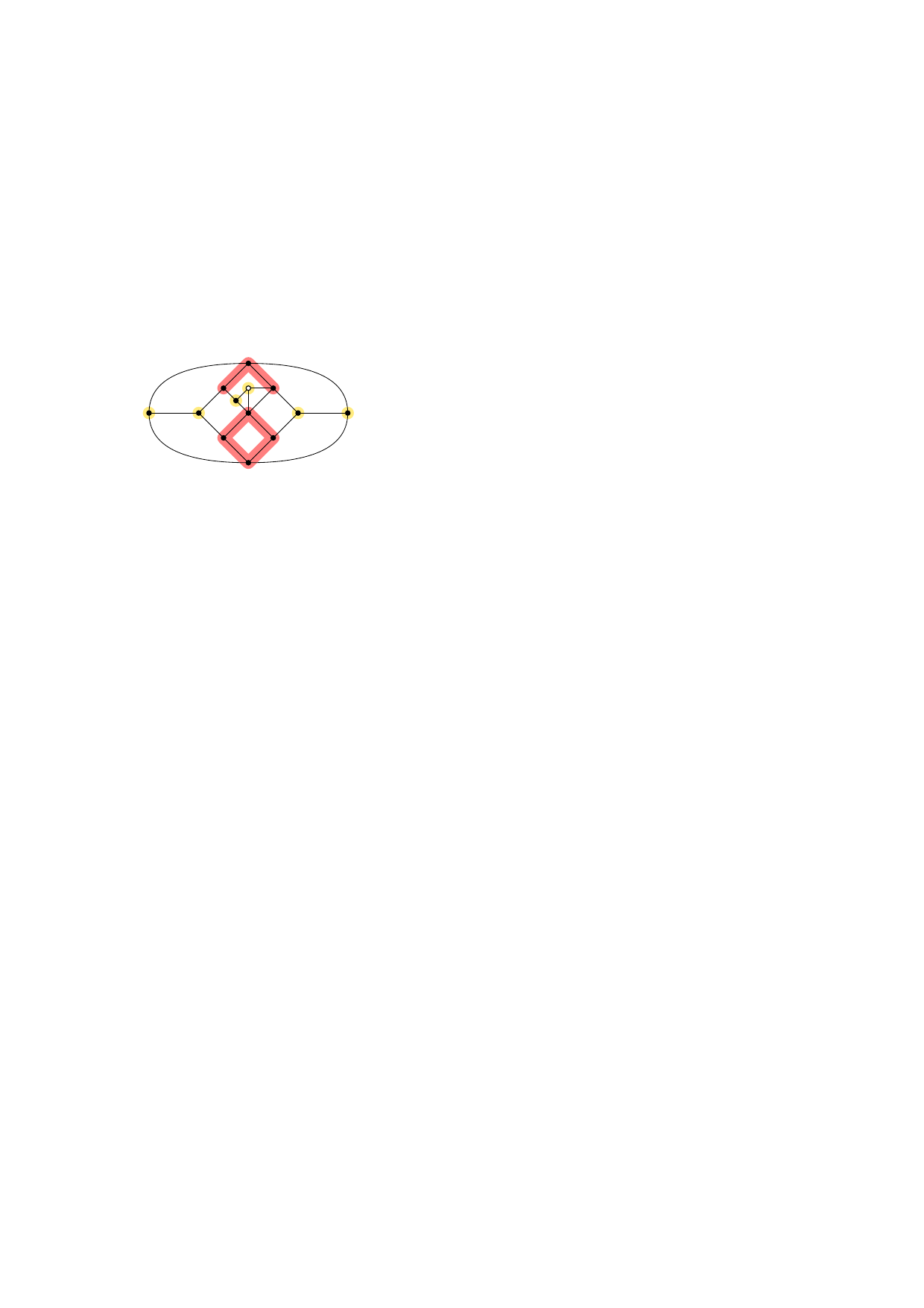}
		}
		\hfill
		{%
			\includegraphics[scale=.75]{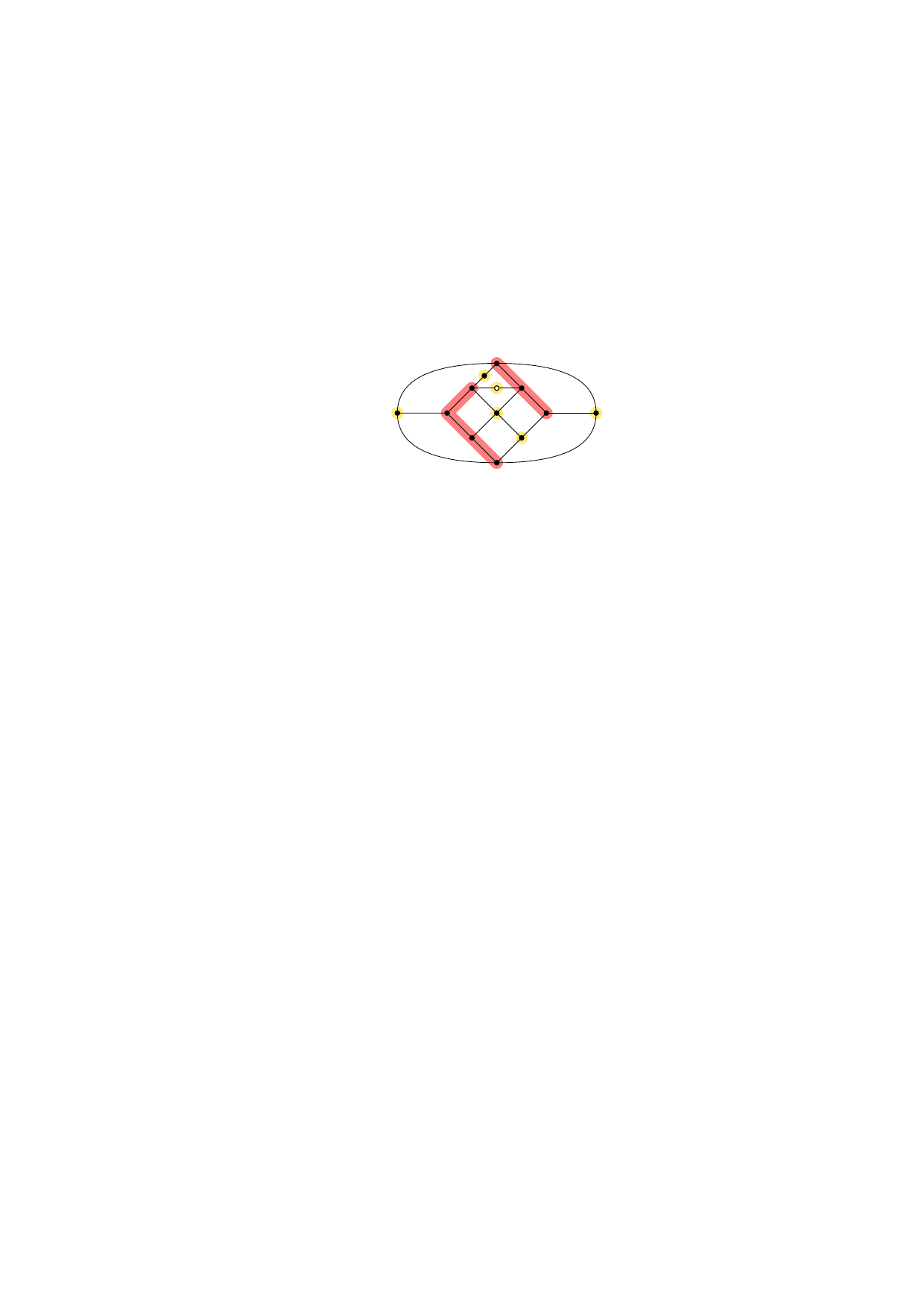}
		}
		\hfill
		{%
			\includegraphics[scale=.75]{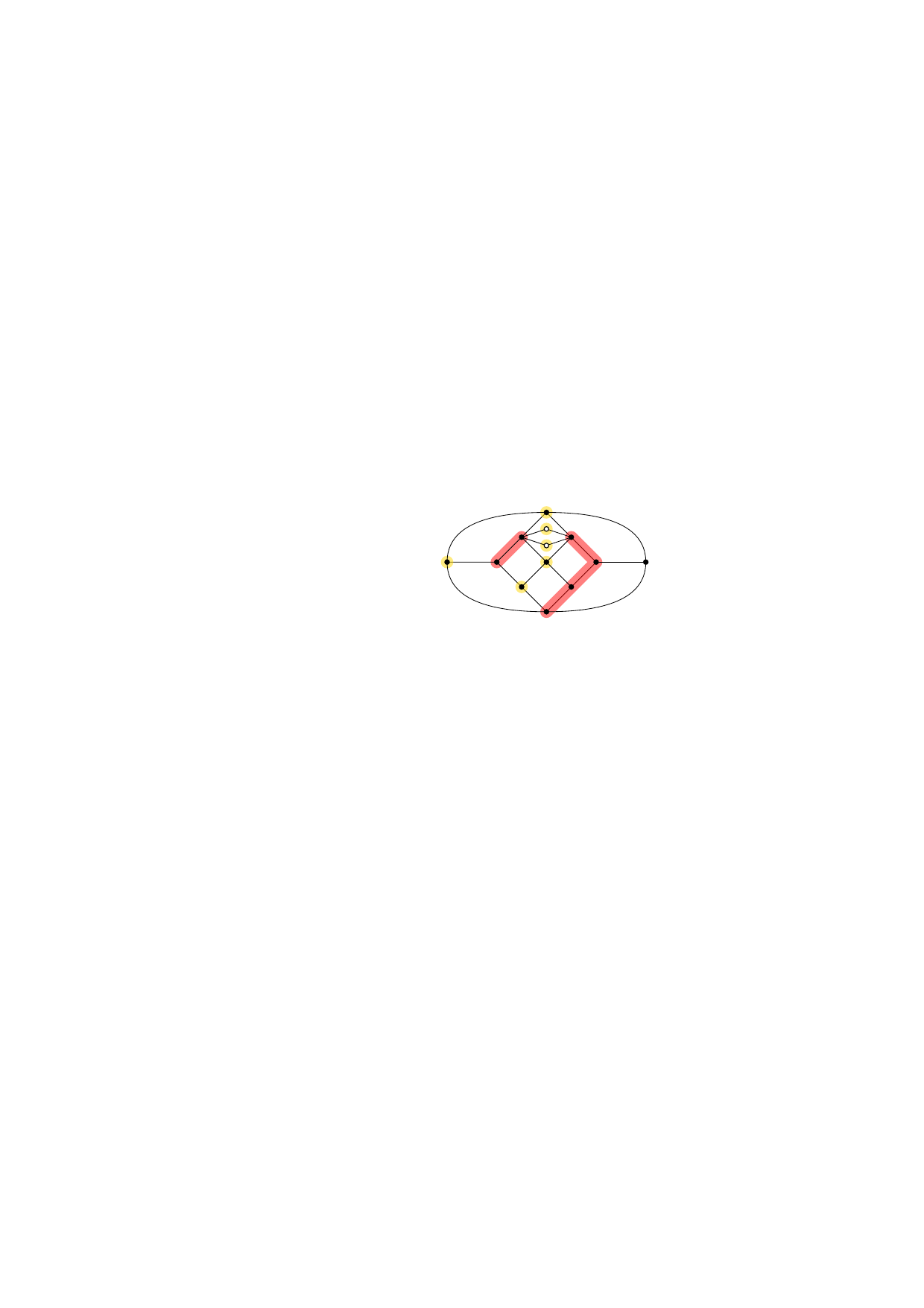}
		}
		\hfill
		{%
			\includegraphics[scale=.75]{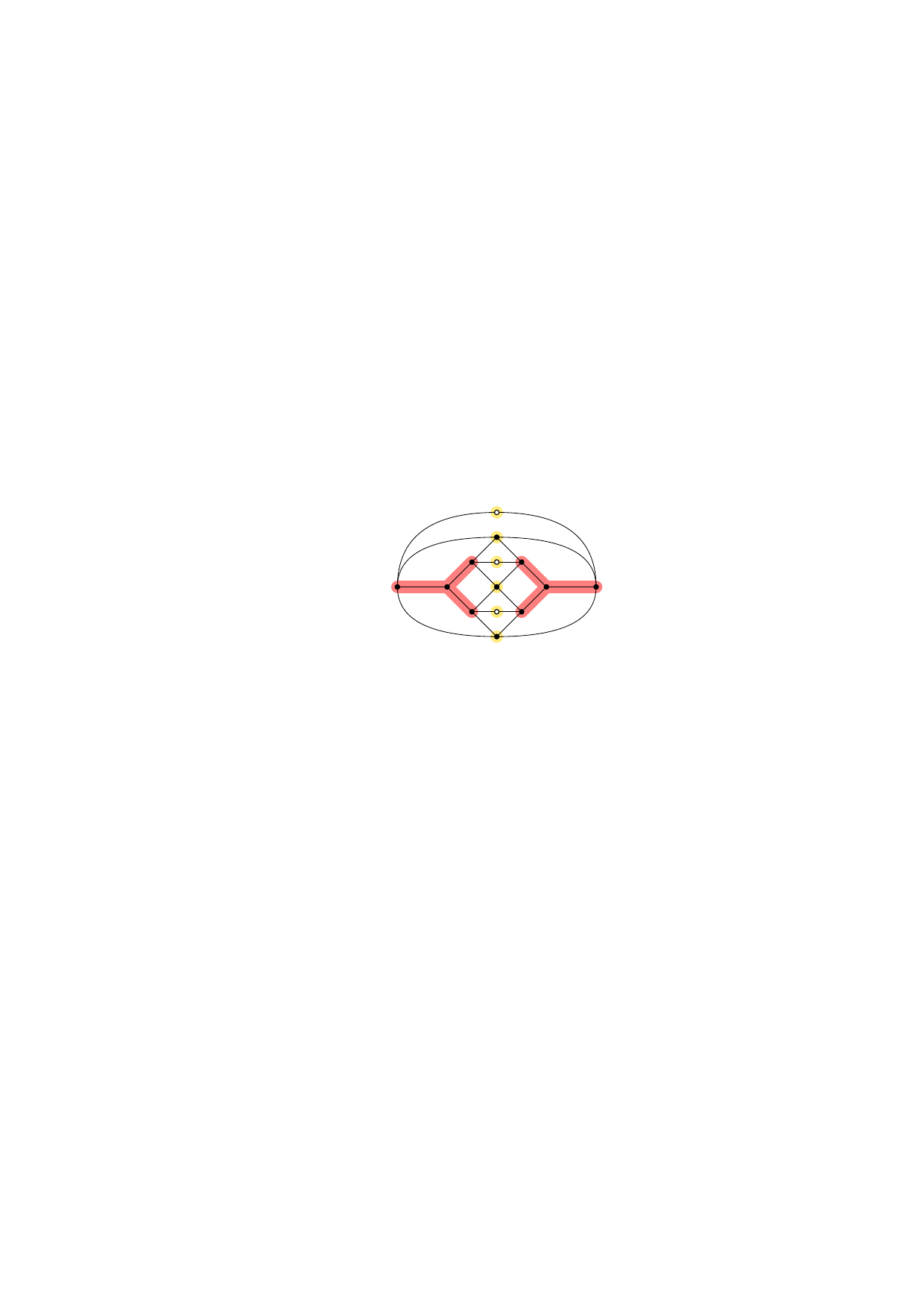}
		}
		\caption{Four graphs, each containing a $K_{2,6}$ minor.}
		\label{fig:v}
	\end{figure}
	
	The following claim aids in finding a path that spans a component of $G-V(H)$, which is instrumental in constructing a Hamilton cycle later.

	\begin{claim} \label{cla:A}
		Let $A$ be a component of $G-V(H)$ contained in the face bounded by $\mu(h_1h_1^1h_2h_1^2h_1)$, and suppose that $A$ is adjacent to $u$, where $u$ is the common end-vertex of $\mu(h_2h_1^1)$ and $\mu(h_2h_1^2)$. Let $A_0$ be the union of $A$, $\{h_1^1,h_1^2\}$, and the edges joining $A$ to $\{h_1^1,h_1^2\}$. Similarly, let $A_1$ be the union of $A$, $\{h_1^1,u\}$, and the edges joining $A$ to $\{h_1^1,u\}$. Then $A_0$ contains a spanning path joining $h_1^1$ and $h_1^2$, and $A_1$ contains a spanning path joining $h_1^1$ and $u$.
	\end{claim}
	
	\begin{proof}
		Observe that $A_0$ (respectively, $A_1$) is connected. Since $G$ is polyhedral and $A$ is adjacent only to $h_1^1,h_1^2,u$ in $H$, any cut-vertex of $A_0$ (respectively, $A_1$) must separate $h_1^1$ and $h_1^2$ (respectively, $h_1^1$ and $u$).
		
		Suppose that $A_0$ does not contain a spanning path with end-vertices $h_1^1$ and $h_1^2$. By \cite[Lemma~17]{Ellingham2016}, $A_0$ contains a $K_{2,2}$ minor rooted at $h_1^1$ and $h_1^2$. That is, there exist disjoint sets $U_1,U_2 \subset V(A_0)$ and distinct vertices $u_1,u_2 \in V(A_0)\setminus (U_1\cup U_2)$ such that, for each $j \in \{1,2\}$, the set $U_j$ contains $h_1^j$, induces a connected subgraph, and is adjacent to both $u_1$ and $u_2$. Consequently, $G$ contains the third graph in Figure~\ref{fig:v} as a minor, a contradiction.
		
		Similarly, if $A_1$ does not contain a spanning path with end-vertices $h_1^1$ and $u$, then \cite[Lemma~17]{Ellingham2016} implies that $A_1$ contains a $K_{2,2}$ minor rooted at $h_1^1$ and $u$. It follows that $G$ contains the third graph in Figure~\ref{fig:u} as a minor, again a contradiction.
	\end{proof}

	The third and fourth graphs in Figure~\ref{fig:v} illustrate that no two components of \( G - V(H) \) can reside in the same face of $H$, and \( G - V(H) \) has at most two components.

	Assume that \( G - V(H) \) consists of precisely one component \( A \), which lies within the face bounded by $\mu(h_1h_1^1h_2h_1^2h_1)$. Additionally, assume that the total length of the paths \( \mu(h^1 h_1^1) \), \( \mu(h_2 h_2^1) \), \( \mu(h_3 h_3^1) \), \( \mu(h_1 h_3^2) \), \( \mu(h^2 h_1^2) \), and \( \mu(h_3 h_2^2) \) is minimized. We claim that none of these six paths has internal vertices. 
	
	To establish this, we prove the claim for \( \mu(h^1 h_1^1) \) and omit the proofs for the remaining paths, as they follow by analogous arguments.

Suppose that $\mu(h^1h_1^1)$ contains an internal vertex $v$. Since $G$ does not contain the graph in Figure~\ref{fig:b1} as a minor, the vertex $v$ must have a neighbor in $V(\mu(h_2^1h^1h_3^1)) \setminus \{h^1\}$. By suitably modifying the Herschel frame, one can reduce the total length of the six paths without increasing $|V(H)|$ or decreasing the number of splits, contradicting our new assumption.

	Let \( P \) be the path with end-vertices \( h_1^1 \) and \( h_1^2 \) such that \( V(P) = \{h_1^1, h_1^2\} \cup V(A) \), which is assured by Claim~\ref{cla:A}. The union of \( P \), \( \mu(h_1 h_1^2) \), \( S_1 \), \( \mu(h_1 h_3^1 h^1 h_2^1 h_3) \), \( S_3 \), \( \mu(h_3 h_3^2 h^2 h_2^2 h_2) \), \( S_2 \), and \( \mu(h_2 h_1^1) \) forms a Hamilton cycle in \( G \), a contradiction.
	
	Finally, we assume $G- V(H)$ has precisely two components $A$ and $B$ and, without loss of generality, $A$ and $B$ lie within the faces bounded by $\mu(h_1h_1^1h_2h_1^2h_1)$ and by $\mu(h_2h_2^1h_3h_2^2h_2)$, respectively. It is easy to see that $H$ has no split.

	We further assume that the total length of the paths \( \mu(h^1 h_2^1) \), \( \mu(h_3 h_3^1) \), and \( \mu(h^2 h_3^2) \) is minimized. Similarly as before, one can show that these three paths do not have any internal vertices.
	
	Let \( P_A \) be the path with end-vertices \( h_1^1 \) and \( h_1^2 \) such that \( V(P_A) = \{h_1^1, h_1^2\} \cup V(A) \), as assured by Claim~\ref{cla:A}. Let $h_i$ with \( i \in \{2, 3\} \) be the vertex adjacent to \( B \). By Claim~\ref{cla:A}, there is a path \( P_B \) with end-vertices \( h_i \) and \( h_2^1 \) spanning \( \{h_i, h_2^1\} \cup V(B) \). Let \( P_B' \) be the union of \( P_B \) and \( \mu(h_{5-i} h_2^1) \).
	
	The union of \( P_A \), \( \mu(h_1^1 h^1 h_3^1 h_1 h_3^2 h_3) \), \( P_B' \), and \( \mu(h_2 h_2^2 h^2 h_1^2) \) forms a Hamilton cycle in \( G \), a contradiction.
	
	\medskip
	
	This completes the proof of Theorem~\ref{thm:Hf}.

	The final case of the above proof leads to the following consequence.
	
	\begin{corollary} \label{cor:Hf}
		Any non-hamiltonian polyhedral graph without $K_{2,6}$ minors does not contain a non-spanning Herschel frame.
	\end{corollary}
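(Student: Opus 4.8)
The plan is to read the Corollary as a direct distillation of Case~4 in the proof of Theorem~\ref{thm:Hf}, requiring essentially no new argument. Let $G$ be a non-hamiltonian polyhedral graph without a $K_{2,6}$ minor. By Theorem~\ref{thm:H}, $G$ possesses at least one Herschel frame, so the quantity
\[
m := \max_{H}\, \bigl| V(G) \setminus V(H) \bigr|,
\]
where the maximum ranges over all Herschel frames $H$ of $G$, is well defined and satisfies $m \ge 0$. I would establish the Corollary by proving $m = 0$, which is exactly equivalent to the assertion that every Herschel frame of $G$ is spanning, i.e.\ that $G$ contains no non-spanning Herschel frame.

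I would argue by contradiction. Assume $m \ge 1$ and fix a Herschel frame $H$ attaining $|V(G) \setminus V(H)| = m$ and, subject to this, having the largest possible number of splits---precisely the frame singled out at the start of the proof of Theorem~\ref{thm:Hf}. Since $m \ge 1$, this $H$ is non-spanning, so we are placed exactly in the situation treated in Case~4. The entire analysis of that case then applies verbatim: each component $A$ of $G - V(H)$ attaches to exactly three vertices of $H$ and lies, up to symmetry, in the face bounded by $\mu(h_1 h_1^1)$, $\mu(h_2 h_1^1)$, $\mu(h_2 h_1^2)$, and $\mu(h_1 h_1^2)$; there are at most two such components, residing in distinct faces; Claim~\ref{cla:A} furnishes the spanning paths through each component; and these paths assemble into one of the explicit Hamilton cycles exhibited at the end of Case~4. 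This contradicts the assumed non-hamiltonicity of $G$, forcing $m = 0$ and hence the nonexistence of any non-spanning Herschel frame.

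The only point deserving care---and the sole obstacle---is to confirm that the reasoning of Case~4 never invoked any property of the chosen frame beyond its being non-spanning together with the two maximality conditions ($|V(G)\setminus V(H)|$ maximal, then the number of splits maximal) that are already built into the setup of Theorem~\ref{thm:Hf}. Once this is verified, no fresh casework is needed: the Hamilton cycle produced in Case~4 contradicts the hypothesis immediately, and the Corollary follows. In effect, the statement merely repackages the contradiction reached in the final case as a standalone structural fact about non-hamiltonian, $K_{2,6}$-minor-free polyhedra.
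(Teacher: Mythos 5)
Your proposal is correct and takes essentially the same route as the paper, which justifies the corollary with the single remark that ``the final case of the above proof leads to the following consequence'': Case~4 shows that a frame maximizing $|V(G)\setminus V(H)|$ (then the number of splits) cannot be non-spanning, and your observation that any non-spanning frame forces the maximizing frame to be non-spanning is exactly the small bridging step that makes this remark rigorous.
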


	\subsection{Proof of Conjecture~\ref{con:K26}} \label{sec:K26c}
	%\subsection{Characterizing non-hamiltonian polyhedra without $K_{2,6}$ minors} \label{sec:K26c}
	
	In this section, we present a proof of the conjecture by Ellingham et al., which asserts that every non-hamiltonian polyhedral graph with at least 16 vertices and no \( K_{2,6} \) minors is isomorphic to one of the graphs depicted in Figure~\ref{subfig:K26f1} or Figure~\ref{subfig:K26f2}, where each dashed edge may or may not be present.

	Let \( G_n^\bullet \) and \( G_n^\circ \) denote the graphs constructed from \( \mathfrak{H}_n^\bullet \) and \( \mathfrak{H}_n^\circ \), respectively, by adding all five dashed edges as depicted in Figure~\ref{fig:Herschelfamily}. The vertex labeling provided in Figure~\ref{fig:Herschelfamily} is naturally inherited.
	
	It is easy to see that for any $n \ge 11$, \( G_n^\bullet \) is hamiltonian if and only if \( G_{n+2}^\circ \) is hamiltonian. Note that \( G_n^\bullet \) does not have a Hamilton cycle because removing the vertices \( h_1 \), \( h_2 \), \( h_3 \), \( h^1 \), and \( h^2 \) disconnects the graph into six components.	Therefore, both \( G_n^\bullet \) and \( G_{n+2}^\circ \) are non-hamiltonian for all $n \ge 11$.
	
	Next, we show that neither \( G_n^\bullet \) nor \( G_n^\circ \) contains a \( K_{2,6} \) minor. For any \( n \geq 11 \), it is straightforward to see that \( G_n^\bullet \) is a minor of \( G_{n+2}^\circ \). Therefore, it suffices to prove the claim for \( G_n^\circ \) with \( n \geq 13 \). 
	
	Assume for contradiction that \( G_n^\circ \) contains a \( K_{2,6} \) minor for some \( n \). Take such an integer \( n \) to be as small as possible. Then, there exist disjoint vertex subsets \( A_1, A_2 \subseteq V(G_n^\circ) \) and distinct vertices \( b_1, \dots, b_6 \in V(G_n^\circ) \setminus (A_1 \cup A_2) \) such that \( A_i \) induces a connected subgraph, and  \( A_i \) is adjacent to \( b_j \) for all \( i \in \{1, 2\} \) and \( j \in \{1, \dots, 6\} \). Denote $B := \{b_1, \dots, b_6\}$. 
	
	Now consider the case where \( n = 13 \). To simplify the argument, we take advantage of the enhanced symmetry of the graph \( H \) obtained by adding the edge \( h_2h^2 \) to \( G_{13}^\circ \), and prove a stronger claim: \( H \) does not contain \( K_{2,6} \) as a minor.

	Without loss of generality, we assume \( |A_1| \leq |A_2| \), and thus \( |A_1| \leq 3 \). We then exhaust, up to the symmetry of \( H \), all configurations in which \( A_1 \) is adjacent to at least six vertices not contained in \( A_1 \). If \( |A_1| = 1 \), \( A_1 \) must be \( \{h_1\} \) or \( \{h^2\} \). If \( |A_1| = 2 \), \( A_1 \) must be \( \{h_1^1, h_1\} \), \( \{h_1, h_2\} \), \( \{h_1, h_1^2\} \), or \( \{h_1, h^2\} \). If \( |A_1| = 3 \), \( A_1 \) must be $\{t_1, h_1^1, h_1\}$, $\{t_1, h_1^1, h_2\}$, $\{h_1^1, h_1, h_2\}$, $\{h_1^1, h_1, h_3\}$, $\{h_1^1, h_1, h_1^2\}$, $\{h_1^1, h_1, h_3^2\}$, $\{h_1^1, h_1, h^2\}$, $\{h_1, h_2, h_3\}$, $\{h_1, h_2, h_1^2\}$, $\{h_1, h_2, h_2^2\}$, $\{h_1, h_2, h^2\}$, $\{h_1, h_1^1, h^2\}$, or $\{h_1, h_2^1, h^2\}$. A straightforward verification confirms that none of these configurations meet the conditions for a \( K_{2,6} \) minor. This establishes that this case cannot occur.
	
	Suppose \( n > 13 \). Let \( F := \{h_3^2, \dots, h_{n - 10}^2\} \). Clearly, \( A_i \not\subseteq F \), since \( A_i \) would then be adjacent to at most three vertices outside \( A_i \). Moreover, \( A_i \) cannot contain two consecutive vertices from \( F \), as this would lead to a \( K_{2,6} \) minor in \( G_{n-1}^\circ \). For the same reason, we conclude that \( F \subset A_1 \cup A_2 \cup B \).
	
	If \( h^2 \notin A_1 \cup A_2 \), then \( |B \cap F| \leq 1 \), and contracting \( F \) into a single vertex preserves the \( K_{2,6} \) minor, leading to a contradiction since \( G_{13}^\circ \) would then contain a \( K_{2,6} \) minor.
	
	Without loss of generality, assume \( h^2 \in A_1 \). Clearly, \( |B \cap F| \leq 2 \). 
	
	Suppose \( |B \cap F| = 2 \). Then \( A_2 \) must contain \( \{h_1, h_3\} \). If \( t_3 \in A_1 \), then \( h_2 \) must also be in \( A_1 \). However, in this case, \( A_2 \) can connect to at most two vertices from \( F \), at most one vertex from \( \{h_1^2, h_2^2\} \), at most one vertex from \( \{h_1^1, h_2^1, t_1, t_2\} \), and at most one vertex from \( \{h_3^1\} \). This contradicts that \( A_2 \) is adjacent to six vertices in \( V(G_{13}^\circ) \setminus (A_1 \cup A_2) \). If $A_1$ does not contain $t_3$, but contains $t_k$ for some \( k \in \{1, 2\} \), then \( \{h_{3-k}^1, t_{3-k}, t_3\} \subseteq B \). However, this is impossible because \( t_{3-k} \) cannot connect to \( A_2 \). If \( \{t_1, t_2, t_3\} \cap A_1 = \emptyset \), then \( A_1 \) can connect to at most three vertices from \( V(G_{13}^\circ) \setminus (A_1 \cup A_2 \cup F) \), again leading to a contradiction.
	
	Suppose \( |B \cap F| \le 1 \). If \( A_2 \cap F \neq \emptyset \), then, since \( A_2 \not\subseteq F \) and \( h^2 \in A_1 \), one can readily show that \( G_{n-1}^\circ \) also contains a \( K_{2,6} \) minor (by contracting an edge with one end-vertex in \( A_2 \cap F \) and the other end-vertex in \( A_2 \setminus F \)), which leads to a contradiction. Otherwise, we must have \( A_1 \cap F \neq \emptyset \). It is clear that if \( F \subset A_1 \), then \( G_{n-1}^\circ \) contains a \( K_{2,6} \) minor. Thus, we conclude that \( |B \cap F| = 1 \). By symmetry, we can assume \( h_1 \in A_2 \), \( h_{n-10}^2 \in B \), and \( F \setminus \{h_{n-10}^2\} \subseteq A_1 \) (since \( F \subseteq A_1 \cup A_2 \cup B \)). If \( h_3 \in A_2 \), we can apply the arguments from the previous paragraph to deduce a contradiction. Otherwise, we have three possibilities: \( h_3 \in A_1 \), \( h_3 \in B \), or \( h_3 \notin A_1 \cup A_2 \cup B \). In each case, one can again show that \( G_{n-1}^\circ \) contains a \( K_{2,6} \) minor.
	
	We have shown that both \( G_n^\bullet \) and \( G_n^\circ \) are non-hamiltonian and contain no \( K_{2,6} \) minors. 
	
	%Note that the vertices \( h^2, h^2_3, \dots, h^2_{n-8} \) in \( G_n^\bullet \), and \( h^2, h^2_3, \dots, h^2_{n-10} \) in \( G_n^\circ \), induce a \emph{fan}. By \cite[Lemma~2.11]{O'Connell2018} (or the proof of \cite[Proposition~4.2]{Ellingham2019}), if \( G_n^\bullet \) (respectively \( G_n^\circ \)) had a \( K_{2,6} \) minor, then \( G_{11}^\bullet \) (respectively \( G_{13}^\circ \)) would also have one. Thus, it suffices to prove the claim for \( G_{13}^\circ \), as \( G_{13}^\circ \) contains \( G_{11}^\bullet \) as a minor.

	Now, let \( G \) be a non-hamiltonian polyhedral graph with \( n \geq 16 \) vertices and no \( K_{2,6} \) minors. By Theorem~\ref{thm:Hf}, \( G \) must contain \( \mathfrak{H}_n^\bullet \) or \( \mathfrak{H}_n^\circ \) as a spanning subgraph. To prove the conjecture, it remains to show that it is impossible for \( G \) to contain \( \mathfrak{H}_n^\bullet \) or \( \mathfrak{H}_n^\circ \) as a spanning subgraph while also having some additional edge other than the dashed edges \( h_1 h_2 \), \( h_2 h_3 \), \( h_3 h_1 \), \( h_1 h^2 \), and \( h_3 h^2 \).

	Suppose \( G \) contains \( \mathfrak{H}_n^\bullet \) as a spanning subgraph and has an additional edge \( e \) that is not one of the dashed edges. It is a straightforward fact that \( e \) lies within some face of \( \mathfrak{H}_n^\bullet \). 
	
If both end-vertices of $e$ belong to the set $\{h_1^1,h_2^1,h_3^1\} \cup \{h_1^2,h_2^2,h_3^2,h_{n-8}^2\}$ and $e \neq h_3^2h_{n-8}^2$, then $G$ contains a Hamilton cycle. To see this, observe that the graph obtained from $\mathfrak{H}_n^\bullet$ by adding $e$ is hamiltonian whenever the graph obtained by adding $e$ and contracting the path $h_3^2h_4^2\cdots h_{n-9}^2h_{n-8}^2$ is hamiltonian. Hence, by symmetry of the Herschel graph, it suffices to note that $h_1^1h^1h_3^1h_3h_2^2h^2h_3^2h_1h_1^2h_2h_2^1$ is a Hamilton path of $\mathfrak{H}$ joining $h_1^1$ and $h_2^1$, and that $h_1^1h^1h_3^1h_3h_2^1h_2h_2^2h^2h_3^2h_1h_1^2$ is a Hamilton path of $\mathfrak{H}$ joining $h_1^1$ and $h_1^2$.
	
If both end-vertices of \( e \) are from \( \{h_3^2, \dots, h_{n-8}^2\} \), then \( G \) contains a non-spanning Herschel frame, which, by Corollary~\ref{cor:Hf}, is not possible.
	
It remains to show that if $e=h^1h_i$ for some $i \in \{1,2,3\}$, or $e=h^2h_2$, or $e=h_3^1h_i^2$ for some $i \in \{4,\dots,n-9\}$, or $e=h_ih_k^2$ for some $i \in \{1,3\}$ and $k \in \{4,\dots,n-9\}$, then $G$ contains a $K_{2,6}$ minor. Indeed, in each case, $G$ contains one of the graphs depicted in Figure~\ref{fig:f} as a minor.

\begin{figure}[!ht]
	\centering{%
		\includegraphics[scale=1]{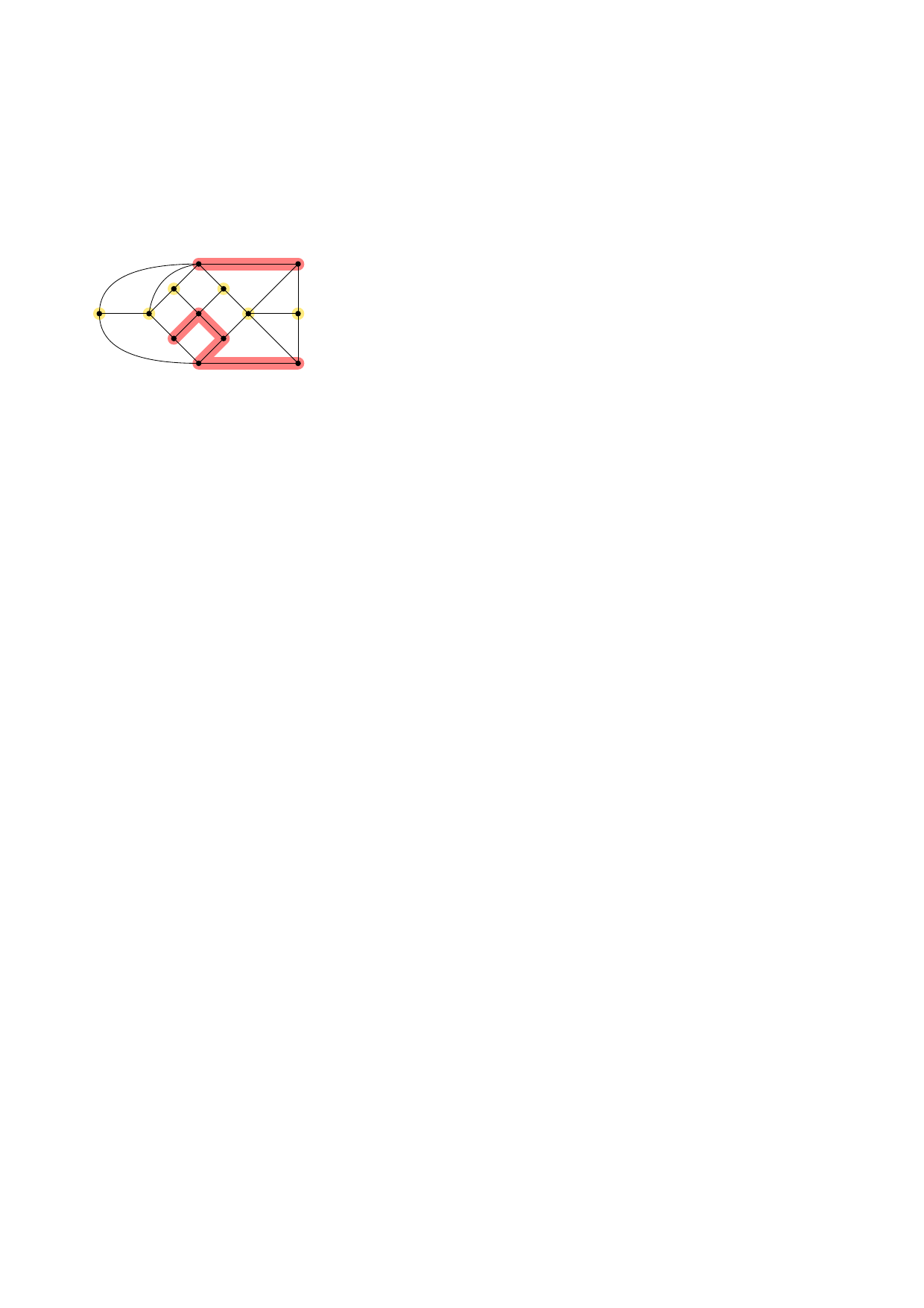}
	}
		\hfill
	{%
		\includegraphics[scale=1]{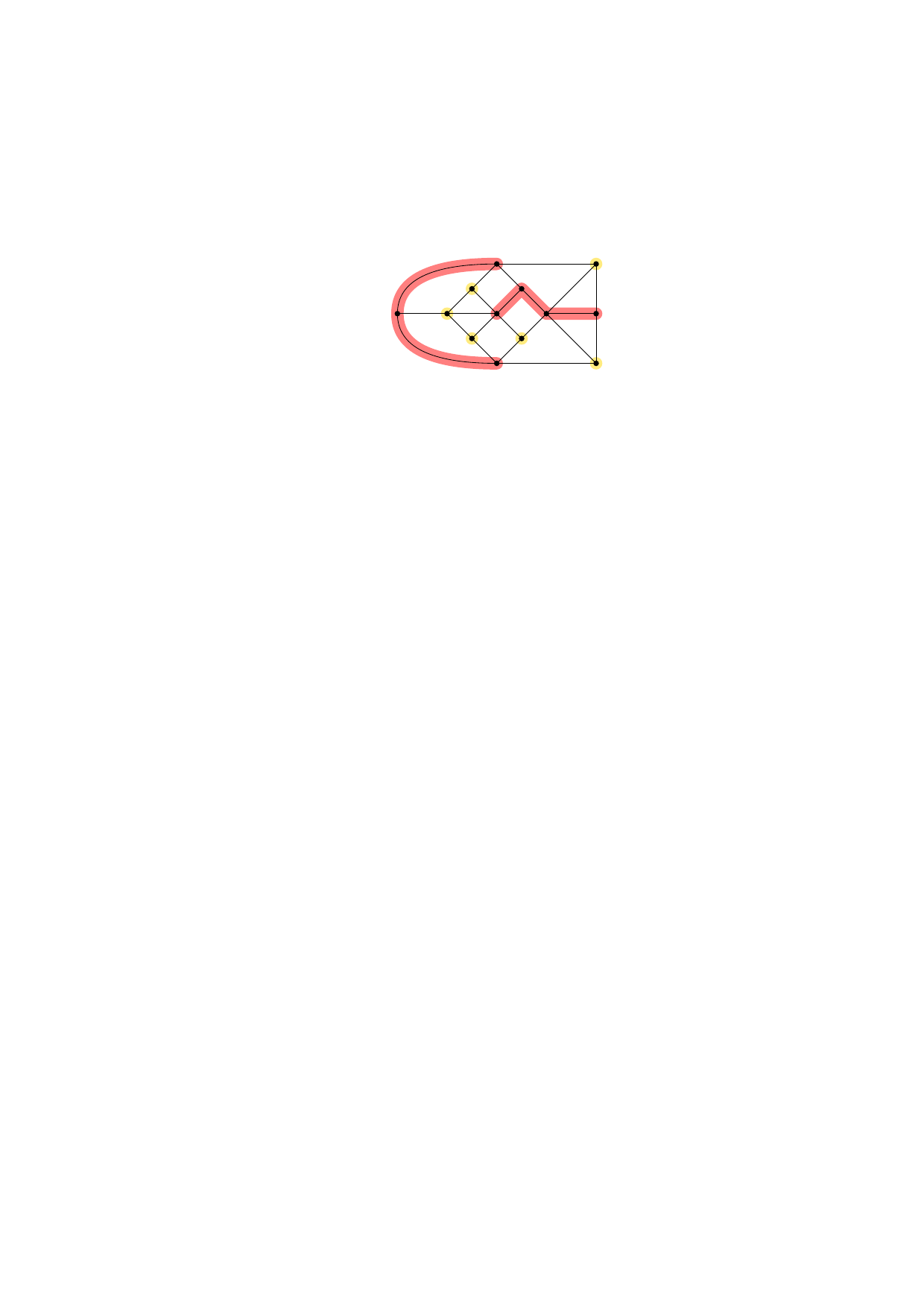}
	}
		\hfill
	{%
		\includegraphics[scale=1]{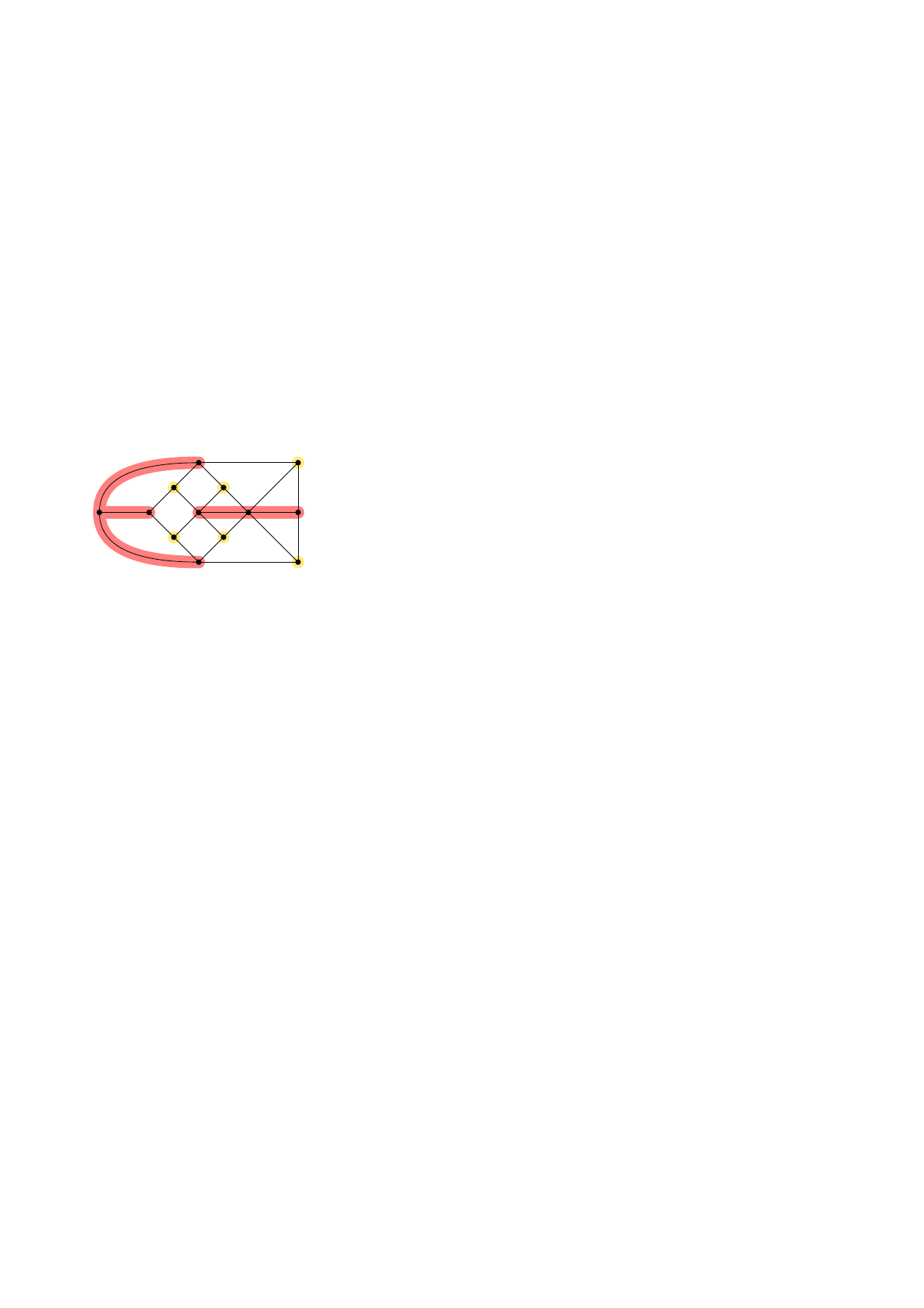}
	}
	
	\vspace{\floatsep}
	
	{%
		\includegraphics[scale=1]{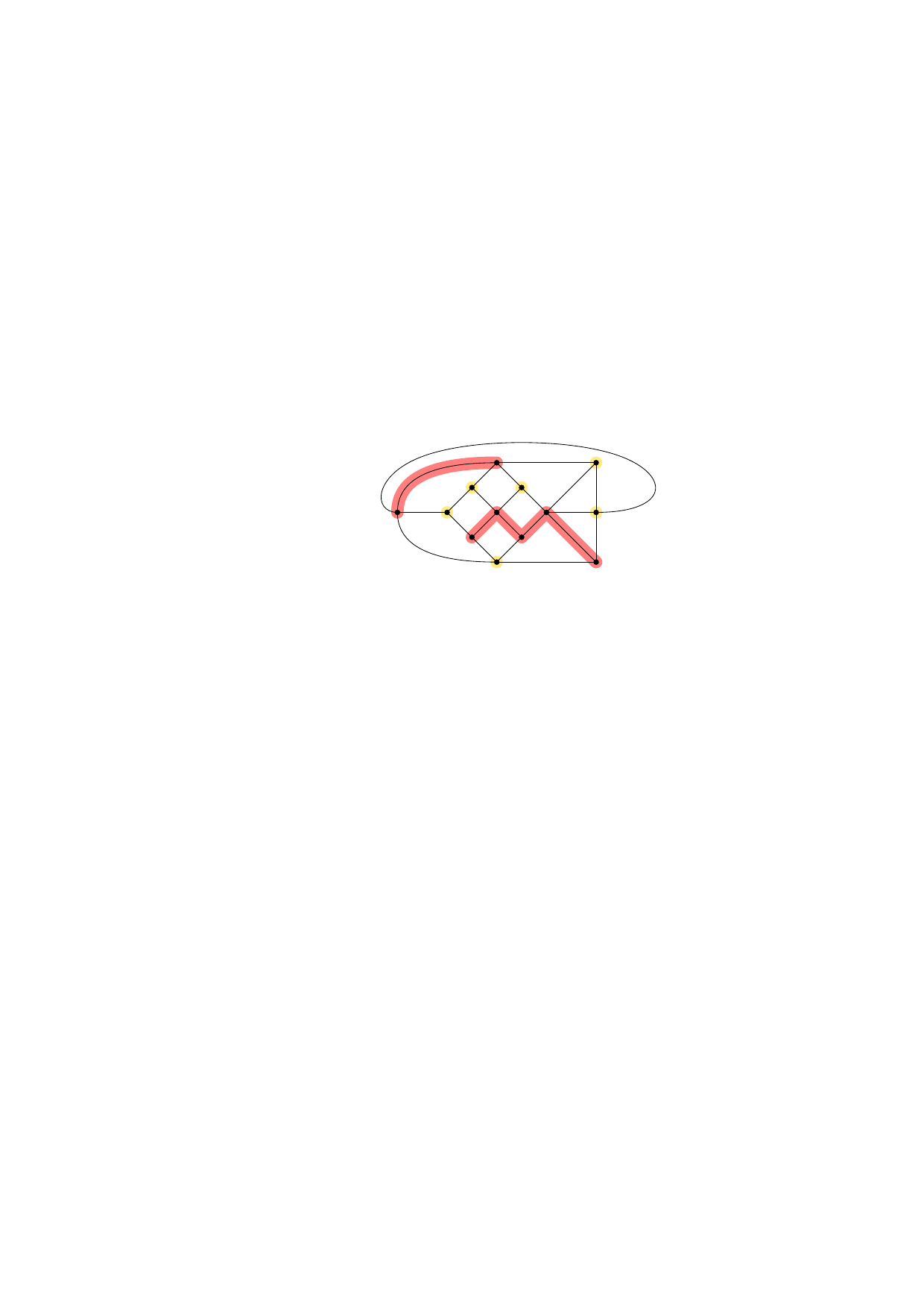}
	}
		\hspace{30pt}
	{%
		\includegraphics[scale=1]{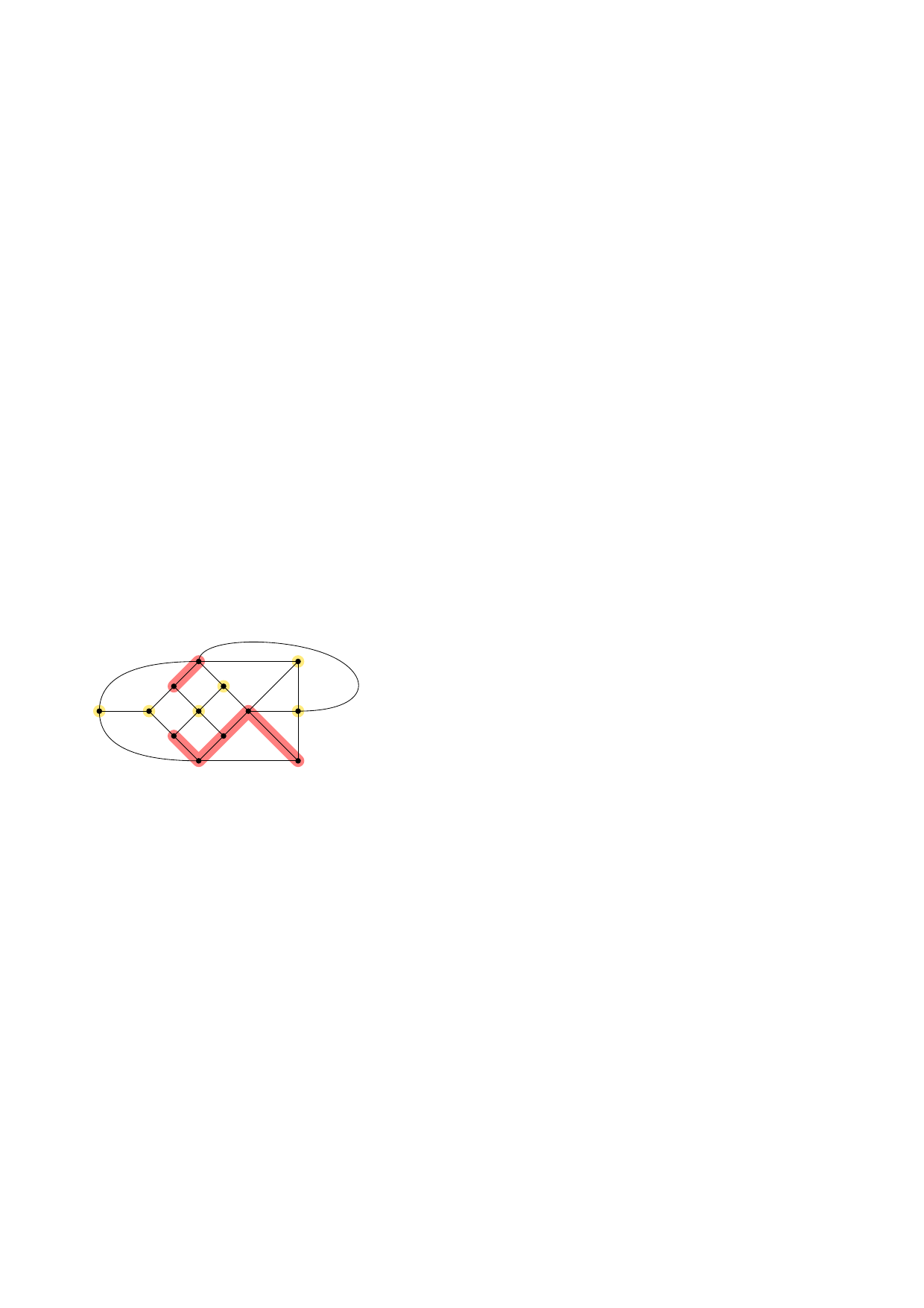}
	}
	\caption{Five graphs, each containing a $K_{2,6}$ minor.}
	\label{fig:f}
\end{figure}

	We omit the case where \( G \) contains \( \mathfrak{H}_n^\circ \) as a spanning subgraph and has an additional edge that is not one of the dashed edges, as the proof follows similarly.
	
	This completes the proof of Conjecture~\ref{con:K26}.

	We note that this characterization implies that for each \( n \ge 16 \), there are exactly 40 non-hamiltonian polyhedral graphs without \( K_{2,6} \) minors on \( n \) vertices. Moreover, as mentioned in \cite{O'Connell2018}, Gordon Royle’s computational results show that there are exactly 206 non-hamiltonian polyhedral graphs without \( K_{2,6} \) minors on fewer than 16 vertices.
	
	We remark that the Herschel family is, in fact, the class of \emph{edge-minimal} non-hamiltonian polyhedra that do not contain \( K_{2,6} \) minors.

	\section*{Acknowledgements}
	The research of On-Hei Solomon Lo was supported by the Fundamental Research Funds for the Central Universities. The research of Kenta Ozeki was supported by JSPS KAKENHI Grant Numbers 22F22331, 22K19773, 23K03195, and 26K00616.
	
	\section*{Declaration of interests}
	
	The authors declare that they have no known competing financial interests or personal relationships that could have appeared to influence the work reported in this paper.

	%\newpage
	
	\bibliographystyle{abbrv}
	\bibliography{paper}

\end{document}